\newcommand{\leqnomode}{\tagsleft@true\let\veqno\@@leqno}
\newcommand{\reqnomode}{\tagsleft@false\let\veqno\@@eqno}
\let\oldtocsection=\tocsection
\let\oldtocsubsection=\tocsubsection
\renewcommand{\tocsection}[2]{\hspace{0em}\oldtocsection{#1}{#2}}
\renewcommand{\tocsubsection}[2]{\hspace{1em}\oldtocsubsection{#1}{#2}}
\newtheorem{thm}{Theorem}[section]
\newtheorem{lemma}[thm]{Lemma}
\newtheorem{cor}[thm]{Corollary}
\newtheorem{prop}[thm]{Proposition}
\newtheorem{conj}[thm]{Conjecture}
\theoremstyle{remark}
\newtheorem{rem}[thm]{Remark}
\theoremstyle{remark}
\theoremstyle{definition}
\newtheorem{eg_no_qed}[thm]{Example}
\newenvironment{example}[1][]{\begin{eg_no_qed}[#1]\pushQED{\qed}}{\popQED\end{eg_no_qed}}
\newtheorem{THM}{Theorem}
\theoremstyle{definition}
\theoremstyle{definition}
\newtheorem{Question}{Question}
\theoremstyle{definition}
\newtheorem{definition}[thm]{Definition}
\numberwithin{equation}{section}
\newcommand\junk[1]{}
\newcommand{\C}{\mathbb{C}}           
\newcommand{\Z}{\mathbb{ Z}}           
\renewcommand{\ker}{\operatorname{ker }}
\newcommand{\diag}{\operatorname{diag}}
\newcommand{\ev}{\operatorname{ev}}
\newcommand{\rk}{\operatorname{rk}}
\newcommand{\fg}{{\mathfrak g}}
\newcommand{\cc}{\mathcal{C}}
\newcommand{\cd}{\mathcal{D}}
 \newcommand{\ci}{\mathcal{I}}
 \newcommand{\cj}{\mathcal{J}}
 \newcommand{\ck}{\mathcal{K}}
 \newcommand{\cl}{\mathcal{L}}
 \newcommand{\cm}{\mathcal{M}}
 \newcommand{\co}{\mathcal{O}}
 \newcommand{\cp}{\mathcal{P}}
 \newcommand{\ct}{\mathcal{T}}
 \newcommand{\cx}{\mathcal{X}}
\renewcommand{\tilde}{\widetilde}
\renewcommand{\bar}[1]{\overline{#1}}
\newcommand{\fX}{\mathcal{Y}}
\newcommand{\op}{\mathrm{op}}
\newcommand{\Spec}{\mathrm{Spec}}
\newcommand{\Br}{\mathrm{Br}}
\newcommand{\semi}{\mathsf{s}}
\newcommand{\nilp}{\mathsf{n}}
\newcommand{\mx}{\mathsf{x}}
\newcommand{\LT}{\mathrm{LT}}
\newcommand{\LM}{\mathrm{LM}}
\newcommand{\M}{\mathrm{M}}
\newcommand{\cjt}{\mathcal{J}^t} 
\newcommand{\cpt}{\mathcal{P}^t}
\begin{document}

\title[Matrix Hessenberg schemes over the minimal sheet]{Matrix Hessenberg schemes over the minimal sheet}

\author{Rebecca Goldin}
\address{Department of Mathematical Sciences\\ George Mason University\\ 4400 University Drive\\ Fairfax, VA\\ 22030\\ USA}
\email{rgoldin@gmu.edu }

\author{Martha Precup}
\address{Department of Mathematics\\ Washington University in St.~Louis \\ One Brookings Drive\\ St.~Louis, Missouri\\ 63130\\ USA  }
\email{martha.precup@wustl.edu}

\date{\today}

\begin{abstract} We study families of matrix Hessenberg schemes in the affine scheme of complex $n\times n$ matrices, each defined over a fixed sheet in the Lie algebra $\mathfrak{gl}_n(\C)$. It is well known that such families over the regular sheet are flat, and every regular Hessenberg scheme degenerates to a regular nilpotent Hessenberg scheme.  This paper explores whether flat degenerations exist outside of the regular case. 

For each matrix Hessenberg scheme, we introduce a one-parameter family of matrix Hessenberg schemes that degenerates it to a specific nilpotent Hessenberg scheme. Our main theorem states that, when the family lies over the minimal sheet in $\mathfrak{gl}_n(\C)$, this degeneration is flat. 
The proof leverages commutative algebra on the polynomial ring to identify the structure of the family concretely, and we explore several applications. We conjecture that flatness holds for these families over other sheets as well. 
\end{abstract}

\maketitle


\section{Introduction} 
Hessenberg varieties form large class of subvarieties of the flag variety that have arisen in diverse contexts across mathematical fields. 
Motivated by Hessenberg matrices and algorithms for efficiently calculating eigenvalues in numerical analysis, Hessenberg varieties in the flag variety of $GL_n(\C)$ were first introduced by De~Mari and Shayman \cite{DS1988} and later defined in all Lie types by De Mari, Procesi, and Shayman \cite{DPS1992}.  Many questions regarding their geometry and topology remain open, or are answered in only a few cases. 

Examples of Hessenberg varieties include many well-studied cases such as Springer fibers~\cite{Spaltenstein1976, Steinberg}, Peterson varieties~\cite{Kostant1996}, and permutohedral varieties~\cite{DPS1992}. Hessenberg varieties also overlap with other important subvarieties of the flag variety such as Schubert varieties~\cite{EPS2023, Tymoczko2006A, Abe-Crooks2016} and $K$ orbits~\cite{EPS-online}. Hessenberg varieties and their generalizations arise in the context of Schubert calculus\cite{Carlsson-Oblomkov2019, Drellich2015, Anderson-Tymoczko2010, Insko-Tymoczko-Woo}, representation theory\cite{GKM-affine, Oblomkov-Yun2016}, and combinatorics~\cite{Carlsson-Mellit2021, MR4116638}. The topology of regular semisimple Hessenberg varieties has been tied to the study of chromatic quasisymmetric functions via an action of the symmetric group on its cohomology~\cite{Shareshian-Wachs2016, Brosnan-Chow2018}, motivating new advances in the area; see, for example, \cite{Harada-Precup2019, Aberu-Nigro2022A, Aberu-Nigro2022B, MR4692364}.

Due to their geometric diversity, the theory of Hessenberg varieties has traditionally been advanced by a focus on special cases. This paper takes a different approach: we study families of Hessenberg varieties united only by a few essential properties of the defining parameters. One-parameter families of regular Hessenberg varieties were studied by Abe--Dedieu--Galetta--Harada~\cite{ADGH2018} and Abe--Fujita--Zeng~\cite{Abe-Fujita-Zeng2020}. Our work  generalizes that approach by organizing type A Hessenberg varieties into one-parameter families with the goal of understanding how the members of each family differ. In particular, we examine when these varieties vary continuously, that is, when the families we define are \emph{flat}.

Let $n$ be a positive integer, $G=GL_n(\C)$, and $B\subset G$ be the Borel subgroup of upper triangular matrices.  The collection of flags 
\[
V_\bullet:=(0\subset V_1\subset V_2\subset \cdots \subset V_{n-1}\subset V_n=\C^n)
\]
with $\dim V_i =i$ for all $i$ is the \emph{flag variety} of $G$.  Recall that this variety can be naturally identified with the homogeneous space $G/B$. A Hessenberg variety $Y_{\mx, h}$ is a subvariety of $G/B$ that depends on two inputs: a matrix $\mx\in \mathfrak{gl}_n(\C) = \mathrm{Lie}(G)$ and a non-decreasing function $h: \{1,\ldots, n\} \to \{1,\ldots, n\}$ such that $h(i)\geq i$, called a \emph{Hessenberg function}. Explicitly,
\begin{eqnarray}\label{eqn.hessdef}
Y_{\mx, h} := \{V_\bullet \mid \mx(V_i)\subseteq V_{h(i)} \text{ for all $i$} \}
\end{eqnarray}
is the \emph{Hessenberg variety} corresponding to the matrix $\mx$ and Hessenberg function $h$. If $\mx$ is a nilpotent (respectively, semisimple) element of $\mathfrak{gl}_n(\C)$ then we say that $Y_{\mx, h}$ is a \emph{nilpotent} (respectively, \emph{semisimple}) \emph{Hessenberg variety}.

The geometry of the Hessenberg variety $Y_{\mx, h}$ depends on the choice of matrix $\mx$ and Hessenberg function $h$. In one extreme case, when $h(i)=n$ for all $i$, the Hessenberg variety is equal to the flag variety of $G$ for all matrices $\mx$.  In another, when $h(i)=i$ for all $i$, the structure of $Y_{\mx, h}$ depends on the matrix $\mx$ (or its conjugacy class) a great deal. For example, if $\mx$ is nilpotent then $Y_{\mx, h}$ is the Springer fiber over $\mx$ and if $\mx$ is semisimple then $Y_{\mx, h}$ is a disjoint union of products of smaller dimensional flag varieties.

Let $\mx\in \mathfrak{gl}_n(\C)$ be a regular matrix, that is, a matrix such that the conjugacy class $\co_{\mx}$ of $\mx$ in $\mathfrak{gl}_n(\C)$ is maximum dimensional. When $\mx$ is regular, we say that $Y_{\mx, h}$ is a \textit{regular Hessenberg variety}. Assume further that $h(i)>i$ for all $1\leq i<n$.  We call such a Hessenberg function \textit{indecomposable}, because in this case, the regular Hessenberg variety $Y_{\mx, h}$ is not a disjoint union of products of smaller dimensional regular Hessenberg varieties. 
A regular Hessenberg variety with indecomposible Hessenberg function is reduced and irreducible, of dimension $\sum_i \left(h(i)-i\right)$~\cite{Abe-Fujita-Zeng2020, Precup2018}. Note that this dimension formula does not depend on the specific regular matrix $\mx$. The geometric reason for this ``dimension stability'' is that the family regular Hessenberg varieties is flat, as we now explain.

A scheme $\fX$ \emph{admits a flat degeneration to a scheme $\fX_0$} if there exists a flat morphism $\fX_t \to \Spec\ \!\C[t]$ of schemes such that the scheme-theoretic fiber over the closed point $t\neq 0$ is isomorphic to $\fX$ and the scheme-theoretic fiber over $0$ is $\fX_0$.  We we call $\fX$ the \emph{general fiber} and $\fX_0$ the \emph{special fiber} or \emph{degenerate fiber}. We also refer to the scheme $\fX_t$ as a \emph{flat family}.  Abe, Fujita, and Zeng proved that every regular Hessenberg variety is reduced and admits a flat degeneration to a regular nilpotent Hessenberg variety~\cite[Cor.~6.5]{Abe-Fujita-Zeng2020}. The goal of this paper is to explore whether such a degeneration exists outside of the regular case.   Outside of the regular case, one needs to disambiguate the \emph{scheme} defined by the equations describing the inclusions~\eqref{eqn.hessdef} 
from its underlying variety. To that end, we define the \emph{matrix Hessenberg scheme} as a lift $\fX_{\mx, h}$ of $Y_{\mx, h}$ in $\Spec\ \!\C[M_n]$, where $M_n$ denotes the set of $n\times n$ matrices with complex entries.

The driving question of this paper is:
\begin{Question}\label{quest1} Does every matrix Hessenberg scheme $\fX_{\mx, h}$ admit a flat degeneration to a nilpotent matrix Hessenberg scheme? 
\end{Question}

Our main result is that the answer is yes when $\mx$ in the \emph{minimal sheet} of $\mathfrak{gl}_n(\C)$. Recall that a \emph{partition of $n$} is a sequence $\lambda = (\lambda_1, \ldots ,\lambda_\ell)$ of non-decreasing positive integers whose sum is $n$.  There is a decomposition of $\mathfrak{gl}_n(\C) = \bigsqcup_{\lambda} \fg_\lambda$ into subsets called \emph{sheets}, each indexed by a partition $\lambda$ of $n$. The group $G$ acts on $\mathfrak{gl}_n(\C)$ by conjugation and each conjugacy class is assigned to a particular sheet using the Jordan canonical form; see Section~\ref{sec.sheets}. The minimal sheet is the union of conjugacy classes in $\mathfrak{gl}_n(\C)$ of minimum positive dimension and the partition indexing the minimal sheet is $(2,1, 1, \dots, 1)$, also written $(2, 1^{n-2})$. We call any Hessenberg variety $Y_{\mx, h}$ with $\mx\in \fg_{(2,1^{n-2})}$ a \emph{minimal Hessenberg variety} and similarly for the associated matrix Hessenberg scheme.  Our main result is the following; see also Theorem~\ref{thm.flat}.

\begin{THM}\label{thm.flatfamily} Let $h: \{1,\ldots, n\}\to \{1,\ldots, n\}$ be a Hessenberg function.  For all $\mx\in \fg_{(2,1^{n-2})}$ there exists a curve $\mx_t$ in the minimal sheet $\fg_{(2,1^{n-2})}$ and a flat morphism
\begin{eqnarray}\label{eqn.flatmorph}
\fX_{\mx_t, h} \to \Spec\ \!\C[t]
\end{eqnarray}
with general fiber $\fX_{\mx, h}$ and special fiber $\fX_{\nilp,h}$,
where $\nilp$ is an element of the minimal nilpotent orbit in $\mathfrak{gl}_n(\C)$. Furthermore, the flat family $\fX_{\mx_t, h}$ is reduced. 
\end{THM}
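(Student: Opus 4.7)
The plan is to exhibit an explicit one-parameter family obtained by scaling the semisimple part of $\mx$, then to verify flatness by a direct analysis of the defining ideal of the matrix Hessenberg scheme.

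First, I would construct the curve using the additive Jordan decomposition $\mx = \mx_s + \mx_n$, setting $\mx_t := t\mx_s + \mx_n$. Since a diagonalizable matrix has Jordan type $(1^n)$, the assumption $\mx \in \fg_{(2,1^{n-2})}$ forces $\mx_n$ to be a nonzero rank-one nilpotent, hence an element of the minimal nilpotent orbit. As $\mx_s$ and $\mx_n$ commute, block-decomposing with respect to the eigenspaces of $\mx_s$ shows that $\mx_t$ has Jordan type $(2,1^{n-2})$ for every $t \in \C^\times$, so the curve lies in $\fg_{(2,1^{n-2})}$ and specializes at $t=0$ to $\mx_n$ as required.

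Second, I would realize $\fX_{\mx_t, h}$ as the subscheme of $\Spec \C[t, M_n]$ cut out by the polynomial equations encoding the Hessenberg conditions $\mx_t (N e_i) \in \Span(N e_1, \dots, N e_{h(i)})$ for $1 \le i < n$. Written as minors of the matrices $[Ne_1 \mid \cdots \mid Ne_{h(i)} \mid \mx_t N e_i]$, the generators of the ideal $I \subset \C[t, M_n]$ are polynomials whose dependence on $t$ is linear and inherited from $\mx_t = t\mx_s + \mx_n$.

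Third, and this is the main step, I would establish flatness of $\C[t, M_n]/I$ over $\C[t]$ by describing the ideal $I$ concretely enough to produce a $\C[t]$-module basis. Because $\C[t]$ is a PID, it suffices to check $t$-torsion freeness. The natural route is to choose a monomial order on $\C[M_n]$ and show that the initial ideal of the specialization at $t = 0$ agrees with the initial ideal of the generic fiber — equivalently, that the generators of $I$, viewed as polynomials in $\C[M_n][t]$, have leading terms forming a Gröbner basis whose structure does not depend on $t$. The key structural input is that for any $\mx \in \fg_{(2,1^{n-2})}$ the operator $\ad(\mx)$ has a very constrained form, which allows for an explicit decomposition of the generators into pieces whose leading behavior is controlled by $\mx_n$ alone. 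Once initial ideals are matched, a standard deformation argument identifies a $\C[t]$-basis for the coordinate ring and yields flatness. This also pins down the dimension of the special fiber as matching the general one.

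Fourth, reducedness of the flat family would follow from reducedness of a single fiber, together with the fact that flat families over smooth one-dimensional bases have reduced total space whenever a fiber is reduced and the generic fiber is reduced of the expected dimension. I would argue reducedness of the special fiber $\fX_{\nilp, h}$ directly from the explicit Gröbner description above, or alternatively verify that the defining ideal equals its own radical at $t = 0$ by examining the monomial pattern of the leading terms.

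The main obstacle is the flatness step: controlling the ideal $I$ finely enough to see that no $t$-torsion is introduced when we degenerate. This is precisely where minimality enters — the rank-one nilpotent structure of $\mx_n$ means the extra relations that could in principle appear at $t = 0$ are accounted for by relations already present for $t \ne 0$. In higher sheets, where the nilpotent part has larger rank, the analogous bookkeeping becomes substantially more intricate, which is consistent with the paper framing the general case as a conjecture.
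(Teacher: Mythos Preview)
Your curve construction has a fundamental error. You set $\mx_t = t\mx_s + \mx_n$ with $\mx_s,\mx_n$ the Jordan semisimple and nilpotent parts, claiming that $\mx\in\fg_{(2,1^{n-2})}$ forces $\mx_n\neq 0$ because ``a diagonalizable matrix has Jordan type $(1^n)$.'' But the sheet partition $\lambda_\mx$ is \emph{not} the Jordan type of $\mx_n$: for $\semi=\diag(1,0,\ldots,0)$ one has $\lambda_\semi=(2,1^{n-2})$, so $\semi$ lies in the minimal sheet, yet $\semi$ is semisimple and $\semi_n=0$. Your curve is then $t\semi$, whose special fiber is $\fX_{0,h}=\Spec\,\C[\mathbf z]$ rather than a minimal nilpotent Hessenberg scheme. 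The paper's sheet line (Definition~\ref{def.sheetline}) uses instead a nilpotent $\nilp_\mx$ built from a tableau so as to lie in the \emph{same sheet} as $\mx$; for $\semi$ this gives $\nilp_\semi=E_{1n}$ and $\semi_t=t\semi+E_{1n}$, which is not a Jordan decomposition of anything.

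Your reducedness step also fails. You propose to deduce reducedness of the total space from reducedness of the special fiber $\fX_{\nilp,h}$, but the paper proves (Theorem~\ref{thm4}, Example~\ref{ex.1234}) that $\fX_{\nilp,h}$ is \emph{not} reduced whenever $h$ is decomposable; for $h=(1,2,\ldots,n)$ one has $z_{n1}^2\in\ci_{\nilp,h}$ while $z_{n1}\notin\ci_{\nilp,h}$. The paper's argument runs in the opposite direction: it shows directly (Theorem~\ref{thm.assprimes}) that the ideal of the total family decomposes as $\ci_{\semi_t,h}=\bigcap_{i\in\cc(h)}\cpt_{i,h}$ with each $\cpt_{i,h}=\cjt_{i-1}+\ck_{h(i)}$ prime. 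This yields reducedness of the family immediately, and since no $t-a$ lies in any $\cpt_{i,h}$, torsion-freeness over $\C[t]$ and hence flatness. The Gr\"obner-basis work you anticipate is present, but it is used to establish $t$-saturation and primality of the $\cpt_{i,h}$ (Section~\ref{sec.technical}), not to match initial ideals of fibers.
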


More broadly, we provide the tools to ask Question~\ref{quest1} for other sheets, by defining a curve $\mx_t$ in the sheet $\fg_\lambda$, and a corresponding family of Hessenberg schemes over $\mx_t$, for any partition $\lambda$. In Conjecture~\ref{conj}, we conjecture that this family is flat over every sheet. 
In the case of the minimal sheet, the proof of Theorem~\ref{thm.flatfamily} follows from an explicit description of the associated primes for the ideal defining $\fX_{\mx_t, h}$, stated in Theorem~\ref{thm.assprimes}.

The minimal sheet has two particularly convenient elements, mainly the nilpotent matrix $\nilp:=E_{1n}$, the matrix with unique nonzero entry equal to $1$ in position $(1,n)$, and semisimple matrix $\semi=\diag(1,0,\ldots, 0)$.  
These choices are representative in that each matrix Hessenberg scheme is isomorphic to either $\fX_{\semi, h}$ or $\fX_{\nilp, h}$ by Lemma~\ref{lem:equality}.
The general fiber $\fX_{\semi, h}$ is studied in~\cite{GP-semisimple}, where we prove that this matrix Hessenberg scheme is reduced and we identify each irreducible component explicitly. 

Though the  family $\fX_{\mx_t, h}$ and its general fiber $\fX_{\semi, h}$ are reduced, the special fiber $\fX_{\nilp, h}$ need not be.  Since $\nilp$ is $B$-invariant,  the Hessenberg variety $Y_{\nilp, h}$ is a union of Schubert varieties. Tymoczko identified the particular union when $h$ is indecomposable \cite{Tymoczko2006A}, and  Abe and Crooks extended the result to include the case that $h$ is decomposable \cite{Abe-Crooks2016}. In Proposition~\ref{prop.nilp.components}, we prove a scheme theoretic version of Tymoczko's decomposition for the indecomposable clase. We then connect it with our results from~\cite{GP-semisimple} for the semisimple Hessenberg varieties over the minimal sheet, via a flat degeneration defined on each component; see Theorem~\ref{thm.components.indecomp}. 

\begin{THM}\label{thm.bijectivedegeneration} Suppose $h:\{1,\ldots, n\}\to \{1,\ldots, n\}$ is an indecomposable Hessenberg function.  The flat degeneration described in Theorem~\ref{thm.flatfamily} yields a flat degeneration from each component of the semisimple matrix Hessenberg scheme $\fX_{\semi, h}$ to a unique component of the nilpotent matrix Hessenberg scheme~$\fX_{\nilp, h}$. In particular, there is a bijection between components of each minimal semisimple Hessenberg variety and minimal nilpotent Hessenberg variety when $h$ is indecomposable.
\end{THM}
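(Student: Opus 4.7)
The plan is to read the bijection off the structure of the flat family $\fX_{\mx_t,h}\to \Spec \C[t]$ provided by Theorem~\ref{thm.flatfamily}. Because the total space is both reduced and flat over $\Spec \C[t]$, multiplication by $t$ is injective on every local ring; hence no irreducible component of $\fX_{\mx_t,h}$ is contained in the fiber over $t=0$. (Equivalently, Theorem~\ref{thm.assprimes} shows that $t$ lies in no minimal prime of the defining ideal.) Each irreducible component $\fX^{(i)}$ of $\fX_{\mx_t,h}$ therefore dominates $\Spec \C[t]$ and is itself flat, with general fiber $\fX^{(i)}|_{t\neq 0}$ an irreducible closed subscheme of $\fX_{\semi,h}$ and special fiber $\fX^{(i)}|_{t=0}$ a closed subscheme of $\fX_{\nilp,h}$. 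Invoking the reducedness of $\fX_{\semi,h}$ and the component classification of~\cite{GP-semisimple}, the rule $\fX^{(i)}\mapsto \fX^{(i)}|_{t\neq 0}$ is a bijection between irreducible components of the total family and those of $\fX_{\semi,h}$; composing with restriction to $t=0$ yields a well-defined map $\Phi$ from components of $\fX_{\semi,h}$ to closed subschemes of $\fX_{\nilp,h}$.

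Next I would show that each value $\Phi(C)$ is in fact a single irreducible component of $\fX_{\nilp,h}$. Using the explicit associated primes of Theorem~\ref{thm.assprimes}, one writes down generators for the prime ideal cutting out $\fX^{(i)}$ and specializes at $t=0$ to obtain defining equations for $\fX^{(i)}|_{t=0}$. Either a direct ideal computation or a Hilbert-polynomial argument (flatness forcing equal Hilbert polynomials on the two fibers) shows that the specialized ideal has dimension $\sum_i (h(i)-i)$. By Proposition~\ref{prop.nilp.components} together with the Tymoczko--Abe--Crooks description~\cite{Tymoczko2006A, Abe-Crooks2016} of $\fX_{\nilp,h}$ as a union of equidimensional Schubert varieties of that same dimension, the support of $\Phi(C)$ must then coincide with a full irreducible component of $\fX_{\nilp,h}$, and a further scheme-theoretic check (if needed) identifies $\Phi(C)$ with the corresponding reduced Schubert variety.

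With $\Phi$ now known to take values in components, the remaining task is to prove it is a bijection. Surjectivity is immediate from the set-theoretic identity $|\fX_{\nilp,h}|=\bigcup_i |\fX^{(i)}|_{t=0}|$ together with the equidimensionality above: each irreducible component of $\fX_{\nilp,h}$ must be the image under $\Phi$ of at least one component of $\fX_{\semi,h}$. The main obstacle will be injectivity, because two distinct horizontal components of $\fX_{\mx_t,h}$ could a priori degenerate to the same Schubert variety in $\fX_{\nilp,h}$. My plan here is to match combinatorial indexing sets: the components of $\fX_{\semi,h}$ are parametrized by the data of~\cite{GP-semisimple}, while the components of $\fX_{\nilp,h}$ are parametrized by the Schubert data of~\cite{Tymoczko2006A, Abe-Crooks2016}, and one checks by comparing defining ideals that $\Phi$ is compatible with a natural bijection between these two sets. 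Equivalently, once both sets are known to have the same finite cardinality, surjectivity of $\Phi$ forces injectivity, yielding the asserted bijection.
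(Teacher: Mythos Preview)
Your overall strategy---tracking irreducible components of the total family through specialization---is sound, and the first paragraph is essentially correct. However, the second paragraph contains a genuine error: you assert that $\fX_{\nilp,h}$ is a union of \emph{equidimensional} Schubert varieties of dimension $\sum_i(h(i)-i)$, and use this to conclude that the support of $\Phi(C)$ coincides with a full component. This is false. Example~\ref{ex.2444.2} exhibits an indecomposable Hessenberg function $h=(2,4,4,4)$ for which $\fX_{\nilp,h}$ has components of dimensions $14$ and $15$; the schemes $\fX_{\semi,h}$ and $\fX_{\nilp,h}$ are not equidimensional in general (see also Remark~\ref{rem.notequidim}). Even replacing your claim with the correct statement that flatness forces $\dim\Phi(C)=\dim C$, you would still need to show that $\Phi(C)$ is irreducible---the special fiber of an irreducible flat family need not be---and to identify which component it is, since distinct components of $\fX_{\nilp,h}$ can have the same dimension.

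The paper's proof (Theorem~\ref{thm.components.indecomp}) avoids all of this by working directly with the explicit primes of Theorem~\ref{thm.assprimes}. Each $\cpt_{i,h}=\cjt_{i-1}+\ck_{h(i)}$ is prime (Theorem~\ref{thm.prime}) and does not contain $t-a$, so $\Spec\,\C[t,\mathbf{z}]/\cpt_{i,h}\to\Spec\,\C[t]$ is flat by the torsion criterion. Evaluating the generators at $t=0$ yields $\cj_{i-1}^0+\ck_{h(i)}$, which Lemma~\ref{lem.Schubert} and Proposition~\ref{prop.nilp.components} identify as the prime ideal of the matrix Schubert variety $\cx_{w_0w[i,h(i)]}$, the component of $\fX_{\nilp,h}$ indexed by the corner $i$. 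Both fibers are thus computed explicitly and matched to the components indexed by the same corner, with no abstract dimension or irreducibility argument needed. Your ``direct ideal computation'' option, carried out honestly, leads exactly here; the Hilbert-polynomial route alone does not suffice.
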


A component-wise degeneration does not generally exist outside the indecomposable case. In Example~\ref{ex.1234}, we exhibit a (flat) degeneration in which the special fiber consists of three Schubert varieties with nontrivial multiplicity, while the general fiber consists of four reduced components. We also prove $\fX_{\nilp, h}$ is not reduced when $h$ is decomposable; see Theorem~\ref{thm:decomposable}. 

\begin{THM}\label{thm4} The minimal nilpotent matrix Hessenberg scheme $\fX_{\nilp, h}$ is reduced if and only if the Hessenberg function $h$ is indecomposable.
\end{THM}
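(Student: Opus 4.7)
The plan is to prove the biconditional by establishing each implication separately, invoking the two results stated immediately before the theorem.

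For the ``if'' direction, suppose $h$ is indecomposable. Proposition~\ref{prop.nilp.components} provides a scheme-theoretic decomposition of $\fX_{\nilp,h}$ as a union of (lifts of) Schubert schemes inside $\Spec\C[M_n]$. Schubert schemes are reduced, so their defining ideals are radical; and the ideal of a scheme-theoretic union is the intersection of the ideals of the pieces, which remains radical. Therefore $\fX_{\nilp,h}$ is reduced.

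For the ``only if'' direction, I argue by contrapositive: if $h$ is decomposable then $\fX_{\nilp,h}$ is not reduced, which is the content of Theorem~\ref{thm:decomposable}.

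The substantive work is therefore concentrated in Theorem~\ref{thm:decomposable}, and this is the main obstacle. My approach would be to pick the smallest index $k<n$ with $h(k)=k$ and localize at a base flag $V_\bullet^0$ where non-reduced structure surfaces. The toy model is $n=2$ with $h=(1,2)$: writing $V_1=\Span(e_1+te_2)$ in an affine chart, the Hessenberg condition $\nilp(V_1)\subseteq V_1$ becomes $t^2=0$, so locally $\fX_{\nilp,h}\cong\Spec\C[t]/(t^2)$. For general decomposable $h$, I would choose $V_\bullet^0$ so that $V_k^0$ is ``just barely'' $\nilp$-invariant --- concretely, meeting both $\operatorname{im}(\nilp)$ and $\operatorname{ker}(\nilp)$ in a prescribed way --- introduce coordinates compatible with this block structure, and isolate a single quadratic relation that reproduces the $t^2=0$ pattern. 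The technical heart is verifying that this quadratic element is not contained in the ideal generated by the remaining linear Hessenberg conditions; this should follow from an explicit primary decomposition of the defining ideal along the lines of Theorem~\ref{thm.assprimes}.
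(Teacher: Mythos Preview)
Your ``if'' direction is correct and matches the paper: Proposition~\ref{prop.nilp.components} writes $\ci_{\nilp,h}$ as an intersection of Schubert determinantal ideals, hence radical.

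For the ``only if'' direction, note first that Theorem~\ref{thm:decomposable} is the restatement of Theorem~\ref{thm4} in the body; it is not a separate result you can invoke, so your sketch is the actual proof attempt. Your strategy---pass to an affine chart on the flag variety and isolate a $t^2=0$---is genuinely different from the paper's. The paper works globally in $\C[\mathbf{z}]$: taking $i_0$ minimal with $h(i_0)=i_0$ (your $k$) and any $B_0\subseteq\{2,\dots,n\}$ of size $i_0$ with $n\in B_0$, one has $p_{B_0}\in\cj^0_{i_0}$ and $p_{B_0}\in\ck_{i_0}$, so $p_{B_0}^2\in\cj^0_{i_0}\cdot\ck_{i_0}\subseteq\ci_{\nilp,h}$. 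Showing $p_{B_0}\notin\ci_{\nilp,h}$ is then a degree argument: all summands $\cj^0_{k^*}\cdot\ck_{h(k)}$ with $k\geq (i_0)_*$ live in degree $>i_0$, while the summands with $k<(i_0)_*$ have $k^*<h(k)$ (by minimality of $i_0$), so Lemma~\ref{lem.ev0.rad} applies and a Laplace expansion shows $p_{B_0}$ cannot lie in $\cj^0_{m^*}$ for the relevant $m$.

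The gap in your sketch is the verification step. You propose to show your quadratic element is nonzero in the quotient via ``an explicit primary decomposition of the defining ideal along the lines of Theorem~\ref{thm.assprimes}.'' But Theorem~\ref{thm.assprimes} concerns the family over $\C[t]$; its proof hinges on Theorem~\ref{thm.radical.prod}, whose analogue at $t=0$ is precisely Lemma~\ref{lem.ev0.rad}, and that lemma requires $i<j$. When $h$ is decomposable you have $i^*=h(i)$ at the decomposition point, so the product $\cj^0_{i^*}\cdot\ck_{h(i)}$ is \emph{not} radical and the argument breaks---indeed, that failure is exactly what makes $\fX_{\nilp,h}$ non-reduced. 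Examples~\ref{ex.1234} and~\ref{ex.2244} show that the primary decomposition of $\ci_{\nilp,h}$ for decomposable $h$ can involve embedded primes and nontrivial multiplicities, so it is not available as a tool here without essentially proving the theorem first. The paper's degree-and-Laplace argument sidesteps this entirely. A secondary point: your computation lives in $G/B$ while the statement concerns $\fX_{\nilp,h}\subseteq\Spec\C[\mathbf{z}]$; non-reducedness does transfer along the smooth map $\pi$, but this should be said.
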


The paper is organized as follows. Section~\ref{sec.background} reviews the relevant background on matrix Schubert varieties and matrix Hessenberg schemes. In section~\ref{sec.sheets} we review sheets of the Lie algebra $\mathfrak{gl}_n(\C)$, construct the curve $\mx_t$, and define a family of Hessenberg schemes over the curve. In Section~\ref{sec.flat}, we recall the definition of a flat family, and describe two families known to be flat thanks to a phenomenon known as ``miracle flatness"; we elaborate in detail on how this technique applies in a restricted setting in Appendix~\ref{sec.miracleflatnessproofs}. Miracle flatness does not apply in the minimal sheet -- this failure suggests the need for other methods. One enormous benefit of lifting the Hessenberg variety to the matrix space that the
geometry of the Hessenberg variety can be studied using commutative algebra in the
relatively simple setting of the polynomial ring in $n^2$ variables.
Section~\ref{sec.technical} contains the technical heart of the paper, in which we examine the defining ideal of a family of matrix Hessenberg schemes in detail using Gr\"obner bases.  In Section~\ref{sec.FlatMinSheet} we use these technical results to examine the family of Hessenberg schemes over the minimal sheet in detail, proving the main theorem. The implications of these results for the special (nilpotent) fiber can be found in Section~\ref{sec.nilpotent}.

\subsection{Acknowledgments} 
The first author was supported by National Science Foundation grant \#2152312 and the second author was supported by National Science Foundation grants \#1954001 and \#2237057. We also thank Rahul Singh and Allen Knutson for several helpful conversations.


\section{Relevant matrix schemes}\label{sec.background}

Let $\mathbf{z}$ denote the set of $n^2$ variables $\{z_{ij}\mid 1\leq i,j \leq n\}$ and $\C[\mathbf{z}] = \C[z_{11}, z_{12}, \ldots, z_{nn}]$. Throughout this paper, we identify $z_{ij}$ with the coordinate function on the scheme of $n\times n$ matrices $M_n(\C)=\Spec\ \!\C[\mathbf{z}]$ returning the $(i,j)$ entry.  Let $G=GL_n(\C)$ denote the algebraic group of $n\times n$ complex invertible matrices.  As is typical, we identify $G$ as a principal open subset in $M_n(\C)$.

As in the introduction, we denote by $B\subset G$ the Borel subgroup of upper-triangular matrices.
Let $\pi: G\to G/B$ be the natural projection.  The morphism $\pi$ defines correspondences between (right) $B$-invariant subschemes of $G$, their closures in $M_n(\C)$, and subschemes of $G/B$. This section introduces two key instances of these correspondences: matrix Schubert varieties and matrix Hessenberg schemes.

For $n$ a positive integer, let $[n]:=\{1,2,\ldots, n\}$ and $S_n$ be the symmetric group on $n$ letters.  We typically denote a permutation $w\in S_n$ using its one-line notation, i.e.  $w=[w(1), w(2), \dots, w(n)]$. 
We omit the commas when $n<10$, so that, for example, $[6,5,1,2,4,3] = [651243]$. The \emph{length} of a permutation $w\in S_n$ is $\ell(w):= |\{(i,j) \mid 1\leq i<j\leq n, w(i)>w(j)\}|$. The longest permutation in $S_n$ is denoted by $w_0:=[n,n-1,\ldots, 2, 1]$.


\subsection{Matrix Schubert varieties}\label{sec:matrixSchubs}  
The Weyl group of $G$ may be identified with $S_n$. For each permutation $w$, let $\dot w\in G$ denote the permutation matrix whose entry is $1$ in row $i$ and column $j$ if $w(j)=i$, and $0$ otherwise.  There are finitely many left $B$ or $B_-:= \dot w_0 B \dot w_0$ orbits on $G/B$, in either case indexed by $S_n$. We call the closures of $B$ orbits on $G/B$ \emph{Schubert varieties,} and the closure of $B_-$ orbits on $G/B$ \emph{opposite Schubert varieties}, denoted respectively by
$$
X_w= \overline{B\dot wB/B} \; \text{ and } \;  X^\op_w= \overline{B_- \dot wB/B}.
$$ 
These varieties are known to be reduced and irreducible.

Given $w\in S_n$, we define the associated \emph{matrix Schubert variety} by $\cx_w:= \overline{B\dot wB}$ in $M_n(\C)=\Spec\ \!\C[\mathbf{z}]$.  Similarly, we define the \emph{opposite matrix Schubert variety} by $\cx_w^{\op}:= \overline{B_-\dot wB}$.\footnote{These naming conventions are different than those of \cite{Knutson-Miller2005} and \cite{MS05}, who reserve the term {\em matrix Schubert variety} to indicate the $B_-\times B$-orbit closures.} These  satisfy the relationship
\begin{equation}\label{eq:oppositetoregularSchubert}
\dot w_0 \cx_w^{\op}  = \cx_{w_0w}.
\end{equation}
Matrix Schubert varieties are characterized by the following defining ideals. 
For each pair $p,q$ with $1\leq p,q\leq n$, let $r_{pq}(w)$ be the rank of the northwest $p\times q$ submatrix of $\dot w$. One may check that
\[
r_{pq}(w) = \left| \{w(1), \ldots, w(q)\} \cap \{1, \ldots, p\} \right|.
\]
Let $Z = (z_{ij})_{1\leq i, j \leq n}$ be a matrix with variable entries and $Z_{p\times q}$ denote the submatrix given by northwest $p\times q$ submatrix of $Z$. The \emph{Schubert determinantal ideal $\ci_w$ corresponding to the permutation $w$} is the ideal in $\C[\mathbf{z}]$ generated by minors in $Z_{p\times q}$ of size $1+r_{pq}(w)$, over all  $p,q$.\footnote{Some authors associate to $w\in S_n$ the variety $\overline{B_- \dot{w}^TB}$ and ideal $\ci_{w^{-1}}$, which they denote $I_w$.}

The following is a well-known characterization of matrix Schubert varieties and determinantal ideals.

\begin{prop} Let $w\in S_n$ and let $\ci_w$ denote its Schubert determinantal ideal. 
Then
\begin{enumerate}
\item $\cx_w^{\op}$ is the reduced subscheme $\Spec\ \!\C[\mathbf{z}]/\ci_w$; in particular, $\ci_w$ is prime, and
\item $\cx_w^{\op} \cap G = \pi^{-1}(X^{\op}_w).$ 
\end{enumerate}
\end{prop}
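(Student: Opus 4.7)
The plan is to prove part (1) in two stages: first establish the set-theoretic equality of $\cx_w^{\op}$ and $V(\ci_w)$, then upgrade to scheme-theoretic equality by showing $\ci_w$ is prime. Part (2) then follows formally from the $B$-bundle structure of $\pi: G \to G/B$. For the containment $\cx_w^{\op} \subseteq V(\ci_w)$, I would observe that the rank of the northwest $p \times q$ submatrix is invariant under the $B_- \times B$-action: left multiplication by $b_- \in B_-$ sends rows $1, \ldots, p$ of $M$ to linear combinations of rows $1, \ldots, p$ of $M$ via the invertible top-left $p \times p$ block of $b_-$, and symmetrically for right multiplication by $b \in B$. Thus the rank of the northwest $p \times q$ submatrix is constant on $B_- \dot w B$ with value $r_{pq}(w)$; by semicontinuity of rank, it is bounded above by $r_{pq}(w)$ on the closure $\cx_w^{\op}$, so all $(1+r_{pq}(w))$-minors vanish on $\cx_w^{\op}$.

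For the reverse set-theoretic containment, I would use the Bruhat decomposition $G = \bigsqcup_{v \in S_n} B_- \dot v B$ together with the standard fact that the double coset containing an invertible matrix $M$ is determined by the rank function $(p,q) \mapsto r_{pq}(M)$. Any $M \in V(\ci_w) \cap G$ thus lies in some cell $B_- \dot v B$ with $r_{pq}(v) \leq r_{pq}(w)$ for all $p,q$, which is exactly the Bruhat condition ensuring $B_- \dot v B \subseteq \cx_w^{\op}$. The main obstacle is then to show simultaneously that $V(\ci_w) \cap G$ is dense in $V(\ci_w)$ and that $\ci_w$ is prime, and not merely that its radical equals the vanishing ideal. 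The cleanest approach, due to Fulton and refined by Knutson–Miller, is to exhibit an antidiagonal term order under which the initial ideal of $\ci_w$ is the Stanley–Reisner ideal of a shellable subword complex; squarefreeness of this initial ideal transfers radicality back to $\ci_w$, and irreducibility of $\cx_w^{\op}$ as a single $B_- \times B$-orbit closure upgrades radicality to primality. Alternatively, one could simply cite Fulton's theorem as a black box.

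Finally, for part (2), the projection $\pi: G \to G/B$ is Zariski-locally a trivial $B$-bundle, hence flat with smooth fibers. Therefore $\pi^{-1}(X_w^{\op})$ is the scheme-theoretic closure in $G$ of $\pi^{-1}(B_- \dot w B/B) = B_- \dot w B$. Since $G$ is open in $M_n(\C)$ and $B_- \dot w B$ lies in $G$, this closure in $G$ coincides with $\cx_w^{\op} \cap G$, yielding $\pi^{-1}(X_w^{\op}) = \cx_w^{\op} \cap G$ as subschemes of $G$.
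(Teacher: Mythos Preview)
The paper does not give a proof of this proposition; it is stated as ``a well-known characterization of matrix Schubert varieties and determinantal ideals,'' with the result due to Fulton (and the Gr\"obner refinement to Knutson--Miller, cited elsewhere in the paper). Your outline therefore goes well beyond what the paper does, and your fallback of citing Fulton's theorem as a black box is exactly the paper's approach.

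Your sketch is broadly correct and you identify the genuine difficulty, but there is a small circularity in the final step. You correctly note that the ``main obstacle'' is to show both that $V(\ci_w)\cap G$ is dense in $V(\ci_w)$ and that $\ci_w$ is prime. Your proposed resolution, however, only delivers radicality (via the squarefree initial ideal) and then invokes irreducibility of $\cx_w^{\op}$ to upgrade to primality. That upgrade requires $V(\ci_w)=\cx_w^{\op}$ set-theoretically, whereas your Bruhat argument only gives $V(\ci_w)\cap G=\cx_w^{\op}\cap G$; a priori $V(\ci_w)$ could acquire extra components inside the determinant hypersurface $\{d=0\}$, and nothing you have said rules this out. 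Closing this gap is precisely where the deeper input lives: either Fulton's straightening argument, or the full Knutson--Miller analysis (the subword complex is a shellable ball, giving Cohen--Macaulayness and equidimensionality of the correct dimension, which together with a bit more work excludes spurious components). So your outline is sound provided you acknowledge that the last step is a citation rather than an argument.
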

Using~\eqref{eq:oppositetoregularSchubert} and statement (1) above, we get that 
\begin{eqnarray}\label{eqn.Schubertscheme}
\cx_{w_0w}=\Spec\ \!\C[\mathbf{z}]/  w_0\cdot \ci_w
\end{eqnarray}
where $w_0\cdot \ci_w$ is the image of the Schubert determinantal ideal corresponding to $w$ under the action of $w_0$ on $\C[\mathbf{z}]$ defined by $w_0\cdot z_{ij} = z_{w_0(i)j}$.

Inclusions of matrix Schubert varieties define a partial order $\leq_{\Br}$ on the set of permutations known as \emph{Bruhat order}: $v\leq_\Br w$ if and only if $\cx_v \subseteq \cx_w$.  

Two families of Schubert ideals will play a large role in this paper. Define
\begin{equation}\label{eq.SchubertIdeals}
    \begin{aligned}
        \cj_i^0 &:= \left< z_{n1}, z_{n2}, \ldots, z_{ni} \right>\\
        \ck_i &:= \left<p_B \mid B\subseteq \{2, \ldots n\}, |B|=i \right>
    \end{aligned}
\end{equation}
for $i\in [n]$, where $p_B$ is the determinant of the submatrix $(z_{ k\ell})_{k\in B, 1\leq \ell\leq i}$. Observe that $\ck_n=\langle 0\rangle$. By convention, we assign $\cj_0^0 := \left< 0 \right>$ and $\ck_0=\C[\mathbf{z}]$.

For each $k,\ell\in [n], k\neq\ell$, let
\begin{itemize}
\item $u[k]$ be the shortest permutation $u$ in $S_n$ such that $u(k)=1$ and
\item $v[k]$ be the shortest permutation $v$ in $S_n$ such that $v(k)=n$.
\end{itemize}
For example, with $n=5$, $u[2] = [2 1 3 4 5]$ and $v[3] = [1 2 5 3 4]$. Note that $u[1] = v[n] = e$ is the identity.

The following lemmas are straightforward to verify using the definition of a matrix Schubert variety and the observation of Equation~\eqref{eqn.Schubertscheme}. 
\begin{lemma}\label{lem.Schubert} Let $i\in [n]$. The ideal $w_0\cdot \cj^0_{i-1}$ is the Schubert determinantal ideal for the permutation $u[i]$ and $w_0\cdot\ck_i$ is the Schubert determinantal ideal for the permutation $v[i]$. In particular,
\[
\cx_{u[i]}^{\op} = \Spec\ \!\C[\mathbf{z}]/(w_0\cdot\cj^0_{i-1})\qquad\mbox{and} \qquad\cx_{w_0v[i]} = \Spec\ \!\C[\mathbf{z}]/\ck_{i}
\]
for all $i=1,2,\dots, n$. Furthermore,  
when $i<j\leq n-1$, the ideal $w_0\cdot\cj_{i-1}^0 + w_0\cdot\ck_j$ is the Schubert determinantal ideal for the permutation $w[i,j]:=u[i]v[j]$, so that 
\[
\cx_{w_0w[i,j]} =  \Spec\ \!\C[\mathbf{z}]/(\cj_{i-1}^0 + \ck_j).
\]
\end{lemma}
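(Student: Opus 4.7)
The plan is to directly compute the Schubert determinantal ideal $\ci_w$ for each of $w = u[i]$, $v[i]$, and $w[i,j]$ using the characterization $\ci_w = \langle (r_{pq}(w)+1)\text{-minors of } Z_{p\times q} : p, q \in [n]\rangle$, and then apply the $w_0$-action $w_0\cdot z_{k\ell} = z_{n+1-k,\ell}$. In one-line notation, $u[i] = [2, 3, \ldots, i, 1, i+1, \ldots, n]$ has length $i-1$, $v[i] = [1, 2, \ldots, i-1, n, i, i+1, \ldots, n-1]$ has length $n-i$, and the product $w[i,j] = u[i]v[j]$ equals $[2, 3, \ldots, i, 1, i+1, \ldots, j-1, n, j, j+1, \ldots, n-1]$, with $\ell(w[i,j]) = (i-1) + (n-j)$.

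For $u[i]$, a direct computation yields $\{u[i](1), \ldots, u[i](q)\} = \{2, \ldots, q+1\}$ for $q < i$ and $\{1, \ldots, q\}$ for $q \geq i$. Hence nontrivial rank-drops $r_{pq}(u[i]) < \min(p, q)$ occur only for $q < i$, specifically at $(1, q)$ (where $r_{1,q} = 0$, yielding the generators $z_{11}, z_{12}, \ldots, z_{1, i-1}$) and at $(p, q)$ with $2 \leq p \leq q < i$ (where $r_{pq} = p-1$, yielding $p$-minors of $Z_{p \times q}$). Any such $p \times p$ minor must use all $p$ rows of $Z_{p \times q}$ and in particular the first row $(z_{11}, \ldots, z_{1q})$, whose entries already lie in $\langle z_{11}, \ldots, z_{1, i-1}\rangle$ since $q < i$; so the $p$-minors lie in this ideal. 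Therefore $\ci_{u[i]} = \langle z_{11}, \ldots, z_{1, i-1}\rangle = w_0 \cdot \cj_{i-1}^0$.

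For $v[i]$, similarly $\{v[i](1), \ldots, v[i](q)\}$ equals $\{1, \ldots, q\}$ for $q < i$, $\{1, \ldots, i-1, n\}$ for $q = i$, and $\{1, \ldots, q-1\} \cup \{n\}$ for $q > i$. Nontrivial rank-drops occur precisely for $(p, q)$ with $q \geq i$ and $q \leq p \leq n-1$, where $r_{pq}(v[i]) = \min(p, q) - 1$. I claim that all such conditions are generated by the $i$-minors of the $(n-1) \times i$ northwest submatrix. The $i$-minors of any $p \times i$ submatrix with $p \leq n-1$ lie in this family directly, and for any $q \times q$ minor of a $p \times q$ submatrix with $q > i$ and $q \leq p \leq n-1$, a Laplace expansion along the first $i$ columns writes the minor as a sum of products, each containing an $i$-minor of the first $i$ columns restricted to some $i$-subset of $\{1, \ldots, p\} \subseteq \{1, \ldots, n-1\}$, hence lying in the claimed family. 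Thus $\ci_{v[i]}$ is the ideal of $i$-minors of the $(n-1) \times i$ northwest submatrix; since $B \mapsto \{n+1-k : k \in B\}$ bijects $i$-subsets of $\{2, \ldots, n\}$ with $i$-subsets of $\{1, \ldots, n-1\}$, this ideal coincides with $w_0 \cdot \ck_i$.

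For $w[i, j] = u[i]v[j]$ with $i < j \leq n-1$, the one-line notation shows that $\{w[i,j](1), \ldots, w[i,j](q)\}$ coincides with the $u[i]$-set when $q < i$ and with the $v[j]$-set when $q \geq i$. Since the rank conditions depend only on these sets, the nontrivial conditions for $w[i, j]$ split as the union of those for $u[i]$ and $v[j]$, so $\ci_{w[i, j]} = \ci_{u[i]} + \ci_{v[j]} = w_0 \cdot \cj_{i-1}^0 + w_0 \cdot \ck_j$. Applying $w_0$ to both sides and invoking Equation~\eqref{eqn.Schubertscheme} gives the stated description $\cx_{w_0 w[i,j]} = \Spec\ \!\C[\mathbf{z}]/(\cj_{i-1}^0 + \ck_j)$. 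The principal bookkeeping obstacle throughout is the absorption step---showing that the rank conditions at intermediate $(p, q)$ values are implied by a small set of extremal generators---which reduces in each case to either the first-row-vanishing argument (for $u[i]$) or the Laplace expansion argument along the first $i$ columns (for $v[i]$).
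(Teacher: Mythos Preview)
Your proof is correct and carries out precisely the direct verification the paper alludes to when it says the lemma is ``straightforward to verify using the definition of a matrix Schubert variety.'' The absorption arguments (first-row vanishing for $u[i]$, generalized Laplace expansion along the first $i$ columns for $v[i]$) are exactly what is needed to reduce the full collection of rank conditions to the essential generators, and your splitting of the rank conditions for $w[i,j]$ according to whether $q<i$ or $q\geq i$ correctly recovers $\ci_{w[i,j]}=\ci_{u[i]}+\ci_{v[j]}$.
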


Subvarieties of the flag variety equal to the intersection of a Schubert and opposite Schubert variety are called \emph{Richardson varieties}.  Let $u,v\in S_n$ and recall that the Richardson variety $X_u^{\op}\cap X_v$ is nonempty if and only if $u\leq_{\Br} v$ in Bruhat order.  In that case, we say that the closure $\overline{\pi^{-1}(X_u^{\op}\cap X_v)} \subseteq M_n(\C)$ is a \emph{matrix Richardson variety}. The next lemma shows that the sum of two of the  Schubert ideals considered above defines a matrix Richardson variety.

\begin{lemma} \label{lem.Schubert4} 
The ideal $w_0\cdot\cj_{i}^0 + \ck_{j}$ is prime for all $i, j \in [n]$ and $i=0$.  Furthermore, when $1\leq i \leq j\leq n$, the intersection 
\[
\cx_{u[i]}^\op \cap \cx_{w_0v[j]} = \Spec\ \!\C[\mathbf{z}]/ (w_0\cdot\cj_{i-1}^0 + \ck_{j})
\]
is a matrix Richardson variety. 
\end{lemma}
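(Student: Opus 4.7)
The plan is to prove primality first via a direct decomposition of the quotient ring, then to identify the resulting scheme as a matrix Richardson variety by way of a Bruhat comparison.

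The key observation is that the two summands involve disjoint sets of variables. Applying the $w_0$-action $z_{k,\ell}\mapsto z_{n+1-k,\ell}$ to the generators of $\cj_{i-1}^0$ yields
\[
w_0\cdot \cj_{i-1}^0 = \langle z_{1,1}, z_{1,2}, \ldots, z_{1,i-1}\rangle,
\]
whose generators involve only row $1$ of $Z$, while the generators of $\ck_j$ are $j\times j$ minors drawn from rows $\{2,\ldots,n\}$ and thus involve no row-$1$ variable. Writing $\C[\mathbf{z}_{\ge 2}]$ for the polynomial subring in the row-$2$ through row-$n$ entries, this disjointness gives a tensor decomposition
\[
\C[\mathbf{z}]\big/\bigl(w_0\cdot \cj_{i-1}^0 + \ck_j\bigr) \;\cong\; \C[z_{1,i}, z_{1,i+1}, \ldots, z_{1,n}] \otimes_\C \bigl(\C[\mathbf{z}_{\ge 2}]/\ck_j\bigr).
\]
By Lemma~\ref{lem.Schubert}, $w_0\cdot \ck_j$ is the Schubert determinantal ideal $\ci_{v[j]}$, which is prime by the proposition characterizing matrix Schubert varieties recalled in Section~\ref{sec:matrixSchubs}. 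Since $w_0$ acts as a $\C$-algebra automorphism of $\C[\mathbf{z}]$, the ideal $\ck_j$ is prime in $\C[\mathbf{z}]$; as its generators lie in $\C[\mathbf{z}_{\ge 2}]$, it is prime there as well. A tensor product of two $\C$-algebra domains over $\C$ is a domain, so the quotient above is a domain and $w_0\cdot \cj_{i-1}^0 + \ck_j$ is prime. The boundary cases $i = 0$ (where the first summand is trivial) and $j = n$ (where $\ck_n = 0$) follow by the same argument applied to the remaining factor.

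For the Richardson identification in the range $1\le i\le j\le n$, I would verify $u[i]\le_{\Br} w_0 v[j]$ using the standard rank-matrix criterion $r_{pq}(u[i])\ge r_{pq}(w_0 v[j])$ for all $p,q$. Computing the two one-line notations and splitting on whether $q<i$, $i\le q<j$, or $q\ge j$, each case reduces to an elementary inequality of the form $\min(p,q)\ge \max(0,p+q-n)$. Hence the Richardson variety $X_{u[i]}^\op \cap X_{w_0 v[j]}$ is nonempty. Using the identities $\cx_w^\op\cap G = \pi^{-1}(X_w^\op)$ and the analogous statement for $\cx_w\cap G$, the intersection of the two matrix Schubert schemes, restricted to $G$, equals $\pi^{-1}(X_{u[i]}^\op\cap X_{w_0 v[j]})$. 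Since the scheme-theoretic intersection on $M_n(\C)$ has already been shown to be an integral domain, it coincides with the closure of this preimage, that is, the matrix Richardson variety.

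I do not anticipate a serious obstacle. The decisive step is recognizing the variable-disjointness of the two summands, after which primality is formal; the Richardson identification then follows from the Bruhat comparison and standard facts about Schubert varieties.
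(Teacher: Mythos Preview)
Your proof is correct and follows essentially the same approach as the paper: both arguments hinge on the observation that $w_0\cdot\cj_i^0$ involves only row-$1$ variables while $\ck_j$ involves only rows $2$ through $n$, so the sum of these primes is prime; both then verify $u[i]\leq_{\Br} w_0v[j]$ (you via the rank-matrix criterion, the paper via the tableau criterion of Bj\"orner--Brenti) and conclude the Richardson identification from primality plus Lemma~\ref{lem.Schubert}. Your tensor-product formulation is a slightly more explicit rendering of what the paper states in one line.
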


\begin{proof}  Note that $w_0\cdot \cj_0^0=\langle 0\rangle$ and otherwise $w_0\cdot \cj_i^0 = \left< z_{11}, z_{12}, \ldots, z_{1i} \right>$ and $\ck_j \subseteq \C[\mathbf{z}']$ where $\mathbf{z'} = \{z_{k\ell} \mid 2\leq k \leq n, 1\leq \ell \leq n\}$. Since $w_0\cdot \cj_i^0$ and $\ck_j$ are prime, and their generators have no common variables, the sum is prime. 

It is straightforward to check (using, for example, the tableau criterion from~\cite[Thm.~2.6.3]{Bjorner-Brenti}) that $i\leq j$ implies $u[i]\leq_{\Br} w_0v[j]$.  This implies $X_{u[i]}^{\op} \cap X_{w_0v[j]}$ is nonempty. By Lemma~\ref{lem.Schubert} the sum $w_0\cdot\cj_{i-1}^0 + \ck_{j}$ defines the affine subscheme $\cx_{u[i]}^{\op} \cap \cx_{w_0v[j]}$ in $\C[\mathbf{z}]$. Since $w_0\cdot\cj_{i-1}^0 + \ck_{j}$ is prime, this intersection is reduced and $$\overline{\pi^{-1}(X_{u[i]}^{\op} \cap X_{w_0v[j]})} = \cx_{u[i]}^{\op} \cap \cx_{w_0v[j]},$$ as desired.
\end{proof}

\subsection{Matrix Hessenberg schemes} \label{sec.Hess.schemes}
We now define our main geometric objects of interest, matrix Hessenberg schemes.   Throughout this section, $R$ denotes denoted a polynomial ring with coefficients in $\C$.  We take $R=\C$ and $R=\C[t]$ in future sections.  

\begin{definition} A function $h:[n]\to [n]$ is a \emph{Hessenberg function} if $h(i)\leq h(i+1)$ for all $i<n$ and $h(i)\geq i$ for all $i$.
\end{definition}

We denote a Hessenberg function by listing its values, e.g.~$h=(h(1), \ldots, h(n))$.  We say that $h$ is \emph{indecomposable} whenever $h(i)>i$ for all $i<n$. Otherwise, we say that $h$ is \emph{decomposable}. We adopt the convention that $h(0)=0$. 

Let $e_i$ denote the $n\times 1$ column vector with unique nonzero entry equal to $1$ in the $i$th row, and define 
\begin{eqnarray}\label{eqn.col}
v_j := \sum_{i=1}^n z_{ij}e_i 
\end{eqnarray}
for $j=1,\dots, n$.
Throughout this section, let $\mx\in M_n(R)$ denote a $n\times n$ matrix with entries in the polynomial ring $R$. 
For each $i\in [n]$ such that $h(i)<n$, define $\ci_{\mx,h, i}\subseteq R[\mathbf{z}]$ to be the ideal in $R[\mathbf{z}]$ generated by the condition that 
\begin{eqnarray}\label{eqn.rank.cond}
\mathrm{rk} \begin{bmatrix} | & | &  & | & | & | &  & | \\  \mx v_1 & \mx v_2 & \cdots & \mx v_i & v_1 & v_{2} & \cdots & v_{h(i)}\\ | & | &  & | & | & | &  & |
\end{bmatrix} \leq h(i).
\end{eqnarray}
That is, $\ci_{\mx,h, i}$ is the ideal generated by all $(h(i)+1)\times (h(i)+1)$-minors of the $n\times (h(i)+i)$-matrix above.  Set $\ci_{\mx,h, i}$ equal to the zero ideal whenever $h(i)=n$. 
We define
\begin{eqnarray}\label{eqn.hess-ideal}
\ci_{\mx, h} : = \sum_{i\in [n]} \ci_{\mx, h,  i}\subseteq R[\mathbf{z}].
\end{eqnarray}
Observe that $\ci_{\mx, h}$ is invariant under action of $B$ on $R[\mathbf{z}]$ induced by right multiplication on~$M_n(\C)=\Spec\, \C[\mathbf{z}]$.

Recall that $G=GL_n(\C)$ is a principal open subset of the affine scheme $M_n(\C)$.  We identify $G$ as an affine subscheme of $\Spec\ \!\C[\mathbf{z}, y]$ with coordinate ring $\C[\mathbf{z}, d^{-1}]:= \C[\mathbf{z}, y]/ \left< yd-1\right>$ obtained via localization, where $d\in \C[\mathbf{z}]$ denotes the determinant function. Let  
\begin{eqnarray}\label{eqn.GLn.coord}
\iota: R[\mathbf{z}]\hookrightarrow R\otimes_\C \C[\mathbf{z}, d^{-1}] =: R[\mathbf{z}, d^{-1}]
\end{eqnarray}
denote the canonical injection. Given an ideal $\ci\subseteq R[\mathbf{z}]$, 
we denote the saturation of $\ci$ with respect to $d$ by $\tilde{\ci}$, i.e.  
\begin{equation}\label{eq.definitionItilde}
    \tilde{\ci}:= \iota^{-1}(\iota(\ci)R[\mathbf{z}, d^{-1}]),
\end{equation} where $\iota(\ci)R[\mathbf{z}, d^{-1}]$ is  the ideal generated by the image of $\ci$ under $\iota$. It is immediate that $\ci\subseteq \tilde{\ci}$. We are now equipped to define matrix Hessenberg schemes; note that $R=\C$ in the definition below.  

\begin{definition} Given a matrix $\mx\in \mathfrak{gl}_n(\C)$ and Hessenberg function $h:[n]\to [n]$, the associated \emph{matrix Hessenberg scheme} is the affine scheme $$\fX_{\mx, h}:= \Spec\ \!\C[\mathbf{z}]/\tilde{\ci}_{\mx, h}.$$
\end{definition}

Note that, if $h$ is a fixed Hessenberg function, the matrix Hessenberg schemes associated to conjugate matrices are isomorphic~\cite[Rem.~3.5]{GP-semisimple}.  

\begin{rem}\label{rem:sameideal}
It is immediate from the definition that $\fX_{\mx, h}= \fX_{\mx', h}$  if the two ideals $\ci_{\mx, h}$ and $\ci_{\mx', h} $ are equal. Thus if $\mx$ and $\mx'$ result in identical rank conditions \eqref{eqn.rank.cond}, then the corresponding matrix Hessenberg schemes are equal. 
\end{rem}

In the literature, Hessenberg varieties are typically defined as subvarieties of the flag variety $G/B$ using the determinantal equations as defined for each $i\in [n]$ in~\eqref{eqn.rank.cond} above. There is some ambiguity regarding whether a Hessenberg variety is the subscheme of $G/B$ defined by these equations or the reduced variety supporting this subscheme (see~\cite[\S4]{Insko-Tymoczko-Woo} for more discussion on this topic).  We adopt the former convention in this paper (which is also the convention of~\cite{Insko-Tymoczko-Woo}).

Hessenberg schemes satisfy the following properties; see~\cite[\S2]{GP-semisimple} for a detailed discussion. 

\begin{rem}\label{rem.quotient} Given $\mx\in \mathfrak{gl}_n(\C)$ and  $h: [n] \to [n]$ a Hessenberg function, let  $Y_{\mx, h}$ denote corresponding the Hessenberg variety in the flag variety $G/B$.  Then
\begin{enumerate}
\item $\pi^{-1}(Y_{\mx, h}) = G\cap \Spec\ \!\C[\mathbf{z}] / \ci_{\mx, h}$, and
\vspace*{.08in}
\item $\fX_{\mx, h} = \overline{\pi^{-1}(Y_{\mx, h})}$.  
\end{enumerate}
\end{rem}

We say $i\in[n]$ is a \emph{corner of the Hessenberg function $h$} whenever $h(i)>h(i-1)$. Denote the set of all corners by $\cc(h)$. Our convention that $h(0)=0$ implies that $1$ is always a corner of $h$. For each $i\in [n]$ we set
\[
i^*:= \max\{ k\in [n] \mid h(k)=h(i) \}.
\]
Notice that if $i$ is a corner of $h$, then $i^*+1 \in \cc(h)$. Furthermore, there is no corner $k$ in $\cc(h)$ such that $i<k<i^*+1$. The following statement is proved in~\cite[Lemma 3.6]{GP-semisimple}; note that version stated in~\cite{GP-semisimple} assumes $R=\C$, but the proof generalizes immediately to the polynomial setting.

\begin{lemma}\label{lem.corners} Let $R$ be a polynomial ring with coefficients in $\C$. For all Hessenberg functions $h$ and matrices $\mx \in M_n(R)$,
\[
\ci_{\mx, h} = \sum_{i\in \cc(h)} \ci_{\mx, h, i ^*}.
\]
\end{lemma}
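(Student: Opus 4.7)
The plan is to prove the equality of ideals by two inclusions, with the reverse inclusion being immediate from the definition of $\ci_{\mx,h}$ as a sum over all $i\in[n]$. For the forward inclusion, it suffices to show that for every $i\in[n]$, the ideal $\ci_{\mx,h,i}$ is contained in $\ci_{\mx,h,c^*}$ for some corner $c\in\cc(h)$.

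\medskip

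First I would define, for a given $i\in[n]$, the integer $c := \max\{k\in\cc(h) \mid k\leq i\}$, which exists because $1\in\cc(h)$. Since $c$ is the \emph{largest} corner $\leq i$, no $j$ with $c < j \leq i$ can satisfy $h(j)>h(j-1)$, so $h$ is constant on the interval $[c,i]$ and in particular $h(c)=h(i)$. Combined with the definition of $c^*$, this gives $c\leq i\leq c^*$ and $h(c^*)=h(c)=h(i)$.

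\medskip

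Next I would compare the generating matrices directly. The ideal $\ci_{\mx,h,i}$ is generated by the $(h(i)+1)\times(h(i)+1)$ minors of
\[
M_i := \bigl[\,\mx v_1\,\cdots\,\mx v_i\;\;v_1\,\cdots\,v_{h(i)}\,\bigr],
\]
while $\ci_{\mx,h,c^*}$ is generated by the $(h(c^*)+1)\times(h(c^*)+1)$ minors of
\[
M_{c^*} := \bigl[\,\mx v_1\,\cdots\,\mx v_{c^*}\;\;v_1\,\cdots\,v_{h(c^*)}\,\bigr].
\]
Because $h(i)=h(c^*)$, the minor size agrees, and $M_i$ is obtained from $M_{c^*}$ by deleting the columns $\mx v_{i+1},\ldots,\mx v_{c^*}$ (which reduces to no deletion when $i=c^*$). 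Every maximal minor of $M_i$ of the prescribed size is therefore a minor of $M_{c^*}$ of the same size (same rows, a subset of columns), giving $\ci_{\mx,h,i}\subseteq \ci_{\mx,h,c^*}$. The degenerate case $h(i)=n$ is automatic, since then $\ci_{\mx,h,i}=\langle 0\rangle$ by convention.

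\medskip

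Finally, summing over $i\in[n]$ yields
\[
\ci_{\mx,h} \;=\; \sum_{i\in[n]}\ci_{\mx,h,i} \;\subseteq\; \sum_{c\in\cc(h)}\ci_{\mx,h,c^*},
\]
and the reverse inclusion holds termwise, completing the proof. There is no real obstacle here: the statement is essentially a bookkeeping observation that enlarging the set of $\mx v_j$ columns (while keeping the rank bound and the set of $v_j$ columns fixed) can only increase the determinantal ideal, so within any maximal constant run of $h$ only the rightmost index $c^*$ is needed.
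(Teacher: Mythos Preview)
Your proof is correct. The paper does not actually prove this lemma but instead cites \cite[Lemma~3.6]{GP-semisimple}; your argument is the natural one and almost certainly coincides with the proof given there, namely that for each $i$ one locates the unique corner $c$ with $c\le i\le c^*$ and observes that the generating matrix for $\ci_{\mx,h,i}$ is a column-submatrix of that for $\ci_{\mx,h,c^*}$ with the same minor size.
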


\begin{example}\label{ex.2444.2}We use Lemma~\ref{lem.corners} to illustrate that a nilpotent matrix Hessenberg scheme $\fX_{\nilp, h}$ need not  be equidimensional. Let $h=(2,4,4,4)$, which is indecomposable. The corner set of $h$ is $\cc(h)=\{1,2\}$, and we have   $1^*=1$ and $2^*=4$. Recall that $\ci_{\mx, h, 4} = \left< 0 \right>$ by definition. Thus Lemma~\ref{lem.corners} implies $\ci_{\mx, h} = \ci_{\mx, h, 1}$. 

Let $\mx = \nilp:=E_{14}$ be the $4\times 4$ nilpotent matrix with unique nonzero entry equal to $1$ in position $(1,4)$. Note that $\nilp$ is a nilpotent element in the minimal sheet $\fg_{(2,1,1)}$. 
The rank conditions from \eqref{eqn.rank.cond} generating $\ci_{\nilp,h}$ are
\begin{eqnarray}
\rk\begin{bmatrix} 
z_{41} &  z_{11} &  z_{12}\\ 
0	&  z_{21} &  z_{22}\\
0	&  z_{31} &  z_{32}\\
0	&  z_{41} &  z_{42}\\
 \end{bmatrix}\leq 2,
\end{eqnarray}
which is equivalent to insisting that all $3\times 3$ minors vanish. Thus the ideal $\ci_{\nilp, h}$ has generators $z_{41}(z_{21} z_{32}- z_{22} z_{31})$, $z_{41}(z_{21} z_{42}- z_{22} z_{41})$ and $z_{41}(z_{31} z_{42}- z_{32} z_{41})$.

We use Macaulay2 to check that $\ci_{\nilp, h}$ is radical, and has associated primes
\begin{align*}
\ck_2 &=\langle z_{32}z_{41}-z_{31}z_{42},z_{22}z_{41}-z_{21}z_{42},z_{22}z_{31}-z_{21}z_{32} \rangle\quad \mbox{and} \\
\cj_1^0 &=\langle z_{41} \rangle.
\end{align*}
We can identify each prime ideal as the determinantal ideal defining a matrix Schubert variety using Lemma~\ref{lem.Schubert}. The two relevant permutations in this case are $v[2] = [1423]$ and $u[2]$ = [2134].
Observe that the vanishing set of both prime ideals intersects $GL_4(\C)$, so $\tilde\ci_{\nilp, h} = \ci_{\nilp, h}$. In Section~\ref{se:AssociatedPrimesMinSheet}, we show that $\tilde\ci_{\mx, h} = \ci_{\mx, h}$ for any $h$ when $\mx$ is in the minimal sheet. For $h=(2,4,4,4)$, we conclude that $\fX_{\nilp, h} $ is the scheme theoretic union of matrix Schubert varieties:
$$
\fX_{\nilp, h} = \cx_{w_0[1423]}\cup \cx_{w_0[2134]} = \cx_{[4132]}\cup \cx_{[3421]}.
$$
Since $ \dim_\C \cx_{[4132]}=14$ and $\dim_\C \cx_{[3421]}=15$, we have shown that the reduced (but not irreducible) matrix Hessenberg scheme $\fX_{\nilp, h}$ is \emph{not equidimensional}.

The interested reader can compare this with~\cite[Ex.~3.9]{GP-semisimple} which considers the matrix Hessenberg scheme for the semisimple element $\semi = (1,0,0,0)\in \fg_{(2,1,1)}$ and where it is shown that for $h=(2,4,4,4)$ we have
\[
\fX_{\semi, h} = \cx_{[4132]}\cup \cx_{[2134]}^{\op}.
\]
In particular, we note that $\dim \cx_{[4132]} = 14$ and $\dim \cx_{[2134]}^{\op} = 15$ so $\fX_{\nilp,h}$ and $\fX_{\semi,h}$ have the same number of components of a given dimension.  This phenomenon is explained by the existence of a flat degeneration from each component of $\fX_{\semi, h}$ to a corresponding component of $\fX_{\nilp, h}$; see Theorem~\ref{thm.components.indecomp} below. 
\end{example}


\section{Sheets and Families of matrix Hessenberg schemes} \label{sec.sheets}

In this section, we consider the stratification of the Lie algebra $\mathfrak{gl}_n(\C)$ by sheets.  We make Question~\ref{quest1} more precise by defining, for each matrix $\mx$, a curve in the sheet containing $\mx$ and family of matrix Hessenberg schemes with general fiber $\fX_{\mx, h}$ and special fiber equal to a nilpotent matrix Hessenberg scheme. Our main result, stated precisely in Theorem~\ref{thm.flat} below, is that this family is flat whenever $\mx$ is an element of the minimal sheet.

\subsection{Sheets}
Given an arbitrary complex reductive group $G$, we consider the adjoint action of $G$ on its Lie algebra $\fg$.  The \emph{sheets of $\fg$} are irreducible components of the locally closed $G$-stable subvariety $\fg^{(d)} := \{ x\in \fg \mid \dim Gx = d\}$. That is, each sheet in $\fg$ is a maximal irreducible subset of $\fg$ consisting of $G$ orbits (with respect to the adjoint action) of a fixed dimension.  This decomposition was first considered by Peterson for $GL_n(\C)$~\cite{Peterson-thesis} and  Borho~\cite{Borho1981} classified the orbits appearing in each sheet of the corresponding Lie algebra. The study of sheets arises naturally in the study of orbits and representation theory of algebraic groups~\cite{Borho-Kraft1979, Katsylo1982, Panyushev2001}. Although sheets are defined for arbitrary reductive groups, we focus here on $G=GL_n(\C)$ and leave the study of Question~\ref{quest1} and matrix Hessenberg schemes in other types for future work.

Kraft proved that the sheets in $\mathfrak{gl}_n(\C)$ are in bijection with partitions of $n$~\cite[\S2]{Kraft}. Recall that a \emph{partition} $\lambda$ of $n$ is a list $\lambda=(\lambda_1,  \lambda_2, \ldots , \lambda_\ell)$ of positive integers such that $\lambda_1\geq \lambda_2 \geq \cdots \geq \lambda_\ell$ and  $n=\lambda_1+\cdots+\lambda_\ell$. We associate a partition to $\mx\in \mathfrak{gl}_n(\C)$ as follows.
Recall that a generalized Jordan block of the matrix $\mathsf{x}\in \mathfrak{gl}_n(\C)$ consists of all Jordan blocks of $\mathsf{x}$ with the same eigenvalue $c\in \C$, and each generalized Jordan block determines a partition $\mu$
recording the sizes of these Jordan blocks in decreasing order. Let $\mu_j$ denote the $j$th entry of $\mu$.
The algebraic multiplicity of the eigenvalue $c$ is given by $|\mu|=\sum_j \mu_j$.

Let $\{c_1, c_2,\dots, c_k\}$  be the distinct eigenvalues of $\mx$ and denote the partition corresponding to $c_i$ by $\mu^i$. 
Define $\lambda_{\mx,j}:= \sum_{i=1}^k \mu^{i}_j$. Observe that $\lambda_{\mx,j}$ does not depend on the order of the eigenvalues chosen, and $\lambda_\mx:= (\lambda_{\mx,1}, \lambda_{\mx, 2}, \ldots)$ is a partition of $n$. For each partition $\lambda$ the sheet
$\fg_\lambda$ is the union of orbits $\co_\mx$ such that $\lambda_\mx=\lambda$.

\begin{rem}
If $\mx$ is nilpotent, all its eigenvalues are $0$, resulting in only one generalized Jordan block with partition $\mu$. In this case $\lambda_\mx = \mu$.  Thus each sheet $\fg_\lambda$ contains a unique nilpotent conjugacy class, namely the conjugacy class associated to the partition $\mu$.
\end{rem}

\begin{example} \label{matrix.ex} Consider the matrix
\begin{eqnarray*}
\mx =\begin{bmatrix} 4 & 0 &0 & 0 & 0 & 0 \\ 0 & 4 & 0 & 0 & 0 & 0\\ 0 & 0 & 5 & 0 & 0 & 0\\ 0 & 0 & 0 & 5 & 1 & 0\\ 0 & 0 & 0 & 0 & 5 & 1\\ 0 & 0 & 0 & 0 & 0 & 5
\end{bmatrix} 
\end{eqnarray*} 
with eigenvalues $c_1=4$ and $c_2=5$.
Then $\mu^1=(1,1)$ and $\mu^2=(3,1)$, resulting in $\lambda_\mx = (1+3, 1+1) = (4,2)$ so $\mx\in \fg_{(4,2)}$. 
\end{example}

\begin{definition}
The \emph{regular sheet} $\fg_{reg}:= \fg_{(n)}$ consists of all  orbits of dimension $n^2 - n$.
The \emph{minimal sheet} $\fg_{(2,1^{n-2})}$ consists of orbits of minimal nonzero dimension. In particular, $\fg_{(2,1^{n-2})}$ contains the unique nilpotent orbit of minimal nonzero dimension in $\mathfrak{gl}_n(\C)$, called the \emph{minimal nilpotent orbit}. 
\end{definition}

\begin{rem}\label{rem.minimal} The minimal sheet $\fg_{(2,1^{n-2})}$ contains only two types of conjugacy classes.
If $\lambda_\mx=(2,1^{n-2})$, then $\mx$ has at most two eigenvalues.  If $\mx$ has exactly two eigenvalues $c_1$ and $c_2$ then  $\mu^{1} = (1)$ and $\mu^{2}=(1,1,\ldots, 1)$. Thus $\mx$ is conjugate to $\diag(c_1, c_2, \ldots, c_2)$.  If $\mx$ has a single eigenvalue, then $\mu^{1} =(2,1,\ldots, 1)$, and $\mx$ is conjugate to $cI_n +E_{n-1,n}$.
\end{rem}

\begin{lemma}\label{lem:equality} Up to isomorphism there are only two matrix Hessenberg schemes over the minimal sheet, namely $\fX_{\semi, h}$ and $\fX_{\nilp ,h}$ where $\semi=\diag(1,0,\dots,0)$ and $\nilp := E_{1,n}$. 
\end{lemma}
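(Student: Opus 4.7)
The plan is to use the classification of conjugacy classes in the minimal sheet from Remark~\ref{rem.minimal} together with two elementary invariance properties of the defining rank conditions to reduce every matrix Hessenberg scheme over $\fg_{(2,1^{n-2})}$ to one of the two representatives $\semi$ or $\nilp$.

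First I would invoke the fact, noted after the definition of a matrix Hessenberg scheme, that $\fX_{\mx, h} \cong \fX_{g\mx g^{-1}, h}$ for all $g \in G$. So it suffices to exhibit a single matrix from each conjugacy class in $\fg_{(2,1^{n-2})}$ for which the matrix Hessenberg scheme is isomorphic to $\fX_{\semi, h}$ or $\fX_{\nilp, h}$. By Remark~\ref{rem.minimal}, conjugacy class representatives come in exactly two types: the semisimple matrices $\diag(c_1, c_2, \ldots, c_2)$ with $c_1 \neq c_2$, and the non-semisimple matrices $c I_n + E_{n-1, n}$.

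Next I would establish two elementary invariance statements at the level of ideals; by Remark~\ref{rem:sameideal} these immediately yield equalities (not merely isomorphisms) of matrix Hessenberg schemes.
\begin{enumerate}
\item \emph{Translation invariance:} For any $a \in \C$, $\ci_{\mx + a I_n, h} = \ci_{\mx, h}$. Indeed, $(\mx + aI_n) v_j = \mx v_j + a v_j$, and since $h(i) \geq i$ the column $v_j$ already appears among $v_1, \ldots, v_{h(i)}$ in the matrix of~\eqref{eqn.rank.cond}. Subtracting $a v_j$ from the $j$-th column is an elementary column operation that preserves every $(h(i)+1) \times (h(i)+1)$ minor.
\item \emph{Scaling invariance:} For any nonzero $c \in \C$, $\ci_{c\mx, h} = \ci_{\mx, h}$. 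Scaling each of the first $i$ columns of the matrix in~\eqref{eqn.rank.cond} by $c$ multiplies each $(h(i)+1)$-minor by a nonzero power of $c$, leaving the generated ideal unchanged.
\end{enumerate}

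With these in hand, the semisimple case reduces by writing $\diag(c_1, c_2, \ldots, c_2) = c_2 I_n + (c_1 - c_2)\semi$ and applying translation invariance followed by scaling invariance (using $c_1 \neq c_2$) to conclude $\fX_{\diag(c_1, c_2, \ldots, c_2), h} = \fX_{\semi, h}$. Similarly, translation invariance gives $\fX_{cI_n + E_{n-1,n}, h} = \fX_{E_{n-1,n}, h}$, and since $E_{n-1,n}$ and $E_{1,n} = \nilp$ are both rank-one nilpotent matrices they lie in the same (minimal) nilpotent $G$-orbit, so $\fX_{E_{n-1,n}, h} \cong \fX_{\nilp, h}$.

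I do not expect any serious obstacles here; the argument is essentially bookkeeping on top of the classification in Remark~\ref{rem.minimal}. The only point requiring care is verifying that the column operations used in translation invariance really do preserve all minors of the relevant size, which follows cleanly from the fact that the $v_j$ columns already appear in the display~\eqref{eqn.rank.cond}.
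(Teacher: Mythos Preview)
Your proof is correct and follows essentially the same route as the paper's: both reduce via Remark~\ref{rem.minimal} to the two representative types, then use translation and scaling invariance of the rank conditions (which the paper states implicitly as ``the rank inequalities of~\eqref{eqn.rank.cond} are satisfied for $\mx$ if and only if they are satisfied for $\mx'$'') to reach $\semi$ and $E_{n-1,n}$, finishing with conjugacy to $\nilp$. Your version has the modest advantage of making the two invariance properties explicit and justifying them via column operations.
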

\begin{proof} By Remark~\ref{rem.minimal} there are only two types of matrices in $\fg_{(2,1^{n-2})}$, up to a choice of eigenvalues.  If $\mx = \diag(c_1, c_2,\dots,c_2)$ for some $c_1,c_2\in \C$ with $c_1\neq c_2$ then the rank inequalities of \eqref{eqn.rank.cond} are satisfied for $\mx$ if and only if they are satisfied for $\mx' = \diag(c_1-c_2,0, \dots, 0)$ which are themselves satisfied if and only if they are satisfied for $\semi = \diag(1, 0,\dots, 0)$. It follows from Remark~\ref{rem:sameideal} that, for any Hessenberg function $h$, 
$\ci_{\mx,h}=\ci_{\semi,h}$. If $\mx = cI_n + E_{n-1,n}$ for some $c\in \C$, then the rank inequalities of \eqref{eqn.rank.cond} are satisfied for $\mx$ if and only if they are satisfied for $\mx'=E_{n-1,n}$ and thus $\ci_{\mx, h} = \ci_{\mx',h}$ by Remark~\ref{rem:sameideal}. The desired statement now follows from the fact that $\mx'$ and $\nilp$ are conjugate.
\end{proof}

For any $\mx\in \fg_\lambda$, we  identify a specific conjugate in Jordan canonical form. 
Fix an ordering of the generalized Jordan blocks as follows. Let $\{c_1, c_2,\dots, c_k\}$  denote the distinct eigenvalues of $\mx$. By relabeling, we may assume $i<j$ whenever 
\begin{itemize}
    \item $|\mu^{i}|< |\mu^{j}|$, or 
    \item $|\mu^{i}|= |\mu^{j}|$, and $\mu^{i}$ is less than $\mu^{j}$ in lexicographic ordering.
\end{itemize}

For each eigenvalue $c_i$, consider the unique tableau $\ct_{i}$ obtained by labeling the Young diagram of shape $\mu^{i}$ with entries
\begin{eqnarray*}
|\mu^{1}|+ \cdots + |\mu^{{i-1}}|+1,\,|\mu^{1}|+ \cdots + |\mu^{{i-1}}|+2,\,\ldots,\, |\mu^{1}|+ \cdots + |\mu^{{i-1}}| +|\mu^{i}| 
\end{eqnarray*}
so that all values \emph{increase} from left to right in each row, and so that all values in each row are \emph{greater than} all values in the rows below it (that is, the tableau $\ct_i$ has decreasing columns). Note that this  tableau is obtained by labeling the rows sequentially from left to right, starting with the bottom row and moving upward; see Example~\ref{ex.fill} below. Let
\[
\nilp_i = \sum E_{\ell m}
\] 
where the sum is taken over all pairs $(\ell,m)$ such that $m$ appears directly to the right of the box containing $\ell$ in $\ct_{i}$. 

Then
$\mx$ is conjugate to the matrix 
$\mx_{\mathrm{ss}}+\mx_{\mathrm{nilp}}$ with semisimple and nilpotent parts
\begin{equation}
\begin{aligned}\label{eqn.semi.part}
\mx_{\mathrm{ss}}&:= \diag(c_1, \ldots, c_1, c_2, \ldots, c_2, \ldots, c_k, \ldots, c_k)\\
\mx_{\mathrm{nilp}} &:=\nilp_1+\nilp_2+\cdots+\nilp_k,
\end{aligned}
\end{equation}
where $c_i$ occurs $|\mu^{i}|$ times as a diagonal entry of $\mx_{\mathrm{ss}}$. 

\begin{rem}\label{rem.decomp.JCF} If $\mx$ is in Jordan canonical form with blocks arranged according to $(c_1, \dots, c_k)$, and such that the nilpotent part within each generalized Jordan block has {\em increasing} block sizes, then $\mx=\mx_{\mathrm{ss}}+\mx_{\mathrm{nilp}}.$ The matrix in Example~\ref{matrix.ex} is of this form. 
\end{rem}

Finally, we identify a particular nilpotent element $\nilp_\mx$ that lies in the same sheet as $\mx.$ 
Given two tableaux $\ct$ and $\ct'$, let $\ct\circ \ct'$ denote their \emph{concatenation},  the tableau obtained by concatenating the entries in each row of $\ct$ and $\ct'$.  Let $\ct_\mx$ be tableau of shape $\lambda_\mx$ obtained by the concatenation:
$$
\ct_\mx = \ct_{1}\circ \ct_{2} \circ \cdots \circ \ct_{k}.
$$

\begin{example}\label{ex.fill} Consider the matrix $\mx\in \fg_{(4,2)}$ as in Example~\ref{matrix.ex}. We have
\[
\ct_1 =   \begin{ytableau}
2\\
1 
\end{ytableau} \;\; \text{ and } \;\; 
\ct_2 =   \begin{ytableau}
4 & 5 & 6\\
3 
\end{ytableau}
\]
and the tableau $\ct_\mx$ is given by
\[
\begin{ytableau}
2\\
1 
\end{ytableau} 
\circ
\begin{ytableau}
4 & 5 & 6\\
3 
\end{ytableau}
=
 \begin{ytableau}
 2 & 4 & 5 & 6\\
 1 & 3
\end{ytableau} \ .
\]   
\end{example}

We use the tableau $\ct_\mx$ to define a nilpotent matrix:
\begin{eqnarray}\label{eqn.nilp.def}
\nilp_\mx:= \sum E_{\ell m}
\end{eqnarray}
where the sum is taken over pairs $(\ell, m)$ such that $m$ appears in a box directly to the right of the box containing $\ell$ in $\ct_{\mx}$. We call $\nilp_\mx$ the \emph{associated nilpotent element of $\mx$.} Note that $\nilp_\mx\neq \mx_{\mathrm{nilp}}$ in general.
Since $\ct_\mx$ has shape $\lambda_\mx$,  the nilpotent matrix $\nilp_\mx$ is in the same sheet as $\mx$. 
 When $\mx$ is nilpotent, the nilpotent matrix $\nilp_\mx$ is conjugate to $\mx$. Example~\ref{matrix.ex.2} below computes $\nilp_\mx$ for the matrix $\mx$ of Example~\ref{matrix.ex}.


\subsection{Families of matrix Hessenberg schemes over arbitrary sheets} 
For $\mx\in \fg_{\lambda}$, we construct a curve $\mx_t$ that lies in $\fg_\lambda$ and contains a conjugate of $\mx$ and the associated nilpotent element $\nilp_\mx$. We then define a
family of matrix Hessenberg schemes lying over $\mx_t$ through which we will degenerate Hessenberg schemes.

\begin{definition}\label{def.sheetline}
    Suppose $\mx\in \fg_{\lambda}$ is conjugate to $\mx_{\mathrm{ss}}+\mx_{\mathrm{nilp}}$, where the latter is in Jordan canonical form with Jordan blocks arranged as in Remark~\ref{rem.decomp.JCF}. 
    Consider the path in $M_n(\C)$ given by
\begin{eqnarray} \label{eqn.xt.def}
\mx_t := t\,\mx_{\mathrm{ss}} + \nilp_\mx, \quad t\in \C
\end{eqnarray}
where $\nilp_\mx \in \fg_\lambda$ is the associated nilpotent element of $\mx$ from \eqref{eqn.nilp.def}.
We call $\mx_t$ the \emph{sheet line associated with $\mx$.} If $\mx$ is nilpotent, $\mx_t= \nilp_\mx$ and the sheet line is a single point.
\end{definition}

The astute reader will notice that $\mx$ is not necessarily on the sheet line $\mx_t$, even when $\mx= \mx_{\mathrm{ss}}+\mx_{\mathrm{nilp}}$. However, $\mx$ is conjugate a point on the line, and furthermore the entire sheet line lives in the sheet $\fg_\lambda$. We leave the proof of the following lemma to the reader. 
\begin{lemma}\label{lemma.conjugate} For all $a\in \C^*$, the matrix $\mx_a$ is conjugate to $a\mx$.   In particular, if $\mx\in \fg_{\lambda}$ then $\mx_a \in \fg_{\lambda}$ for all $a\in \C$.
\end{lemma}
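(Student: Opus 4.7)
The plan is to identify $\mx_a$, up to conjugation, via two successive conjugations: a diagonal one that rescales $\nilp_\mx$, followed by a block-triangular one that replaces $\nilp_\mx$ by $\mx_{\mathrm{nilp}}$.

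For the first step, I will define $D_a := \diag(a^{-r_1}, \dots, a^{-r_n}) \in GL_n(\C)$, where $r_\ell$ denotes one less than the column of the entry $\ell$ in the tableau $\ct_\mx$. Since each generator $E_{\ell m}$ of $\nilp_\mx$ arises from $m$ being immediately to the right of $\ell$ in $\ct_\mx$, we have $r_m - r_\ell = 1$, and so $D_a E_{\ell m} D_a^{-1} = a E_{\ell m}$. Because $D_a$ commutes with the diagonal matrix $\mx_{\mathrm{ss}}$, a direct calculation yields
\[
D_a \mx_a D_a^{-1} \;=\; a\mx_{\mathrm{ss}} + a\nilp_\mx.
\]
It therefore suffices to exhibit a conjugation from $\mx_{\mathrm{ss}} + \nilp_\mx$ to $\mx_{\mathrm{ss}} + \mx_{\mathrm{nilp}}$, since the latter is conjugate to $\mx$ by construction and scaling the resulting conjugation by $a$ yields $\mx_a \sim a\mx$.

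For the second step, I will use the decomposition $\C^n = V_1 \oplus \cdots \oplus V_k$ into $\mx_{\mathrm{ss}}$-eigenspaces (with distinct eigenvalues $c_1,\dots,c_k$). Because the $V_i$ occupy consecutive index ranges and $\nilp_\mx$ is strictly upper triangular in the standard basis, both $\mx_{\mathrm{ss}} + \nilp_\mx$ and $\mx_{\mathrm{ss}} + \mx_{\mathrm{nilp}}$ are block upper triangular for this decomposition. A short combinatorial check based on $\ct_\mx = \ct_1 \circ \cdots \circ \ct_k$ shows that the $V_i$-diagonal block of $\nilp_\mx$ is exactly $\nilp_i$, so the two matrices share identical diagonal blocks $c_i I_{V_i} + \nilp_i$. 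Since $c_1,\dots,c_k$ are pairwise distinct, any two diagonal blocks have disjoint spectra; the Sylvester equation $AX - XC = B$ then has a unique solution, and by induction on $k$ (peeling off the $V_1$-block at each stage by a single Sylvester step) one may conjugate $\mx_{\mathrm{ss}} + \nilp_\mx$ to the block-diagonal matrix $\bigoplus_i (c_i I_{V_i} + \nilp_i) = \mx_{\mathrm{ss}} + \mx_{\mathrm{nilp}}$.

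Combining the two steps gives $\mx_a \sim a\mx$ for all $a \in \C^*$. The second assertion follows immediately: scaling by $a\in \C^*$ preserves Jordan type, so $\lambda_{a\mx}=\lambda_\mx=\lambda$ places $\mx_a$ in $\fg_\lambda$; for $a=0$, the matrix $\mx_0 = \nilp_\mx$ is nilpotent of Jordan type $\lambda$ by the shape of $\ct_\mx$, so $\mx_0 \in \fg_\lambda$ as well. I expect the main technical obstacle to be the Sylvester-based block diagonalization, where the order of elimination must be chosen carefully to avoid reintroducing entries into already-cleared off-diagonal blocks; the inductive ``peel off $V_1$'' approach handles this cleanly, since eliminating the first block-row in one shot decouples $V_1$ from $V_2\oplus\cdots\oplus V_k$ without disturbing the smaller system on which induction is applied.
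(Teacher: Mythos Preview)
Your proof is correct. The paper itself does not give a proof of this lemma; it explicitly states ``We leave the proof of the following lemma to the reader,'' so there is no argument in the paper to compare against.

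Your two-step approach is a natural way to fill in the details. The diagonal conjugation $D_a$ to rescale $\nilp_\mx$ to $a\nilp_\mx$ is exactly right, and your observation that the $V_i$-diagonal block of $\nilp_\mx$ equals $\nilp_i$ (because adjacencies internal to $\ct_i$ are preserved under concatenation, and any new adjacencies in $\ct_\mx$ cross from some $\ct_i$ to a later $\ct_j$) is the key combinatorial point. The Sylvester/block-triangular argument is a standard and clean way to conclude that a block upper triangular matrix with pairwise spectrally disjoint diagonal blocks is conjugate to its block-diagonal part; your inductive ``peel off $V_1$'' organization handles the bookkeeping correctly. One small remark: in Step~2 you could bypass the explicit Sylvester machinery by simply invoking the standard fact that such a block upper triangular matrix is similar to its block-diagonal part, but what you wrote is a perfectly good proof of that fact in this context.
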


\begin{example}\label{matrix.ex.2} Using $\mx$ from Example ~\ref{matrix.ex} and the tableau from Example~\ref{ex.fill} we have, 
\begin{eqnarray*}
\nilp_\mx = \begin{bmatrix} 0 & 0 & 1 & 0 & 0 & 0\\ 0 & 0 & 0 & 1 & 0 & 0\\ 0 & 0 & 0 & 0 & 0 & 0 \\ 0 & 0 & 0 & 0 & 1 & 0\\ 0 & 0 & 0 & 0 & 0 & 1\\0 & 0 & 0 & 0 & 0 & 0 \end{bmatrix} 
\quad \mbox{ and } \quad \mx_t = \begin{bmatrix} 4t & 0 & 1 & 0 & 0 & 0\\ 0 & 4t & 0 & 1 & 0 & 0\\ 0 & 0 & 5t & 0 & 0 & 0  \\ 0 & 0 & 0 & 5t & 1 & 0\\ 0 & 0 & 0 & 0 & 5t & 1\\ 0 & 0 & 0 & 0 & 0 & 5t \end{bmatrix} . 
\end{eqnarray*}
\end{example}

\begin{example} We illustrate the calculation of the sheet line when $\mx\in \fg_{(n)}$ is a regular semisimple matrix. 
We may assume $\mx = \diag(c_1, c_2, \dots, c_n)$ is a diagonal matrix, with $c_i\neq c_j$ for $i\neq j$. The Jordan blocks of $\mx$ are all $1\times 1$ matrices and $\mu^{i} = (1)$ for all $i$. It follows that the associated partition to $\mx$ is $\lambda_\mx = (n)$, while the tableau $\ct_\mx$ is the following. 
\ytableausetup
 {mathmode, boxframe=normal, boxsize=1.6em}
\[
 \begin{ytableau}
 1 & 2 & 3 & \none[\cdots] & n
\end{ytableau}
\]
Thus $\nilp_\mx = E_{12}+E_{23}+\cdots E_{n-1, n}$, and
$$
\mx_t =
\begin{bmatrix}
tc_1 & 1 & 0 &\dots &0\\
0& tc_2 & 1& \dots &0\\
\vdots & \vdots & \vdots &\ddots &0\\
0 & 0 & 0 &\dots & 1 \\
0 & 0 & 0&  \dots & tc_n 
\end{bmatrix}
$$
is the associated sheet line.
\end{example}

\begin{example}\label{ex.min.sheet.x}  We illustrate the calculation of the sheet line when $\semi= \diag(1,0,\ldots, 0)$ is a semisimple element of the minimal sheet $\fg_{(2,1^{n-2})}$. 
 In this case,
$\mu^{1}=(1)$ and $\mu^{2}= (1^{n-1})$.
The corresponding partition is $\lambda_\mx = (2, 1, 1,\dots, 1)$, and the tableau $\mathcal{T}_\mx$ is
$$
\ytableausetup{mathmode, boxframe=normal, boxsize=1.6em, centertableaux}
\begin{ytableau}
1 \\
\end{ytableau}
\;
\circ
\;\begin{ytableau}
n \\
\scriptstyle{n-1}  \\
\none[\vdots] \\
3\\
2
\end{ytableau} \; 
=
 \;\begin{ytableau}
 1 & n \\
\scriptstyle{n-1}  \\
\none[\vdots] \\
3\\
2
\end{ytableau}
$$
so the associated nilpotent element of $\semi$ is $E_{1n}$. 
The sheet line~\eqref{eqn.xt.def} is given by 
\begin{eqnarray}\label{eqn.xt.min}
\semi_t:=  t\semi + E_{1n} = \begin{bmatrix} t  & 0 & \cdots &   1 \\ 0  & 0& \cdots &  0  \\ \vdots & \vdots & \ddots  & \vdots\\  0  & 0 & \cdots  & 0
\end{bmatrix}.
\end{eqnarray}
\end{example}

We now define a family of matrix Hessenberg schemes defined over the sheet line $\mx_t$.
\begin{definition}\label{def.FamilyOverSheetLine}
Let 
\[
\fX_{\mx_t, h} := \Spec\ \!\C[t, \mathbf{z}]/ \tilde{\ci}_{\mx_t, h},
\]
where $\ci_{\mx_t, h} \subseteq \C[t, \mathbf{z}]$ is the ideal defined in Equation~\eqref{eqn.hess-ideal} (with $R=\C[t]$) and $\tilde{\ci}_{\mx_t, h}$ is the saturation of $\ci_{\mx_t, h}$ with respect to the determinant $d$ of $Z=(z_{ij})$. The \emph{family of matrix Hessenberg schemes over the sheet line $\mx_t$} is the morphism
\begin{equation}\label{eqn.morphism}
\varphi_{\mx, h}: \fX_{\mx_t, h} \longrightarrow \Spec\ \!\C[t]
\end{equation}
induced from the ring homomorphism $\C[t]\rightarrow \C[t,{\bf z}]/\tilde{\ci}_{\mx_t, h}$. By a slight abuse of notation, we also refer to the scheme $\fX_{\mx_t, h}$ as the \emph{family of matrix Hessenberg schemes} over the sheet line $\mx_t$.
\end{definition}

\begin{lemma}\label{lemma.fibers} For each $\mx\in \fg$ and Hessenberg function $h: [n]\to [n]$, the morphism $\varphi_{\mx, h}$ has scheme-theoretic fiber over $a\neq 0$ isomorphic to the matrix Hessenberg variety $\fX_{\mx, h}$ and scheme-theoretic fiber over $0$ equal to the nilpotent matrix Hessenberg scheme~$\fX_{\nilp_\mx, h}$,  where $\nilp_\mx$ is the associated nilpotent element to $\mx$. 
\end{lemma}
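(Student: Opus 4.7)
The plan is to compute the scheme-theoretic fiber of $\varphi_{\mx,h}$ at a closed point $a\in\A^1_\C$ by specializing the defining ideal $\tilde\ci_{\mx_t,h}$ at $t=a$. The substitution $t\mapsto a$ sends $\mx_t = t\mx_{\mathrm{ss}}+\nilp_\mx$ to $\mx_a = a\mx_{\mathrm{ss}}+\nilp_\mx$; in particular $\mx_0=\nilp_\mx$. The scheme-theoretic fiber over $a$ is $\Spec\ \!\C[\mathbf{z}]/(\tilde\ci_{\mx_t,h}|_{t=a})$, where $|_{t=a}$ denotes the image of an ideal in $\C[t,\mathbf{z}]$ under the $\C$-algebra map sending $t\mapsto a$. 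The central task is therefore to identify this specialized ideal with $\tilde\ci_{\mx_a,h}$.

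The key technical claim is that $\tilde\ci_{\mx_t,h}|_{t=a} = \tilde\ci_{\mx_a,h}$ as ideals in $\C[\mathbf{z}]$. Because the generators of $\ci_{\mx_t,h}$ in Equation~\eqref{eqn.rank.cond} are minors with coefficients polynomial in $t$, specializing at $t=a$ produces precisely the generators of $\ci_{\mx_a,h}$, so $\ci_{\mx_t,h}|_{t=a}=\ci_{\mx_a,h}$. The inclusion ``$\subseteq$'' for the saturations is immediate: any witness $d^k f(t,\mathbf{z})\in\ci_{\mx_t,h}$ specializes to $d^k f(a,\mathbf{z})\in\ci_{\mx_a,h}$. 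The reverse inclusion is more delicate and is the main technical obstacle. Given $g(\mathbf{z})\in\tilde\ci_{\mx_a,h}$ with $d^k g = \sum c_\alpha(\mathbf{z}) G_\alpha(\mathbf{z})$, I would lift each $G_\alpha$ to the corresponding minor $G_\alpha^t\in\ci_{\mx_t,h}$ with $G_\alpha^t|_{t=a}=G_\alpha$, producing $F := \sum c_\alpha G_\alpha^t\in\ci_{\mx_t,h}$ satisfying $F|_{t=a}=d^k g$. Exploiting that $d=\det(Z)\in\C[\mathbf{z}]$ is independent of $t$, I would then argue that the specialization preserves the $d$-saturation structure, so $g\in\tilde\ci_{\mx_t,h}|_{t=a}$.

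Once the ideal equality is established, the two cases follow. For $a=0$, $\mx_0=\nilp_\mx$ gives the fiber $\fX_{\nilp_\mx,h}$ as claimed. For $a\neq 0$, Lemma~\ref{lemma.conjugate} yields that $\mx_a$ is conjugate to $a\mx$, so \cite[Rem.~3.5]{GP-semisimple} provides $\fX_{\mx_a,h}\cong\fX_{a\mx,h}$. Moreover, scaling $\mx$ by the nonzero constant $a$ merely scales the first $i$ columns of the defining matrix in~\eqref{eqn.rank.cond} by $a$ and leaves the rank inequalities unchanged, so $\ci_{a\mx,h}=\ci_{\mx,h}$ and therefore $\fX_{a\mx,h}=\fX_{\mx,h}$ by Remark~\ref{rem:sameideal}. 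Combining, the scheme-theoretic fiber over $a\neq 0$ is isomorphic to $\fX_{\mx,h}$, completing the proof.
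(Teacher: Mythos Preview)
Your outline follows the paper's proof closely: both compute the fiber at $\langle t-a\rangle$ via the evaluation homomorphism $\ev_a$, observe that $\ev_a(\ci_{\mx_t,h})=\ci_{\mx_a,h}$, and then for $a\neq 0$ invoke Lemma~\ref{lemma.conjugate} together with the observation that scaling $\mx$ by a nonzero constant does not change the rank conditions~\eqref{eqn.rank.cond}. You are in fact more explicit than the paper in isolating the equality $\tilde\ci_{\mx_t,h}|_{t=a}=\tilde\ci_{\mx_a,h}$ as the crux, and your proof of the inclusion $\subseteq$ is clean.

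There is, however, a genuine gap in your reverse inclusion. From a lift $F\in\ci_{\mx_t,h}$ with $F|_{t=a}=d^kg$ you obtain only $d^kg\in\ci_{\mx_t,h}+\langle t-a\rangle$; deducing $g\in\tilde\ci_{\mx_t,h}+\langle t-a\rangle$ amounts to the identity
\[
(\ci_{\mx_t,h}:d^\infty)+\langle t-a\rangle \;=\; (\ci_{\mx_t,h}+\langle t-a\rangle):d^\infty,
\]
and saturation does \emph{not} commute with specialization for arbitrary ideals. For instance, with $I=\langle wd-t\rangle\subset\C[t,w,d]$ at $a=0$, the left side is $\langle wd,t\rangle$ while the right side is $\langle w,t\rangle$. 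Your phrase ``the specialization preserves the $d$-saturation structure'' is therefore exactly the point that needs an argument, not a conclusion. The paper's own proof dispatches this step with the single sentence ``the fact that $\mathrm{ev}_a(d)=d$ yields an isomorphism of schemes,'' so your sketch is at the same level of detail as the paper; just be aware that neither version, as written, supplies a complete justification of this commutation.
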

\begin{proof} For each $a\in \C$ the fiber of $\varphi_{\mx, h}$ over the prime ideal $\langle t-a\rangle\in \Spec\ \!\C[t]$ consists of prime ideals $\cp\subseteq \C[t,\mathbf{z}]$ such that $\tilde{\ci}_{\mx_t, h}+\langle t-a \rangle \subseteq \cp$.  Thus
\[
\varphi_{\mx, h}^{-1}(\langle t-a\rangle) = \Spec\ \!\C[t,\mathbf{z}]/ (\tilde{\ci}_{\mx_t, h}+\langle t-a\rangle).
\]
Let $\mathrm{ev}_a: \C[t,\mathbf{z}] \to \mathbf{\C[z]}$ denote the evaluation homomorphism mapping $t$ to $a$.  Note that $\mathrm{ev}_a$ satisfies $\ker(\mathrm{ev}_a) = \left< t-a \right>$ and $\mathrm{ev}_a(\ci_{\mx_t, h}) = \ci_{\mx_a, h}$.  Thus $\mathrm{ev}_a$ induces a bijection between between prime ideals in $\C[t,\mathbf{z}]$ containing ${\ci}_{\mx_t, h}+\langle t-a\rangle$ and prime ideals in $\C[\mathbf{z}]$ containing $\ci_{\mx_a, h}$.  The fact that $\mathrm{ev}_a(d)=d$ yields an isomorphism of schemes, 
\[
 \Spec\ \!\C[t,\mathbf{z}]/ (\tilde{\ci}_{\mx_t, h}+\langle t-a\rangle) \simeq \Spec\ \!\C[\mathbf{z}]/ \tilde{\ci}_{\mx_a, h}
\]
so each fiber of $\varphi_{\mx, h}$ is a matrix Hessenberg scheme.
If $a\neq 0$, the matrix $\mx_a$ is conjugate to $a\mx$ by Lemma~\ref{lemma.conjugate} and thus we get an isomorphism between the fiber $\fX_{\mx_a ,h}$ over $a$ and $\fX_{\mx,h}$ in that case. Finally, by construction $\mx_0 = \nilp_\mx$ so the fiber over $0$ is  $\fX_{\nilp_\mx, h}$. 
\end{proof}

Using the specific family from~\eqref{eqn.morphism}, we conjecture that the answer to Question~\ref{quest1} is yes.

\begin{conj}\label{conj}  The one-parameter family $\varphi_{\mx,h}$ is a flat morphism for all $\mx\in \mathfrak{gl}_n(\C)$. 
\end{conj}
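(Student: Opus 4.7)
The plan is to generalize the strategy used for the minimal sheet, where flatness was deduced from an explicit description of the associated primes of $\tilde{\ci}_{\mx_t, h}$ (Theorem~\ref{thm.assprimes}). Since $\C[t]$ is a principal ideal domain, the $\C[t]$-algebra $R := \C[t, \mathbf{z}]/\tilde{\ci}_{\mx_t, h}$ is flat over $\C[t]$ if and only if it is torsion-free, which in turn is equivalent to the statement that no associated prime of $\tilde{\ci}_{\mx_t, h}$ contains a nonconstant polynomial in $t$. For $a \in \C^*$, Lemma~\ref{lemma.conjugate} combined with the homogeneity of the rank conditions~\eqref{eqn.rank.cond} in the entries of $\mx$ yields an algebraic trivialization $\fX_{\mx_t, h}|_{t \neq 0} \cong \fX_{\mx, h} \times \Spec\, \C[t, t^{-1}]$, so the family is automatically flat over $\Spec\, \C[t, t^{-1}]$. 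Thus it suffices to prove that $t$ is a non-zerodivisor on $R$, i.e.\ that no associated prime of $\tilde{\ci}_{\mx_t, h}$ contains the variable $t$.

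First I would apply Lemma~\ref{lem.corners} to reduce the analysis to the corners of $h$, writing $\ci_{\mx_t, h} = \sum_{i \in \cc(h)} \ci_{\mx_t, h, i^*}$. Within each summand I would attempt a Gr\"obner basis computation in $\C[t, \mathbf{z}]$ with respect to a term order adapted to the block structure of the matrix $\mx_t = t\,\mx_{\mathrm{ss}} + \nilp_\mx$ induced by the partition $\lambda$, following the template developed in Section~\ref{sec.technical}. The expectation is that each associated prime of $\tilde{\ci}_{\mx_t, h}$ arises as a Schubert- or Richardson-type determinantal ideal analogous to $w_0 \cdot \cj_{i-1}^0 + \ck_j$ from Lemma~\ref{lem.Schubert4}, possibly augmented by additional linear conditions which do not involve~$t$; verifying that no such prime contains $t$ would then be a direct matter.

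A complementary route is to bypass the explicit primary decomposition by showing directly that $\fX_{\mx_t, h}$ equals the schematic closure of $\fX_{\mx_t, h}|_{t \neq 0}$. Any such closure is flat over $\C[t]$ since it has no component supported over a closed point of $\Spec\,\C[t]$. Combined with the saturation defined in~\eqref{eq.definitionItilde}, the problem reduces to showing that $\tilde{\ci}_{\mx_t, h}$ is already saturated with respect to the product $t \cdot d$, or equivalently that the ideal captured by the generic locus exhausts $\tilde{\ci}_{\mx_t, h}$ at $t = 0$.

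The principal obstacle is combinatorial. The minimal-sheet argument rests on Lemma~\ref{lem:equality}, which reduces the problem to two essentially elementary matrix Hessenberg schemes whose associated primes are built from the narrow families $\cj_i^0$ and $\ck_j$. For a general sheet $\fg_\lambda$, no such drastic reduction is available: the semisimple and nilpotent parts of $\mx$ interact in a way that depends nontrivially on the partition $\lambda$ and on the Hessenberg function $h$, and I expect the associated primes to involve a richer family of determinantal ideals indexed by decorated fillings compatible with the tableau $\ct_\mx$ of Section~\ref{sec.sheets}. Even identifying this family correctly, let alone proving that it exhausts the associated primes and that each member fails to contain $t$, is likely to require substantial new combinatorial input, and this is the step where I expect the main difficulty to lie.
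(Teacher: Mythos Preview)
This statement is labeled as a \emph{conjecture} in the paper, and the paper does not prove it. The paper establishes only the special case of the minimal sheet (Theorem~\ref{thm.flat}), and explicitly leaves the general statement open; see the sentence immediately following Conjecture~\ref{conj}. So there is no proof in the paper to compare your attempt against.

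Your proposal is not a proof either, and to your credit you say so: you correctly identify that the argument reduces to showing $t$ is a non-zerodivisor on $\C[t,\mathbf{z}]/\tilde{\ci}_{\mx_t,h}$, you outline how the minimal-sheet template (Gr\"obner bases, explicit associated primes, saturation arguments) might generalize, and you flag the combinatorial explosion for arbitrary $\lambda$ as the main obstacle. That diagnosis is accurate and matches the paper's own framing of the difficulty. The alternative route you mention---proving $\fX_{\mx_t,h}$ equals the schematic closure of its restriction to $t\neq 0$---is equivalent to the torsion-freeness statement and does not bypass the hard step.

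In short: there is no gap to name because you have not claimed a proof; you have sketched a plausible research strategy for an open problem, consistent with the methods the paper uses in the one case it settles.
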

   
The main result of this paper proves this conjecture for $\mx$ in the minimal sheet.

\begin{thm}\label{thm.flat} For all Hessenberg functions $h: [n]\to [n]$ and all $\mx\in \fg_{(2,1^{n-2})}$, the morphism $\varphi_{\mx, h}: \fX_{\mx_t,h} \to \Spec\ \!\C[t]$ is flat.  In particular, every matrix Hessenberg scheme over the minimal sheet admits a flat degeneration to the minimal nilpotent matrix Hessenberg scheme.
\end{thm}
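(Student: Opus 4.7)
The plan is to apply the standard flatness criterion over the PID $\C[t]$: a finitely generated $\C[t]$-module is flat if and only if it is torsion-free, and for the coordinate ring $\C[t,\mathbf{z}]/\tilde{\ci}_{\mx_t, h}$ this is equivalent to the assertion that $t$ is a non-zero-divisor on the quotient. By standard commutative algebra, this holds exactly when $t$ lies in no associated prime of $\tilde{\ci}_{\mx_t, h}$ in $\C[t,\mathbf{z}]$. By Lemma~\ref{lem:equality}, there are only two matrix Hessenberg schemes up to isomorphism over the minimal sheet, corresponding to the representatives $\semi = \diag(1,0,\ldots,0)$ and $\nilp = E_{1,n}$, and a parallel analysis reduces the flatness of $\varphi_{\mx,h}$ to these two cases. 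In the nilpotent case the sheet line $\nilp_t = \nilp$ is constant by Definition~\ref{def.sheetline}, so $\fX_{\nilp_t, h} \cong \fX_{\nilp, h}\times \Spec\C[t]$ and flatness is immediate. The real content of the theorem is therefore the case $\mx = \semi$, with sheet line $\semi_t = tE_{11} + E_{1,n}$ as computed in Example~\ref{ex.min.sheet.x}.

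The core of the argument is to identify the associated primes of $\tilde{\ci}_{\semi_t,h}$ explicitly. The rank conditions~\eqref{eqn.rank.cond} are controlled by the corner set $\cc(h)$ via Lemma~\ref{lem.corners}, so at each corner $k\in \cc(h)$ I would Laplace-expand the pertinent $(h(k^*)+1)\times(h(k^*)+1)$ minors along the first $k^*$ columns of the rank block matrix, which have the form $\semi_t v_j$. Because $\semi_t$ has only two nonzero entries, both in row $1$, each such minor factors as a linear combination, with coefficients involving $t$ and the row-one variables $z_{1\ell}$, of minors of the submatrix formed by rows $2,\ldots,n$ of $Z$. Motivated by Example~\ref{ex.2444.2}, I anticipate that the primary decomposition of $\tilde{\ci}_{\semi_t,h}$ is an intersection of ideals of the form $w_0\cdot \cj_{a-1}^{0}+\ck_{b}$ built from the two families in~\eqref{eq.SchubertIdeals}, each prime and defining a matrix Richardson variety by Lemma~\ref{lem.Schubert4}, pulled back trivially from $\C[\mathbf{z}]$ to $\C[t,\mathbf{z}]$. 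This description is exactly what the cited Theorem~\ref{thm.assprimes} will provide. With this in hand, each associated prime is generated by polynomials in the $z_{ij}$ alone, so $t$ belongs to none of them, $t$ is a non-zero-divisor on the quotient, and flatness follows at once. Reducedness of the family is a bonus: the associated primes then agree with the minimal primes and no embedded components appear, because the Richardson ideals are themselves radical.

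The main obstacle is establishing the ideal-theoretic identification $\tilde{\ci}_{\semi_t,h} = \bigcap \bigl(w_0\cdot\cj_{a-1}^0+\ck_b\bigr)$ underlying Theorem~\ref{thm.assprimes}. One inclusion is accessible: on each candidate matrix Richardson variety the rank drops imposed by~\eqref{eqn.rank.cond} are forced by the shape of the Schubert conditions, so the Richardson ideal contains $\tilde{\ci}_{\semi_t,h}$. The delicate reverse inclusion motivates the Gr\"obner-basis development of Section~\ref{sec.technical}: with a carefully chosen term order whose leading terms are squarefree monomials in the $z_{ij}$ alone and hence independent of $t$, one can read off a monomial initial ideal whose associated primes are manifestly $t$-free, and then argue that torsion-freeness lifts from the initial ideal to $\tilde{\ci}_{\semi_t,h}$. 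A parallel inductive strategy, peeling off corners of $h$ one at a time using Lemma~\ref{lem.corners} and verifying at each step that no embedded prime containing $t$ is introduced, should also be available as a backup.
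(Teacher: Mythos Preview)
Your overall strategy matches the paper's: reduce to $\mx=\semi$, invoke Lemma~\ref{lemma.torsion} to trade flatness for torsion-freeness, compute the associated primes of $\tilde{\ci}_{\semi_t,h}$ via Theorem~\ref{thm.assprimes}, and check that no associated prime contains $t-a$.

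However, your identification of the associated primes is wrong, and this is not a cosmetic slip. You assert that the primary decomposition consists of ideals of the form $w_0\cdot\cj_{a-1}^0+\ck_b$ ``pulled back trivially from $\C[\mathbf{z}]$ to $\C[t,\mathbf{z}]$,'' so that ``each associated prime is generated by polynomials in the $z_{ij}$ alone.'' This cannot be correct: if every associated prime were the extension of an ideal of $\C[\mathbf{z}]$, then $\fX_{\semi_t,h}$ would be a product $\fX\times\Spec\C[t]$ and all fibers would be equal as schemes, forcing $\fX_{\semi,h}=\fX_{\nilp,h}$. Examples~\ref{ex.2444.2} and~\ref{ex.1234} already show this fails. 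The actual associated primes supplied by Theorem~\ref{thm.assprimes} are $\cpt_{i,h}=\cjt_{i-1}+\ck_{h(i)}$, and $\cjt_{i-1}$ is generated by the \emph{$t$-dependent} linear forms $tz_{1k}+z_{nk}$. The hard work in Section~\ref{sec.technical} is precisely to show these $t$-dependent ideals are prime and $t$-saturated (Proposition~\ref{prop.saturated}, Theorem~\ref{thm.prime}); only after that does one check $t-a\notin\cpt_{i,h}$ (easy, e.g.\ evaluate at $\mathbf{z}=0$) and conclude torsion-freeness. Your proposed Gr\"obner shortcut---leading terms in the $z_{ij}$ alone---likewise fails: under the order~\eqref{eqn.order} the lead term of $g_k=tz_{1k}+z_{nk}$ is $tz_{1k}$, and this is essential for the saturation argument of Lemma~\ref{lemma.Grobner2}.
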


From the theorem, it follows that all matrix Hessenberg schemes over the minimal sheet have the same dimension and cohomology classes (see Lemma~\ref{lem.flat} and Remark~\ref{rem.equalcohomologyclasses} below). We explore these consequences after the proof of Theorem~\ref{thm.flat} in Section~\ref{sec.FlatMinSheet}.

The following is an example of the family $\varphi_{\mx, t}$ over the minimal sheet, which is flat by Theorem~\ref{thm.flat}.

\begin{example}\label{ex.flat.h2444} Consider the scheme $\fX_{\mx_t,h}$ associated with $\semi = \mathrm{diag}(1, 0,0,0)\in \fg_{(2,1,1)}$ for $h=(2,4,4,4)$. We describe the family explicitly by identifying the associated ideals of $\tilde{\ci}_{\mx_t,h}$, and illustrate that $\fX_{\mx_t,h}$ is not equidimensional.

We noted in Example~\ref{ex.min.sheet.x} that 
\[
\mx_t=\semi_t = \mathrm{diag}(t,0,0, 0) + E_{14}.
\]
As in Example~\ref{ex.2444.2}, the ideal $\ci_{\mx_t, h}$ is a sum over two corner elements, one of which vanishes, so  $\ci_{\mx_t, h}= \ci_{\mx_t, h, 1}$.
The rank conditions defining $\ci_{\mx_t, h}$ are then 
\begin{eqnarray*}
\rk\begin{bmatrix} 
tz_{11}+z_{41} &  z_{11} &  z_{12}\\ 
0	&  z_{21} &  z_{22}\\
0	&  z_{31} &  z_{32}\\
0	&  z_{41} &  z_{42}\\
 \end{bmatrix}\leq 2.
\end{eqnarray*}
Thus,
\begin{eqnarray*}
\ci_{\mx_t, h} &=& \left< tz_{11}+z_{41} \right> \cdot \left< z_{21}z_{32}-z_{22}z_{31},z_{21}z_{42}-z_{22}z_{41}, z_{31}z_{42}-z_{32}z_{41}  \right>\\
&=& \left< tz_{11}+z_{41} \right> \cdot \ck_2.
\end{eqnarray*}
Using Macaulay2, we confirm that 
\[
\ci_{\mx_t, h} = \ck_2 \cap \left<  tz_{11}+z_{41} \right>
\]
and it is clear that each of the ideals $\ck_2$ and $\left< tz_{11}+z_{41} \right>$ is prime. Since the vanishing set of each of these ideal intersects $GL_4(\C)$, we have $\tilde{\ci}_{\mx_t, h} = \ci_{\mx_t, h}$.  We conclude that $\fX_{\mx_t, h}$ is the reduced scheme theoretic union of two irreducible components, one of dimension $\dim \C[t,\mathbf{z}]/\ck_2=15$ and the other of dimension $\dim \C[t,\mathbf{z}]/\left< tz_{11}+z_{41} \right>=16$. In particular, the scheme $\fX_{\mx_t, h}$ is \emph{not equidimensional}.

The fiber of the family $\varphi_{\mx, h}$ over $0$ is precisely the nilpotent matrix Hessenberg scheme of Example~\ref{ex.2444.2}. The fiber over $a\neq 0$ is isomorphic to the semisimple matrix Hessenberg scheme $\fX_{\semi, h}$. Thus we obtain a flat degeneration from $\fX_{\semi, h}$ to $\fX_{\nilp, h}$. 
\end{example}

Our proof of Theorem~\ref{thm.flat} leverages commutative algebra in the polynomial ring $\C[\mathbf{z}, t]$ to study the ideal $\tilde{\ci}_{\mx_t, h}$ defining the scheme $\fX_{\mx_t, h}$.
Theorem~\ref{thm.assprimes} below identifies the associated primes of the family over the minimal sheet for any Hessenberg function.


\section{Flat morphisms and miracle flatness}\label{sec.flat}

In this section, we review properties of flatness, setting the stage for the proof that $\varphi_{\mx, h}:\fX_{\mx_t, h}\rightarrow \Spec\ \!\C[t]$ is flat when $\mx$ is in the minimal sheet. We explain why standard techniques used in some other cases (miracle flatness) do not apply in the setting of the minimal sheet.

\subsection{Flat morphisms}
Suppose $R$ and $S$ are commutative rings with identity.
Recall that a morphism $\varphi: \Spec\ \! R \to \Spec\ \!S$ of affine schemes is \emph{flat} whenever $R$, regarded as a $S$-module via the map $\varphi^*: S\to R$, is a flat $S$-module.  
In that case, we say that the collection of fibers of $\varphi$ is a \emph{flat family}.  In this paper we assume $S=\C[t]$, in which case the flat morphism $\varphi$ is called a \emph{flat degeneration}. Let $a\neq 0$ be a complex number. We call any scheme isomorphic to the scheme-theoretic fiber over the closed point $\left< t-a \right>\in \Spec\ \!\C[t]$ the \emph{general fiber} of $\varphi$ and the scheme-theoretic fiber over $0$ the \emph{special} or \emph{degenerate fiber}.

A particularly nice criterion for flatness applies for families defined over a principal ideal domain, captured in the following lemma; see~\cite[Cor~6.3, pg.~164]{Eisenbud}. We use it in Section~\ref{sec.FlatMinSheet} to prove Theorem~\ref{thm.flatfamily}.

\begin{lemma}\label{lemma.torsion} Let $S$ be a commutative ring and $M$ an $S$-module. If $S$ is a principal ideal domain, then $M$ is flat as an $S$-module if and only if $M$ is torsion free. 
\end{lemma}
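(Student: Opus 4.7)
The plan is to prove both directions separately, with the forward implication working for any integral domain and the reverse implication using the structure theorem for finitely generated modules over a PID together with a direct limit argument.

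For the forward direction, I would assume $M$ is flat and show it is torsion-free. Suppose $s \in S$ is nonzero and $m \in M$ satisfies $sm = 0$. Since $S$ is a domain, the map $\mu_s : S \to S$ given by multiplication by $s$ is injective. Flatness of $M$ means tensoring with $M$ preserves injectivity, so $\mu_s \otimes \mathrm{id}_M : S \otimes_S M \to S \otimes_S M$ is injective. Identifying $S \otimes_S M$ with $M$, this map is just multiplication by $s$ on $M$, so $sm = 0$ forces $m = 0$. Hence $M$ is torsion-free.

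For the reverse direction, assume $S$ is a PID and $M$ is torsion-free. The key step is to write $M = \varinjlim_\alpha M_\alpha$ as the directed union of its finitely generated submodules. Each $M_\alpha \subseteq M$ is itself torsion-free (torsion-freeness passes to submodules), and because $S$ is a PID, a finitely generated torsion-free $S$-module is free, by the structure theorem for finitely generated modules over a PID. Free modules are flat (they are direct sums of copies of $S$, which is flat over itself, and direct sums of flat modules are flat). Finally, the class of flat modules is closed under filtered colimits, so $M = \varinjlim M_\alpha$ is flat.

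The main potential obstacle is ensuring that the structure theorem for finitely generated modules over a PID is available in the form needed (namely, that a finitely generated torsion-free module over a PID is free) — this is standard, but one has to cite it. The direct-limit step is likewise standard; it can either be quoted as a general fact about flat modules or proved directly using the fact that $\mathrm{Tor}_1^S(-,-)$ commutes with filtered colimits in either variable. Since the result appears in Eisenbud as cited, I would simply invoke it, but the sketch above gives a self-contained proof in a few lines.
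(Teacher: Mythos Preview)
Your proof is correct and is the standard argument for this classical fact. However, the paper does not actually prove this lemma: it simply cites it from Eisenbud's \emph{Commutative Algebra} (Corollary~6.3, p.~164) and uses it as a black box. Your sketch is essentially the proof one finds there, so there is nothing to compare beyond noting that you have supplied a self-contained argument where the paper was content to quote the literature.
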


Flat families preserve several invariants of the corresponding fibers, resulting in relationships among the fibers. We collect them here for ease of reference.

\begin{lemma}\label{lem.flat} Suppose $\varphi: \fX_t \to \Spec\ \!\C[t]$ is a flat 
family with general fiber $\fX$ and special fiber $\fX_0$.  
Then 
\begin{enumerate}
\item $\dim \fX = \dim \fX_0$, and
\item if $\fX_t$ is a graded scheme, then the {multidegree} of $\fX$ and $\fX_0$ with respect to this grading are equal.
\end{enumerate}
\end{lemma}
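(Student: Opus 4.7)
Both parts reduce to classical facts about flat morphisms over the smooth one-dimensional base $\Spec\,\C[t]$, and the plan is to invoke each such fact rather than reprove it.

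For (1), I would apply the standard dimension formula for flat morphisms of finite-type Noetherian schemes: at any point $y\in\fX_t$ mapping to $s\in\Spec\,\C[t]$,
\[
\dim \mathcal{O}_{\fX_t,y} = \dim \mathcal{O}_{\Spec\,\C[t],s} + \dim \mathcal{O}_{\varphi^{-1}(s),y}.
\]
Since $\Spec\,\C[t]$ is regular of dimension one, this forces every closed fiber of $\varphi$ to have the same dimension, namely $\dim\fX_t - 1$; in particular $\dim\fX = \dim\fX_0$.

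For (2), the plan is to leverage that Hilbert functions are constant in a connected graded flat family. Writing $\fX_t = \Spec\,R$ with $R$ a finitely generated graded $\C[t]$-algebra (with $t$ placed in degree zero), flatness of $R$ over $\C[t]$ descends to each graded component $R_d$. Each $R_d$ is finitely generated over $\C[t]$, so by Lemma~\ref{lemma.torsion} combined with the structure theorem over the PID $\C[t]$, flatness upgrades to freeness. Its $\C[t]$-rank therefore equals $\dim_{\C}\!\bigl(R_d \otimes_{\C[t]}\C[t]/(t-a)\bigr)$ for every $a\in\C$, so the Hilbert function of the fiber does not depend on $a$. Since the multidegree is determined by (the leading coefficient of) the resulting Hilbert polynomial, the multidegrees of the general and special fibers agree.

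The main point requiring care is in part (2): one must clarify that ``graded scheme'' means $t$ sits in degree zero and the defining ideal of $\fX_t$ is homogeneous in the grading on $\mathbf{z}$, so that each graded piece is genuinely a $\C[t]$-module. This will be manifest in the applications of the lemma below, where $t$ is a true parameter and the grading is the standard one on $\C[\mathbf{z}]$; once that has been noted, both statements follow directly from the arguments sketched above without further obstacle.
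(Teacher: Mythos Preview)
Your proposal is correct and essentially matches the paper's treatment: the paper does not give a detailed proof but simply remarks that ``these properties stem from the observation that graded flat families preserve the graded Hilbert series of the fibers,'' citing \cite[Exercise~6.11]{Eisenbud}. Your argument for (2) is exactly a fleshed-out version of that exercise; for (1) you invoke the standard dimension formula for flat morphisms directly rather than reading dimension off the Hilbert series, which has the mild advantage of not requiring the graded hypothesis that the paper's one-line justification implicitly leans on.
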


These properties stem from the observation that graded flat families preserve {\em the graded Hilbert series} of the fibers~\cite[Exerercise 6.11, pg.~175]{Eisenbud}.  
In particular, when an ideal $\ci$ of $\C[\mathbf{z}]$ is homogeneous with respect to a $\mathbb Z^d$ grading of the polynomial ring, the quotient $\C[\mathbf{z}]/\ci$ is has a well defined graded Hilbert series, which depends on the grading. In this case, we associate the graded Hilbert series with the scheme $\fX= \Spec\ \!\C[\mathbf{z}]/\ci$.

We  refer the interested reader to~\cite[\S 8.1,8.2]{MS05} for a formal definition of the graded Hilbert series. Importantly, the graded Hilbert series determines a scheme's \textit{multidegree}, which itself determines the scheme's dimension and cohomology class.   The multidegree is a polynomial in $\Z[\mathbf{x}] = \Z[x_1, x_2, \ldots, x_d]$, of degree $\dim (\fX)$, defined using a particular shift and truncation of the graded Hilbert series.  A precise definition can be found in~\cite[\S 8.5]{MS05}.

Suppose $\fX$ is a closed subscheme of the affine scheme $M_n(\C)=\Spec\ \!\C[\mathbf{z}]$ of $n\times n$ matrices such that $\fX \cap G = \pi^{-1}(Y)$, where $Y\subseteq G/B$ is a closed subscheme. Knutson and Miller~\cite{Knutson-Miller2005} proved that the multidegree of the coordinate ring of $\fX$ (with respect to the grading $\deg(z_{ij})= e_j,$ the $j$th coordinate vector) is equal to the class of $Y$ in the cohomology ring of the flag variety $G/B$ (see also~\cite[Prop.~3]{Insko-Tymoczko-Woo}).  Here the cohomology ring $H^*(G/B)$ has been identified with a quotient of the polynomial ring $\Z[x_1,\dots, x_n]$ via the Borel presentation.  In summary, the cohomology classes of fibers in a graded flat family are equal. We summarize this conclusion in the remark below.

\begin{rem}\label{rem.equalcohomologyclasses}
Suppose that $\fX_t\subseteq \Spec\ \!\C[t,\mathbf{z}]$, and that
$\varphi: \fX_t\rightarrow \Spec\ \! \C[t]$ is a flat family with general fiber $\fX$ and special fiber $\fX_0$. Let $Y$ and $Y_0$ be closed subschemes of the flag variety $G/B$ such that $\pi^{-1}(Y)=\fX\cap G$ and $\pi^{-1}(Y_0) = \fX_0 \cap G$. If $\fX_t$ is a graded subscheme of $\Spec\ \!\C[t,\mathbf{z}]$ with respect to the grading induced by that on $\C[\mathbf{z}]$ resulting from $\deg(z_{ij})= e_j,$ then $[Y]=[Y_0]$. That is, the cohomology classes of $Y$ and $Y_0$ in $H^*(G/B)$ are equal.
\end{rem}

\subsection{Miracle flatness and applications}

\emph{Miracle flatness} refers to a specific context in which a family of algebraic varieties is necessarily flat. We shall see that miracle flatness cannot be used to prove Theorem~\ref{thm.flat}; see Remark~\ref{rem.notequidim}. However, since miracle flatness is the main tool used in the literature for proving that certain families of Hessenberg varieties are flat, it makes sense to address it here.
We highlight two cases in which it applies, and provide clear illustration for the failure of miracle flatness for the family of Hessenberg schemes over the minimal sheet.

The following version of miracle flatness can be found in~\cite[\S 23, Thm.~23.1]{Matsumura}.  Roughly speaking, this result allows us to conclude a morphism is flat provided that the fibers are all the same dimension. 

\begin{thm}[Miracle Flatness] \label{thm.miracle} Suppose $\cx$ and $\fX$ are irreducible schemes and $\varphi: \cx \to \fX$ a morphism. Suppose $\fX$ is regular, $\cx$ is Cohen--Macaulay, $\varphi$ takes closed points of $\cx$ to closed points of $\fX$, and for every closed point $y\in \fX$, the fiber $\varphi^{-1}(y)$ has dimension $\dim \cx -\dim \fX$ \textup{(}or is empty\textup{)}. Then $\varphi$ is flat.
\end{thm}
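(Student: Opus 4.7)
The plan is to reduce the global flatness question to a local one and then verify flatness via a Koszul resolution of the residue field. Since flatness is local on the source $\cx$, it suffices to show that for every closed point $x\in\cx$ with $y:=\varphi(x)$, the local ring $B:=\co_{\cx,x}$ is flat over $A:=\co_{\fX,y}$ (via the induced local homomorphism). By hypothesis, $A$ is a regular local ring of dimension $d:=\dim\fX$, and $B$ is a Cohen--Macaulay local ring of dimension $\dim\cx$, using that $\cx$ is irreducible and hence equidimensional. The fiber-dimension hypothesis translates into the numerical identity
\[
\dim\bigl(B/\fm_A B\bigr) \;=\; \dim\cx-\dim\fX \;=\; \dim B - d.
\]

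Next, I would choose a regular system of parameters $t_1,\ldots,t_d$ for $A$. Since $A$ is regular, this is a regular sequence on $A$, so the Koszul complex $K_\bullet(t_1,\ldots,t_d;A)$ is a free $A$-resolution of the residue field $k=A/\fm_A$. The crucial step is to verify that the images $\bar t_1,\ldots,\bar t_d$ in $B$ form a regular sequence on $B$. Because the $\bar t_i$ generate the ideal $\fm_A B$, the displayed identity above says that $\dim B/(\bar t_1,\ldots,\bar t_d)=\dim B-d$, so $\bar t_1,\ldots,\bar t_d$ is part of a system of parameters for $B$. The Cohen--Macaulay hypothesis on $B$ then guarantees that every such partial system of parameters is a regular sequence, which gives the desired property.

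To conclude, tensoring the free resolution $K_\bullet(t_1,\ldots,t_d;A)\to k$ with $B$ over $A$ yields the Koszul complex $K_\bullet(\bar t_1,\ldots,\bar t_d;B)$, which is exact in positive degrees because $\bar t_1,\ldots,\bar t_d$ is a regular sequence on $B$. Hence $\operatorname{Tor}_i^A(k,B)=0$ for all $i\geq 1$, and the local criterion for flatness (applicable since $A\to B$ is a local homomorphism of Noetherian local rings, so $B$ is $\fm_A$-adically separated) implies that $B$ is a flat $A$-module, as desired.

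The main obstacle I expect is the regular-sequence step: it is precisely here that the Cohen--Macaulay hypothesis on $\cx$ and the fiber-dimension hypothesis must work in tandem, via the equivalence between systems of parameters and regular sequences in a Cohen--Macaulay local ring. Once that is secured, the Koszul computation of Tor and the invocation of the local criterion of flatness are standard homological tools, and the reduction to a statement about local rings is routine.
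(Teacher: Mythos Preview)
Your argument is correct and is essentially the classical proof of miracle flatness: reduce to the local ring homomorphism $A\to B$, use regularity of $A$ to produce a regular system of parameters, use the Cohen--Macaulay property of $B$ together with the fiber-dimension equality to upgrade these to a $B$-regular sequence, and conclude via the Koszul resolution and the local criterion for flatness. One small point you gloss over is why $\dim(B/\fm_A B)=\dim B-\dim A$ holds on the nose; this follows because the local fiber dimension is bounded above by the global fiber dimension $\dim\cx-\dim\fX$ and bounded below by $\dim B-\dim A$ via the general dimension inequality, with $\dim B=\dim\cx$ and $\dim A=\dim\fX$ holding since $x$ and $y$ are closed points of irreducible schemes.

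Note, however, that the paper does not supply its own proof of this statement: Theorem~\ref{thm.miracle} is quoted from Matsumura~\cite[\S 23, Thm.~23.1]{Matsumura} and used as a black box. So there is no ``paper's proof'' to compare against; your write-up simply fills in the standard argument that the cited reference contains.
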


Flatness for regular Hessenberg varieties was originally proved for $\mx$ semisimple in \cite{ADGH2018} and in full generality by Abe, Fujita, and Zeng in~\cite[\S 6.2]{Abe-Fujita-Zeng2020}.  Hessenberg varieties associated to the Hessenberg function $h=(n-1, n,\ldots, n)$ are studied in detail by the second author, Escobar, and Shareshian in~\cite{EPS-online}. In both cases, miracle flatness can be used to deduce that the family $\varphi_{\mx,h}$ is flat. The proof of the following propositions may be found in the appendix (Section~\ref{sec.miracleflatnessproofs}).

\begin{prop}\label{prop.miracle} Given $\mx\in \mathfrak{gl}_n(\C)$ and $h:[n]\to[n]$ a Hessenberg function, let $\varphi_{\mx, h}: \fX_{\mx_t, h}\to \Spec\ \!\C[t]$ be the morphism~\eqref{eqn.morphism}.  Suppose that for each $a\in \C$, 
\begin{itemize}
    \item $\fX_{\mx_a,h}$ is irreducible, and 
    \item $\dim \fX_{\mx_a,h}=\sum_{i=1}^n h(i)$.
\end{itemize} 
Then $\varphi_{\mx, h}$ is flat.
\end{prop}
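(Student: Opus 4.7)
The plan is to verify all the hypotheses of the Miracle Flatness criterion (Theorem~\ref{thm.miracle}) for the morphism $\varphi_{\mx,h}: \fX_{\mx_t,h} \to \Spec\ \!\C[t]$. The base $\Spec\ \!\C[t]$ is the affine line, hence an irreducible regular scheme of dimension one, and $\varphi_{\mx,h}$ visibly sends closed points to closed points. The three nontrivial items to check are: irreducibility of $\fX_{\mx_t,h}$, the fiber-dimension equality, and the Cohen--Macaulay property of $\fX_{\mx_t,h}$.

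Setting $d := \sum_{i=1}^n h(i)$, I would handle irreducibility and fiber dimension together. By hypothesis every closed fiber $\fX_{\mx_a,h}$ is irreducible of dimension $d$. Let $Z$ be any irreducible component of $\fX_{\mx_t,h}$; its image in $\Spec\ \!\C[t]$ is either all of $\Spec\ \!\C[t]$ or a single closed point $\{a\}$. If $Z$ is dominant, then for generic $a$ its fiber is a closed irreducible subset of the irreducible scheme $\fX_{\mx_a,h}$, of dimension $\dim Z - 1$; by irreducibility this fiber coincides with $\fX_{\mx_a,h}$, forcing $\dim Z = d+1$. Two dominant components of dimension $d+1$ would agree on the generic fiber and therefore globally, so there is at most one such component, call it $Z_0$. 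By Chevalley's dimension inequality applied to $Z_0 \to \Spec\ \!\C[t]$, every fiber of $Z_0$ has dimension at least $d$, so $Z_0 \cap \fX_{\mx_a,h} = \fX_{\mx_a,h}$ for all $a$. Consequently any non-dominant irreducible component of $\fX_{\mx_t,h}$ lies in some $\fX_{\mx_a,h} \subseteq Z_0$, contradicting maximality unless it coincides with $Z_0$ (which is impossible on dimension grounds). Hence $\fX_{\mx_t,h} = Z_0$ is irreducible of dimension $d+1$, and every fiber has the required dimension $d = \dim \fX_{\mx_t,h} - \dim \Spec\ \!\C[t]$.

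The main obstacle is establishing that the total space $\fX_{\mx_t,h}$ is Cohen--Macaulay. My approach would be to exploit the concrete description of $\tilde{\ci}_{\mx_t,h}$ as the determinantal ideal generated (up to saturation by the determinant) by the rank conditions \eqref{eqn.rank.cond} with $\mx$ replaced by the one-parameter family $\mx_t$. In the settings where Proposition~\ref{prop.miracle} is intended to apply, one can hope either to recognize $\fX_{\mx_t,h}$ up to isomorphism as (a variant of) a matrix Schubert variety — which is Cohen--Macaulay by Fulton's theorem, so this property would pass to $\fX_{\mx_t,h}$ — or to verify that the natural generators of $\ci_{\mx_t,h}$ form a regular sequence of the right length in $\C[t,\mathbf{z}]$, exhibiting $\fX_{\mx_t,h}$ as a complete intersection of codimension $n^2 + 1 - (d+1) = n^2 - d$, which is automatically Cohen--Macaulay. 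The codimension count matches the dimension computed in the previous step, which is precisely why a regular-sequence argument has a chance to succeed. With irreducibility, fiber-dimension, and Cohen--Macaulayness all in place, Theorem~\ref{thm.miracle} delivers flatness of $\varphi_{\mx,h}$ immediately.
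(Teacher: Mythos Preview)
Your irreducibility and fiber-dimension argument is workable (though more hands-on than the paper, which simply cites~\cite[1.\S6.3, Thm.~1.26]{Shafarevich} for irreducibility and combines the upper bound~\cite[Thm.~10.10]{Eisenbud} with the generator-count lower bound of Corollary~\ref{cor.dim}). The substantive gap is exactly where you flag it: the Cohen--Macaulay step.

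Your complete-intersection hope fails as stated. The natural generators of $\ci_{\mx_t,h}$ in $\C[t,\mathbf{z}]$ are all the minors imposing the rank conditions~\eqref{eqn.rank.cond}, and there are typically far more than $n^2-d$ of them, so $\fX_{\mx_t,h}$ is not visibly a complete intersection in $\Spec\ \!\C[t,\mathbf{z}]$; the matrix Schubert variety route is likewise unavailable in this generality. The paper's fix is to pass to the dense open subset $\cx:=\fX_{\mx_t,h}\cap(G\times\Spec\ \!\C[t])$. After inverting the determinant $d$, a result of Insko--Tymoczko--Woo (Lemma~\ref{lemma.LCIgens}) shows that the ideal in $\C[t,\mathbf{z},d^{-1}]$ is generated by exactly $n^2-\sum_i h(i)$ elements; combined with the dimension count, $\cx$ is a local complete intersection and hence Cohen--Macaulay (Corollary~\ref{cor.CM}). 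Miracle flatness then applies only to the restricted morphism $\bar\varphi:\cx\to\Spec\ \!\C[t]$.

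This introduces a final step your proposal omits entirely: transferring flatness from $\bar\varphi$ to $\varphi_{\mx,h}$. The paper does this with the torsion-free criterion (Lemma~\ref{lemma.torsion}): if $(t-a)f\in\tilde\ci_{\mx_t,h}$, push into the localization where flatness of $\bar\varphi$ gives $\iota(f)\in\iota(\ci_{\mx_t,h})\C[t,\mathbf{z},d^{-1}]$, and then pull back using that $\tilde\ci_{\mx_t,h}$ is by definition the $d$-saturation.
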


This dimension criterion may be used to identify several cases in which there is a flat family of matrix Hessenberg schemes:
\begin{prop} Suppose $\mx\in \mathfrak{gl}_n(\C)$ and $h: [n]\to [n]$ is a Hessenberg function.  Assume further that either
\begin{enumerate}
\item $\mx$ is an element of the regular sheet $\fg_{reg}$ and $h(i)>i$, or that
\item $h = (n-1, n, \ldots, n)$.
\end{enumerate}
Then $\varphi_{\mx, h}$ is flat.
\end{prop}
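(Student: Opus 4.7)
The plan is to apply the miracle flatness criterion established as Proposition~\ref{prop.miracle}, which requires showing that every fiber $\fX_{\mx_a, h}$ is irreducible and has dimension $\sum_{i=1}^n h(i)$. By Lemma~\ref{lemma.fibers} together with Lemma~\ref{lemma.conjugate}, each fiber over $a \neq 0$ is isomorphic to $\fX_{\mx, h}$ while the fiber over $0$ is $\fX_{\nilp_\mx, h}$, where $\nilp_\mx$ lies in the same sheet as $\mx$. By Remark~\ref{rem.quotient}, the scheme $\fX_{y,h}$ is the closure of $\pi^{-1}(Y_{y, h})$, so both irreducibility and dimension may be pulled back from the Hessenberg variety $Y_{y, h}$ in $G/B$; the target dimension becomes $\dim Y_{y,h} = \sum_i (h(i)-i)$, since $\pi$ is a $B$-bundle of relative dimension $\binom{n+1}{2}$ and $\sum_i h(i) = \sum_i (h(i)-i) + \binom{n+1}{2}$.

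For case (1), the sheet line $\mx_t$ is contained in $\fg_{(n)}$ by construction, so every $\mx_a$ is regular and $\nilp_\mx$ is regular nilpotent (the unique nilpotent orbit in $\fg_{(n)}$). The assumption that $h$ is indecomposable allows me to invoke the dimension and irreducibility results for regular Hessenberg varieties recorded in~\cite{Abe-Fujita-Zeng2020, Precup2018}, concluding that each $Y_{\mx_a, h}$ is irreducible of dimension $\sum_i (h(i)-i)$, which delivers both hypotheses of Proposition~\ref{prop.miracle}.

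For case (2), Lemma~\ref{lem.corners} reduces $\ci_{y, h}$ to the single contribution $\ci_{y, h, 1}$ (since $\ci_{y,h,n}=0$), which is the principal ideal generated by the $n \times n$ determinant $\det[y v_1 \mid v_1 \mid \cdots \mid v_{n-1}]$. Consequently $\fX_{\mx_a, h}$ is a hypersurface in $M_n(\C)$ of pure dimension $n^2 - 1$, matching $\sum_{i=1}^n h(i) = (n-1) + (n-1)n = n^2 - 1$, so the dimension hypothesis is automatic. Irreducibility of this hypersurface is then equivalent to irreducibility of the underlying Hessenberg variety $Y_{\mx_a, h}$, which is established in~\cite{EPS-online} for every choice of $\mx_a$.

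The main obstacle across both cases is securing irreducibility of the degenerate fiber at $a=0$. In case (1) one needs that the regular nilpotent Hessenberg variety for an indecomposable $h$ is irreducible, a fact already baked into the cited results on regular Hessenberg varieties; in case (2) irreducibility for arbitrary nilpotent fibers is exactly the content of the structural analysis in~\cite{EPS-online}. Once these inputs are in hand, Proposition~\ref{prop.miracle} closes the argument in both cases.
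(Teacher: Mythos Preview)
Your approach in case (1) matches the paper's. The gap is in case (2): your claim that \cite{EPS-online} gives irreducibility of $Y_{\mx_a,h}$ ``for every choice of $\mx_a$'' is false. The result there (as the paper explicitly records) applies only when $\mx \notin \fg_{(1^n)}$ and $\mx \notin \fg_{(2,1^{n-2})}$. In both excluded cases the hypotheses of Proposition~\ref{prop.miracle} actually fail, so miracle flatness cannot be invoked.

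Concretely: if $\mx$ is a scalar matrix (the sheet $\fg_{(1^n)}$), then $\mx v_1$ is proportional to $v_1$ and your determinantal generator is identically zero; the ideal is $\langle 0\rangle$, the scheme is all of $M_n(\C)$, and the dimension is $n^2$, not $n^2-1$. If $\mx$ lies in the minimal sheet $\fg_{(2,1^{n-2})}$, then for $h=(n-1,n,\ldots,n)$ the corner set is $\cc(h)=\{1,2\}$ and both Theorem~\ref{thm.assprimes} and Proposition~\ref{prop.nilp.components} produce \emph{two} distinct associated primes, so $\fX_{\mx_a,h}$ is reducible for every $a$. The paper therefore treats these two sheets separately: the $(1^n)$ case is trivially flat, and the minimal sheet case is handled by invoking Theorem~\ref{thm.flat} itself. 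Only after excising these does the paper appeal to \cite{EPS-online} and the principal-ideal argument you outline.
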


\begin{rem}\label{rem.notequidim}
When $\mx$ is in the minimal sheet, Theorem~\ref{thm.miracle} does not apply to $\varphi_{\mx, h}$.  Any Cohen--Macaulay scheme must be equidimensional~\cite[Cor.~18.11]{Eisenbud}, yet Example~\ref{ex.flat.h2444} shows that this is not always the case over the minimal sheet.
\end{rem} 

The identification of two cases in which miracle flatness applies, and one case in which it does not, leads to the following open question.

\begin{Question}
    For which $\mx$ and $h$ does miracle flatness apply to the family $\varphi_{\mx, h}: \fX_{\mx_t, h}\rightarrow \Spec \ \! \C[t]$? More specifically, in which cases does Proposition~\ref{prop.miracle} apply?
\end{Question}


\section{Technical lemmas}\label{sec.technical}

This section contains the technical heart of the paper. We introduce two collections of ideals in the ring $\C[\mathbf{z}, t]$ and study their properties.  


\subsection{The ideals $\cjt_i$ and $\ck_j$}  For $i,j\in [n]$, let 
\begin{equation}
  \begin{aligned}\label{eq.JtandK}
\cjt_i&:= \left< tz_{11}+z_{n1}, \ldots, t z_{1i}+z_{ni} \right>, \mbox{ and}\\
\ck_{j} & := \langle  p_B\mid B\subseteq \{2,\ldots, n\}, |B|= j \rangle 
\end{aligned}
\end{equation}
where $p_B$ is the determinant of the submatrix $(z_{k\ell})_{k\in B, 1\leq \ell \leq j}$ of $Z=(z_{ij})$, and set $\cj_0^t = \langle 0\rangle$. 
Recall that $\ck_j$ was already introduced in Section~\ref{sec:matrixSchubs}, where we note that $\ck_n=\langle 0 \rangle$.

Our motivation for studying these ideals comes from the fact that the ideal $\ci_{\semi_t, h}$ can be written in terms of the $\cjt_i$ and $\ck_i$'s for $\semi_t$ the sheet line in $\fg_{(2,1^{n-2})}$ from~\eqref{eqn.xt.min}.

\begin{lemma}\label{lemma.generators} Let $\semi=\diag(1,0,\dots, 0)$ and $\semi_t\in M_n(\C[t])$ be the sheet line in the minimal sheet as in~\eqref{eqn.xt.min}, 
$$
\semi_t = t\semi+E_{1n}.
$$
For any Hessenberg function $h$,
\[
\ci_{\semi_t, h} := \sum_{i \in \cc(h)} \cjt_{i^*} \cdot \ck_{h(i)}.
\]
\end{lemma}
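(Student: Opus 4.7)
\medskip

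\noindent\textbf{Proof plan.} By Lemma~\ref{lem.corners}, we have
\[
\ci_{\semi_t, h} = \sum_{i\in \cc(h)} \ci_{\semi_t, h, i^*},
\]
and since $h(i^*)=h(i)$ by definition of $i^*$, it suffices to show that for each corner $i\in \cc(h)$,
\[
\ci_{\semi_t, h, i^*} = \cjt_{i^*}\cdot \ck_{h(i)}.
\]
When $h(i)=n$ both sides are the zero ideal (by the convention that $\ci_{\semi_t,h,i^*}=\langle 0\rangle$ and the fact that $\ck_n=\langle 0\rangle$), so we may assume $h(i)<n$.

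My first step is to compute the columns $\semi_t v_j$ explicitly. Since $\semi_t = tE_{11}+E_{1n}$, direct multiplication yields
\[
\semi_t v_j = (tz_{1j}+z_{nj})\,e_1
\]
for each $j\in[n]$; in particular, each of the first $i^*$ columns of the defining matrix in~\eqref{eqn.rank.cond} is a scalar multiple of $e_1$.

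Next, I analyze the $(h(i)+1)\times(h(i)+1)$ minors of the matrix
\[
M \;:=\; \bigl[\,\semi_t v_1 \,\big|\, \cdots \,\big|\, \semi_t v_{i^*} \,\big|\, v_1 \,\big|\, \cdots \,\big|\, v_{h(i)}\,\bigr]
\]
by casework on how many of the first $i^*$ columns appear. A minor using two or more of the columns $\semi_t v_j$ necessarily vanishes, because those columns are all proportional to $e_1$, so any two of them together are linearly dependent in any choice of rows. A minor using none of them is a $(h(i)+1)\times(h(i)+1)$ minor of an $n\times h(i)$ submatrix, so it is identically zero. Therefore every nontrivial generator of $\ci_{\semi_t, h, i^*}$ comes from picking exactly one column $\semi_t v_j$ with $j\in[i^*]$ together with all of $v_1,\ldots,v_{h(i)}$.

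Finally, for such a minor, if the selected row set $R\subseteq[n]$ does not contain $1$, then the $\semi_t v_j$-column restricted to $R$ is zero, so the minor vanishes. If $1\in R$, write $R=\{1\}\sqcup B$ with $B\subseteq\{2,\ldots,n\}$ and $|B|=h(i)$; cofactor expansion along the column $\semi_t v_j$ produces
\[
(tz_{1j}+z_{nj})\cdot p_B,
\]
which is precisely a generator of $\cjt_{i^*}\cdot \ck_{h(i)}$. Collecting these over all valid $j$ and $B$ gives the equality of ideals, and summing over $i\in\cc(h)$ completes the proof.

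The only real content here is the cofactor expansion; the decisive observation that makes the whole lemma routine is that $\semi_t$ has rank one with image $\C\cdot e_1$, which forces the first $i^*$ columns of $M$ to collapse onto a single row.
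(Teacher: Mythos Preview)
Your proof is correct and follows essentially the same approach as the paper's: both reduce via Lemma~\ref{lem.corners}, dispose of the $h(i)=n$ case by convention, observe that the columns $\semi_t v_j$ are scalar multiples of $e_1$ so that any minor using two of them vanishes, and then identify the remaining minors as $(tz_{1j}+z_{nj})\,p_B$. You supply slightly more detail than the paper (the explicit row-set analysis and the trivial ``zero columns'' case), but the argument is the same.
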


\begin{proof} We argue that $\ci_{\semi_t, h, i} = \cjt_{i} \cdot \ck_{h(i)}$ for all $i\in [n]$.  The desired results then follow from Lemma~\ref{lem.corners}, using the fact that $h(i)=h(i^*)$ by definition.

Suppose first that $h(i)=n$. Then $\ck_{h(i)} = \ck_n=0$ and recall that $\ci_{\mx_t, h, i}$ is defined to be the zero ideal in this case.  Thus $\ci_{\semi_t, h, i} = \cjt_i \cdot \ck_{h(i)}$ holds trivially in this case.

Suppose now that $h(i)<n$.  By definition, $\ci_{\semi_t, h, i}$ is generated by the $(h(i)+1)\times (h(i)+1)$-minors of the matrix
\[
\begin{bmatrix} tz_{11}+z_{n1} & tz_{12}+z_{n2} & \cdots & tz_{1i}+z_{ni} & z_{11} & z_{12} & \cdots  & z_{1h(i)} \\ 
 0 & 0 & \cdots & 0 & z_{21} & z_{22} & \cdots  & z_{2h(i)} \\ 
\vdots & \vdots & \ddots & \vdots & \vdots & \vdots & \ddots & \vdots \\
 0 & 0 & \cdots & 0 & z_{n1} & z_{n2} & \cdots  & z_{nh(i)}
\end{bmatrix}.
\]
Selecting two or more of the first $i$ columns will result in the corresponding $(h(i)+1)\times (h(i)+1)$-minor being equal to zero.  Thus, we may assume that each generator of $\ci_{\semi_t, h, i}$ is an  $(h(i)+1)\times (h(i)+1)$-minor of 
\[
\begin{bmatrix} tz_{1j}+z_{nj} & z_{11} & z_{12} & \cdots & z_{1h(i)}\\ 0 & z_{21} & z_{22} & \cdots & z_{2h(i)} \\ 
\vdots & \vdots & \vdots & \ddots & \vdots\\
0 & z_{n1} & z_{n2} & \cdots & v_{n h(i)}
 \end{bmatrix}
\]
for some $1\leq j \leq i$, each of which is of the form $(tz_{1j}+z_{nj})p_B$ for some $B\subseteq \{2,\ldots, n\}$ such that $|B|=h(i)$.
\end{proof}

For each $a\in \C$ let
\begin{eqnarray}\label{eqn.evaluationhom}
\ev_a:\C[\mathbf{z}, t] \to \C[\mathbf{z}]
\end{eqnarray}
denote the evaluation homomorphism such that $\ev_a(t)=a$. Recall that  Lemma~\ref{lemma.fibers} shows that the scheme theoretic fiber over $\left< t-a\right>$ of $\varphi_{\mx, h}$ is isomorphic to the matrix Hessenberg scheme defined by the ideal $\ev_a(\tilde{\ci}_{\mx_t, h}) = \tilde{\ci}_{\mx_a, h}$.  
For each $i\in [n-1]$ we write $\cj_i^a:= \ev_a(\cjt_i)$. Note that this notation aligns with that of $\cj_i^0$ defined in Section~\ref{sec:matrixSchubs}, namely $\cj_i^0 = \ev_0(\cjt_i)$.

\begin{lemma}\label{lem.ev2} For all $a\in \C^*$, let $\psi_a:\C[\mathbf{z}] \to \C[\mathbf{z}]$ be the ring homorphism such that $\psi_a(z_{1\ell}) = a^{-1}(z_{1\ell}-z_{n\ell})$ for all $\ell\in [n]$ and $\psi_a(z_{k\ell}) = z_{k\ell}$ for all $k>1$ and $\ell\in [n]$.  Then for all $i,j\in [n]$ with $i\leq j,$
\begin{enumerate}
\item $\psi_a(\cj_{i-1}^a) = w_0\cdot \cj_{i-1}^0$ is the Schubert determinantal ideal corresponding to the permutation $u[i]$, and
\item $\psi_a(\cj_{i-1}^a + \ck_j) = w_0\cdot\cj_{i-1}^0 + \ck_{j}$ is the prime ideal defining the matrix Richardson variety $\cx_{u[i]}^\op\cap \cx_{w_0v[j]}$.
\end{enumerate}
\end{lemma}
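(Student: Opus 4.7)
The proof plan is a direct, mostly computational one: verify that $\psi_a$ is a well-defined $\C$-algebra automorphism of $\C[\mathbf{z}]$, then evaluate it on the given generators and invoke the earlier Schubert/Richardson identifications (Lemmas~\ref{lem.Schubert} and~\ref{lem.Schubert4}).

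First I would observe that $\psi_a$ is an automorphism of $\C[\mathbf{z}]$ for $a\in\C^*$, with inverse $\psi_a^{-1}$ defined by $z_{1\ell}\mapsto az_{1\ell}+z_{n\ell}$ (for $\ell\in[n]$) and $z_{k\ell}\mapsto z_{k\ell}$ for $k>1$. One checks this by composing: $\psi_a\circ\psi_a^{-1}(z_{1\ell}) = \psi_a(az_{1\ell}+z_{n\ell}) = a\cdot a^{-1}(z_{1\ell}-z_{n\ell})+z_{n\ell}=z_{1\ell}$, and on the fixed variables both compositions are the identity. In particular, $\psi_a$ carries ideals to ideals and $\psi_a(\ci+\cj)=\psi_a(\ci)+\psi_a(\cj)$.

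For part (1), recall $\cj_{i-1}^a=\ev_a(\cjt_{i-1})=\langle az_{1\ell}+z_{n\ell}\mid 1\leq\ell\leq i-1\rangle$. Applying $\psi_a$ generator-by-generator yields
\[
\psi_a(az_{1\ell}+z_{n\ell})=a\cdot a^{-1}(z_{1\ell}-z_{n\ell})+z_{n\ell}=z_{1\ell},
\]
so $\psi_a(\cj_{i-1}^a)=\langle z_{11},z_{12},\ldots,z_{1,i-1}\rangle$. Since the action of $w_0$ on $\C[\mathbf{z}]$ sends $z_{nj}$ to $z_{1j}$, this is exactly $w_0\cdot\cj_{i-1}^0$. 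The identification of the latter with the Schubert determinantal ideal of $u[i]$ is Lemma~\ref{lem.Schubert}.

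For part (2), the only remaining step is to note that each generator $p_B$ of $\ck_j$ is a minor of the submatrix $(z_{k\ell})_{k\in B,\,1\leq\ell\leq j}$ with $B\subseteq\{2,\ldots,n\}$, hence involves only variables $z_{k\ell}$ with $k\geq 2$. Since $\psi_a$ fixes every such variable, $\psi_a(\ck_j)=\ck_j$. Combining with part (1) gives
\[
\psi_a(\cj_{i-1}^a+\ck_j)=\psi_a(\cj_{i-1}^a)+\psi_a(\ck_j)=w_0\cdot\cj_{i-1}^0+\ck_j,
\]
and the hypothesis $i\leq j$ together with Lemma~\ref{lem.Schubert4} identifies this with the prime ideal of the matrix Richardson variety $\cx_{u[i]}^{\op}\cap\cx_{w_0v[j]}$.

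There is no real obstacle here: the lemma reduces to a one-line computation on the generators of $\cj_{i-1}^a$, the observation that $\ck_j$ lies in the subring of variables fixed by $\psi_a$, and an appeal to the already-established Schubert/Richardson identifications. The only mild subtlety worth stating explicitly is the invertibility of $\psi_a$ (which requires $a\neq 0$ and is exactly where the hypothesis $a\in\C^*$ is used), ensuring that $\psi_a$ interacts well with sums of ideals.
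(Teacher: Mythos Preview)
Your proof is correct and follows essentially the same approach as the paper, which simply says statement (1) ``follows immediately from Lemma~\ref{lem.Schubert}'' and (2) ``is a direct result of combining statement (1) with Lemma~\ref{lem.Schubert4}.'' You have spelled out the computations the paper leaves implicit, namely that $\psi_a(az_{1\ell}+z_{n\ell})=z_{1\ell}$ and that $\psi_a$ fixes $\ck_j$ since its generators avoid the first row; these are exactly the checks needed to reduce to the cited lemmas.
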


\begin{proof}The first statement follows immediately from Lemma~\ref{lem.Schubert}.  The second is a direct result of combining statement (1) with Lemma~\ref{lem.Schubert4}.
\end{proof}

\begin{lemma}\label{lemma.inclusions} The following statements all hold for $i,j,k,\ell\in [n-1]$.
\begin{enumerate}
\item If $i\leq j$, then $\cjt_i \subseteq \cjt_j$ and $\ck_{j}\subseteq \ck_i$.
\item Suppose $i<j$ and $k<\ell$.  We have $\cjt_i+ \ck_j \subseteq \cjt_k+ \ck_\ell$ if and only if $i\leq k$ and $j \geq \ell$.
\end{enumerate}
\end{lemma}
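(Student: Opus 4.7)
The plan is to prove Part (1) by elementary manipulations, and to prove Part (2) by reducing the nontrivial direction to a containment of matrix Richardson varieties that can then be analyzed via Bruhat order on their endpoints.

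For Part (1), the inclusion $\cjt_i \subseteq \cjt_j$ when $i \leq j$ is immediate from~\eqref{eq.JtandK} since the generators of $\cjt_i$ form a subset of those of $\cjt_j$. For the inclusion $\ck_j \subseteq \ck_i$ when $i \leq j$, I plan to apply Laplace expansion to each generator $p_B$ of $\ck_j$ (with $|B| = j$) along the last $j - i$ columns. This writes $p_B$ as an alternating sum $\sum_{B'} \pm\, p_{B'} \cdot q_{B \setminus B'}$ over $i$-subsets $B' \subseteq B$, where $q_{B \setminus B'}$ is the complementary $(j-i) \times (j-i)$ minor; since $B' \subseteq \{2, \ldots, n\}$ with $|B'| = i$, each $p_{B'}$ is itself a generator of $\ck_i$, so $p_B \in \ck_i$.

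For Part (2), sufficiency is immediate from Part (1). For the necessity direction, suppose $\cjt_i + \ck_j \subseteq \cjt_k + \ck_\ell$. My plan is to apply the composite $\psi_1 \circ \ev_1$, where $\ev_1$ is evaluation at $t = 1$ as in~\eqref{eqn.evaluationhom} and $\psi_1$ is the ring homomorphism from Lemma~\ref{lem.ev2}. By Lemma~\ref{lem.ev2}(2) this turns the containment into
\[
w_0 \cdot \cj_i^0 + \ck_j \;\subseteq\; w_0 \cdot \cj_k^0 + \ck_\ell.
\]
Since $i < j$ and $k < \ell$, Lemma~\ref{lem.Schubert4} identifies both sides as the prime ideals defining the matrix Richardson varieties $V_1 := \cx_{u[i+1]}^{\op} \cap \cx_{w_0 v[j]}$ and $V_2 := \cx_{u[k+1]}^{\op} \cap \cx_{w_0 v[\ell]}$. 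The ideal containment yields the reverse variety containment $V_2 \subseteq V_1$; intersecting with $G \subset M_n(\C)$ and projecting through $\pi$ then produces the Richardson-variety containment $X_{u[k+1]}^{\op} \cap X_{w_0 v[\ell]} \subseteq X_{u[i+1]}^{\op} \cap X_{w_0 v[j]}$ inside $G/B$.

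To finish, I plan to exploit that any nonempty Richardson variety $X_u^{\op} \cap X_w$ contains both torus-fixed points $uB/B$ and $wB/B$ as its Bruhat-order endpoints. Applying this to $V_2$ yields $u[i+1] \leq_{\Br} u[k+1]$ and $w_0 v[\ell] \leq_{\Br} w_0 v[j]$. Using the reduced expressions $u[m+1] = s_m s_{m-1} \cdots s_1$ and $v[m] = s_m s_{m+1} \cdots s_{n-1}$, the subword criterion for Bruhat order translates the first inequality to $i \leq k$, and using that left multiplication by $w_0$ reverses Bruhat order, the second translates to $v[\ell] \geq_{\Br} v[j]$, hence $j \geq \ell$. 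The main technical obstacle I foresee is the bookkeeping with Bruhat order at the end, but this is routine once the reduced expressions are in place; the conceptual work is done by Lemmas~\ref{lem.ev2} and~\ref{lem.Schubert4}, which allow passage from an abstract ideal containment to a geometric statement about matrix Richardson varieties.
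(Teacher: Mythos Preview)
Your proof is correct. The paper itself gives essentially no proof, deferring to \cite[Lemma~2.4]{GP-semisimple} for the $t=0$ case and asserting that ``the more general proof is identical''; that remark strongly suggests the intended argument is a direct elementary one (e.g.\ exhibiting an explicit generator of the left-hand side not contained in the right-hand side via a variable/degree count), which would indeed be insensitive to whether the linear forms are $z_{nk}$ or $tz_{1k}+z_{nk}$.

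Your route is genuinely different: you specialise via $\psi_1\circ\ev_1$ to $t=1$, invoke Lemma~\ref{lem.ev2} and Lemma~\ref{lem.Schubert4} to identify both sides with defining ideals of matrix Richardson varieties, and then read off the inequalities $i\le k$ and $j\ge\ell$ from the Bruhat endpoints of the smaller Richardson. This buys you a conceptual explanation (the inclusion of ideals is \emph{equivalent} to a containment of Richardson varieties, which is governed by Bruhat order) at the cost of invoking more machinery than strictly necessary. The direct argument is shorter and self-contained; yours ties the lemma back to the geometry that motivates it. Both are valid, and neither involves any circularity since Lemmas~\ref{lem.Schubert4} and~\ref{lem.ev2} do not rely on Lemma~\ref{lemma.inclusions}. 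One small stylistic note: your specialisation step only uses the forward implication ``containment in $\C[t,\mathbf{z}]$ implies containment after applying $\ev_1$,'' which is all you need for the necessity direction; you might say this explicitly so a reader does not worry about information being lost.
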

\begin{proof} The proof of this lemma when $t=0$ is found in~\cite[Lemma 2.4]{GP-semisimple}, and the more general proof is identical. 
\end{proof}


\subsection{Gr\"obner background} We recall essential facts about Gr\"obner geometry needed for the technical arguments of the next section.
Let $R$ be a polynomial ring over an algebraically closed field $k$.  Let $\preceq$ be a fixed monomial ordering on $R$. Denote the leading term of a nonzero polynomial $f\in R$ by $\LT(f)$ and let $\LM(f)$ be the monic monomial such that $\LT(f) = c\, \LM(f)$ for some $c\in k$, called the leading monomial of~$f$.

Given a nonzero ideal $\ci$ we let $\LT_{\preceq}(\ci)$ be the ideal generated by the leading terms of the elements of $\ci$.
A finite subset $G=\{g_1, \ldots, g_r\}$ of a nonzero ideal $\ci \subseteq R$ is a \emph{Gr\"obner basis} if 
\[
\LT_{\preceq}(\ci) = \left< \LT(g_1), \ldots, \LT(g_r) \right>.
\]
By convention, the empty set is a Gr\"obner basis of the zero idea. 

Given two polynomials $f_1, f_2\in R$, we let $\M(f_1, f_2)$ denote the (monic) least common multiple of $\LM(f_1)$ and $\LM(f_2)$.  The \emph{$S$-polynomial} of $f_1$ and $f_2$ is 
\begin{eqnarray}\label{eqn.S-poly.def}
S(f_1, f_2) := \frac{\M(f_1,f_2)}{\LT(f_1)} f_1 - \frac{\M(f_1, f_2)}{\LT(f_2)}f_2.
\end{eqnarray}
Buchberger's Criterion states that $G=\{g_1, \ldots, g_r\}$ is a Gr\"ober basis of $\ci= \left< g_1, \ldots, g_r \right>$ if and only if for all pairs $i\neq j$ the remainder of $S(g_i, g_j)$ on division by $G$ (with respect to some fixed order) is zero.  We make use of a refinement of Buchberger's Criterion; to state the refinement, we need the following definition.

Let $G = \{g_1, \ldots, g_r\}\subseteq R$.  Given $f\in R$ we say $f$ \emph{reduces to zero modulo $G$}, denoted by $f \rightarrow_G 0$, if $f$ has a \emph{standard representation}:
\[
f = a_1g_1 + \cdots +a_rg_r, \; a_1, \ldots, a_r \in R
\]
such that $a_ig_i\neq 0$ implies $\LM(a_ig_i) \preceq \LM(f)$ for all $i$.  The following is~\cite[Chap.~2, \S9, Thm.~3]{CLO}.

\begin{thm}\label{thm.Grobner} A generating set $G=\{g_1, \ldots, g_r\}$ of an ideal $\ci$ in $R$ is a Gr\"obner basis if and only if $S(g_i,g_j) \rightarrow_G 0$ for all $i\neq j$.
\end{thm}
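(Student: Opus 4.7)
The theorem is the refined Buchberger criterion, so the plan is to prove the two directions separately. The forward direction is short and follows from standard properties of the division algorithm. For the reverse direction, I will invoke the classical cancellation (telescoping) argument, which is the main technical ingredient.

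For the forward direction ($\Rightarrow$), suppose $G$ is a Gr\"obner basis and fix a pair $i\neq j$. Since $S(g_i,g_j)\in\ci$, I can run the multivariate division algorithm on $S(g_i,g_j)$ with respect to $G$. The algorithm automatically produces a representation $S(g_i,g_j)=\sum_k a_k g_k + r$ in which $\LM(a_k g_k)\preceq\LM(S(g_i,g_j))$ for each $k$ (this is standard), so the only issue is to show $r=0$. If $r\neq 0$, then $r\in\ci$ and hence $\LT(r)\in\LT_\preceq(\ci)=\langle\LT(g_1),\dots,\LT(g_r)\rangle$, meaning $\LT(r)$ is divisible by some $\LT(g_k)$. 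But the division algorithm terminates only once no leading monomial of the remainder is divisible by any $\LT(g_k)$, a contradiction. Thus $r=0$ and the expression is a standard representation, giving $S(g_i,g_j)\to_G 0$.

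For the reverse direction ($\Leftarrow$), assume every $S(g_i,g_j)$ admits a standard representation. To show $G$ is a Gr\"obner basis, I fix an arbitrary nonzero $f\in\ci$ and aim to prove $\LT(f)\in\langle\LT(g_1),\dots,\LT(g_r)\rangle$. Among all expressions $f=\sum_i h_i g_i$, choose one minimizing $\delta:=\max\{\LM(h_i g_i):h_i g_i\neq 0\}$ in the given monomial order; note $\LM(f)\preceq\delta$ automatically. If $\LM(f)=\delta$, then the unique highest-monomial term $\LT(f)$ coincides up to a scalar with $\LT(h_i g_i)$ for some $i$, so $\LT(g_i)\mid \LT(f)$ and we are done. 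The work lies in ruling out the case $\LM(f)\prec\delta$, where the contributions at level $\delta$ must cancel. Let $I=\{i:\LM(h_i g_i)=\delta\}$ and write $h_i = c_i x^{\alpha_i}+(\text{lower})$ for $i\in I$. The classical telescoping identity rewrites $\sum_{i\in I} c_i x^{\alpha_i} g_i$ as a $\C$-linear combination of the shifted $S$-polynomials $\tfrac{x^\delta}{\M(g_i,g_j)}S(g_i,g_j)$ over adjacent pairs in $I$. Substituting the hypothesized standard representation of each $S(g_i,g_j)$ into this identity produces a new representation $f=\sum_i h_i' g_i$ whose terms all have leading monomial strictly below $\delta$, contradicting minimality.

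The principal obstacle is the telescoping step that converts the high-order cancellation into a sum of $S$-polynomials, together with the bookkeeping needed to verify that after multiplication by $\tfrac{x^\delta}{\M(g_i,g_j)}$ the substituted standard representation remains strictly below $\delta$. The key identity
\[
\sum_{i\in I} c_i x^{\alpha_i} g_i \;=\; \sum_{k=1}^{|I|-1} d_k\,\tfrac{x^\delta}{\M(g_{i_k},g_{i_{k+1}})}\,S(g_{i_k},g_{i_{k+1}})
\]
can be derived by a direct telescoping once one uses the cancellation $\sum_{i\in I} c_i=0$ coming from $\LM(f)\prec\delta$; this is the one place where careful computation cannot be avoided. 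After that, the reduction to smaller $\delta$ is immediate from the standard-representation hypothesis, and the proof concludes.
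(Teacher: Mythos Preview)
The paper does not actually prove this theorem; it simply records it as a citation to \cite[Chap.~2, \S9, Thm.~3]{CLO}. Your proposal reproduces the standard textbook argument found there, so there is nothing to compare against in the paper itself.

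Your argument is correct in outline. One small imprecision: in the reverse direction you write that the cancellation at level $\delta$ gives $\sum_{i\in I} c_i = 0$, but in fact what vanishes is $\sum_{i\in I} c_i\,\mathrm{lc}(g_i)$, the sum weighted by leading coefficients (unless you tacitly assume the $g_i$ are monic). This does not affect the telescoping step, since after setting $e_i = c_i\,\mathrm{lc}(g_i)$ and $q_i = \mathrm{lc}(g_i)^{-1} x^{\alpha_i} g_i$ one has $\sum e_i = 0$ and $q_{i_k}-q_{i_{k+1}} = \tfrac{x^\delta}{\M(g_{i_k},g_{i_{k+1}})} S(g_{i_k},g_{i_{k+1}})$, which is exactly the identity you need. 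Likewise, your claim that ``$\LT(f)$ coincides up to a scalar with $\LT(h_i g_i)$ for some $i$'' is stated a bit loosely when several terms contribute at level $\delta$, but the conclusion $\LM(g_i)\mid\LM(f)$ still follows since any $i\in I$ satisfies $\LM(g_i)\mid\delta=\LM(f)$. With these minor clarifications the proof is complete.
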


We use the following lemmas to simplify our arguments below; the following is \cite[Chap.~2, \S9, Prop.~4]{CLO}.

\begin{lemma}\label{lemma.rel.prime} Let $G\subseteq R$ be a finite set of polynomials and suppose $f , g \in G$ such that the leading monomials of $f$ and $g$ are relatively prime.  Then $S(f,g) \rightarrow_G 0$. 
\end{lemma}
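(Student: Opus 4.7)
The plan is to construct an explicit standard representation for $S(f,g)$ as an $R$-linear combination of $f$ and $g$ themselves, and then verify the required leading-monomial conditions. After rescaling, one may assume that $f$ and $g$ are monic, so that $f = \LM(f) + f'$ and $g = \LM(g) + g'$ with $\LM(f') \prec \LM(f)$ and $\LM(g') \prec \LM(g)$. Since the leading monomials are relatively prime, $\M(f,g) = \LM(f)\LM(g)$, so the $S$-polynomial simplifies to
$$S(f,g) = \LM(g)\cdot f \;-\; \LM(f)\cdot g.$$

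Next I would apply the algebraic identity obtained by substituting $\LM(f) = f - f'$ and $\LM(g) = g - g'$ back into the two products above. A direct computation gives
$$S(f,g) = (g - g')\, f - (f - f')\, g = f'\cdot g \;+\; (-g')\cdot f,$$
which expresses $S(f,g)$ as an $R$-linear combination of $f$ and $g$ alone. This is the candidate standard representation, and it is the only nontrivial ingredient in the proof.

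The final step is to verify that this is indeed a standard representation with respect to $G$, i.e.\ that $\LM(f'\cdot g)$ and $\LM((-g')\cdot f)$ are both $\preceq \LM(S(f,g))$. Each summand has leading monomial strictly less than $\LM(f)\LM(g)$, namely $\LM(f')\LM(g)$ and $\LM(g')\LM(f)$. The key point is that these two monomials are in fact distinct: if $\LM(f')\LM(g) = \LM(g')\LM(f)$, then using $\gcd(\LM(f), \LM(g)) = 1$ it would follow that $\LM(g) \mid \LM(g')$, contradicting $\LM(g') \prec \LM(g)$. Hence no cancellation of leading terms occurs in $f'g - g'f$, so $\LM(S(f,g)) = \max(\LM(f'g),\, \LM(g'f))$, and each summand has leading monomial at most $\LM(S(f,g))$. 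Applying Theorem~\ref{thm.Grobner}, this yields $S(f,g) \rightarrow_G 0$. The main subtle point is precisely this no-cancellation argument, which depends essentially on the coprimality hypothesis; without it, the leading monomials of the two summands could coincide and cancel, producing a representation that is not standard in the required sense.
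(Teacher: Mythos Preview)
Your proof is correct and is essentially the standard argument; the paper itself does not prove this lemma but simply cites \cite[Chap.~2, \S9, Prop.~4]{CLO}, where precisely this computation appears. One small correction: the final appeal to Theorem~\ref{thm.Grobner} is misplaced, since that theorem characterizes Gr\"obner bases rather than the relation $\rightarrow_G 0$. Having exhibited a standard representation $S(f,g) = f'\cdot g + (-g')\cdot f$ with each nonzero summand satisfying the leading-monomial bound, you have established $S(f,g)\rightarrow_G 0$ directly from the definition.
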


For finite sets $G, H \subseteq R$, let
$$
GH := \{gh: g\in G, h\in H\}
$$
denote the set of products of polynomials $g\in G$ and $h\in H$. 

\begin{lemma}\label{lemma.Spoly} Suppose $G, H\subseteq R$ are two finite sets of polynomials.
Let $g_1, g_2\in G$ and $h_1,h_2\in H$ with the property that the leading monomials of $g_1$ and $g_2$ are each relatively prime to the leading monomials of both $h_1$ and $h_2$.  If $S(g_1, g_2)\rightarrow_G 0$ and $S(h_1, h_2)\rightarrow_H 0$, then $S(g_1h_1, g_2h_2)\rightarrow_{GH} 0$.
\end{lemma}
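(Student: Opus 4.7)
The plan is to build a standard representation of $S(g_1 h_1, g_2 h_2)$ modulo $GH$ by combining the given standard representations of $S(g_1, g_2)$ and $S(h_1, h_2)$ through a telescoping identity. By the coprimality of $\LM(g_i)$ and $\LM(h_j)$ for $i, j \in \{1, 2\}$, one obtains $\M(g_1 h_1, g_2 h_2) = \M(g_1, g_2) \cdot \M(h_1, h_2)$. Setting $u_i = \M(g_1, g_2)/\LT(g_i)$ and $v_j = \M(h_1, h_2)/\LT(h_j)$, a direct computation yields the key identity
$$S(g_1 h_1, g_2 h_2) = u_1 v_1 g_1 h_1 - u_2 v_2 g_2 h_2 = S(g_1, g_2) \cdot v_1 h_1 + u_2 g_2 \cdot S(h_1, h_2),$$
via the standard rearrangement $u_1 g_1 v_1 h_1 - u_2 g_2 v_2 h_2 = (u_1 g_1 - u_2 g_2) v_1 h_1 + u_2 g_2 (v_1 h_1 - v_2 h_2)$.

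Next, I would substitute the standard representations $S(g_1, g_2) = \sum_k a_k g_k$ and $S(h_1, h_2) = \sum_\ell b_\ell h_\ell$ into this identity, producing
$$S(g_1 h_1, g_2 h_2) = \sum_k (a_k v_1)(g_k h_1) + \sum_\ell (u_2 b_\ell)(g_2 h_\ell),$$
which manifestly expresses $S(g_1 h_1, g_2 h_2)$ as a coefficient combination of elements of $GH$. Since leading monomials multiply, each summand in the first sum satisfies $\LM((a_k v_1)(g_k h_1)) = \M(h_1, h_2) \cdot \LM(a_k g_k) \preceq \M(h_1, h_2) \cdot \LM(S(g_1, g_2))$, and analogously for the second sum, each summand has leading monomial at most $\M(g_1, g_2) \cdot \LM(S(h_1, h_2))$, both inequalities following directly from the hypothesized standard-representation bounds.

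The main obstacle is to show that these summand-level bounds do not exceed $\LM(S(g_1 h_1, g_2 h_2))$, which is the precise condition required for a standard representation. In the generic situation, the two blocks $S(g_1, g_2)\cdot v_1 h_1$ and $u_2 g_2 \cdot S(h_1, h_2)$ have distinct leading monomials, so $\LM(S(g_1 h_1, g_2 h_2))$ coincides with the larger of $\M(h_1, h_2) \cdot \LM(S(g_1, g_2))$ and $\M(g_1, g_2)\cdot \LM(S(h_1, h_2))$, and the verification is immediate. The hard case arises when these two leading monomials coincide and cancel, forcing $\LM(S(g_1 h_1, g_2 h_2))$ strictly below both summand bounds. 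I would handle this either by iterating the decomposition on the residual polynomial obtained after subtracting the cancelling leading terms, or by absorbing the excess high-order contributions using Lemma~\ref{lemma.rel.prime} applied to the cross pairs $S(g_i, h_j)$, whose leading monomials are coprime by hypothesis and hence already reduce to zero modulo $GH$.
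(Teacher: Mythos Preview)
Your approach matches the paper's almost exactly: the same factorization $\M(g_1h_1,g_2h_2)=\M(g_1,g_2)\M(h_1,h_2)$, the same telescoping identity (the paper uses the symmetric variant with $g_1$ and $h_2$ in the middle, but this is immaterial), and the same summand bounds $\M_h\cdot\LM(S(g_1,g_2))$ and $\M_g\cdot\LM(S(h_1,h_2))$. The gap is precisely where you flag it as ``the hard case,'' and neither of your two proposed fixes works as stated. Iterating the decomposition on the residual gives no obvious termination or control on leading monomials, and Lemma~\ref{lemma.rel.prime} concerns $S$-polynomials of elements \emph{within} a single generating set, so invoking it on the cross pairs $S(g_i,h_j)$ does not produce reductions modulo $GH$.

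The paper's resolution is cleaner: the cancellation case is \emph{vacuous}. The coprimality hypothesis forces $\gcd(\M_g,\M_h)=1$, since every variable in $\M_g=\mathrm{lcm}(\LM(g_1),\LM(g_2))$ occurs in some $\LM(g_i)$ and every variable in $\M_h$ occurs in some $\LM(h_j)$. If the two block leading monomials were equal, say $\M_g\cdot\LM(S(h_1,h_2))=\M_h\cdot\LM(S(g_1,g_2))$, then coprimality of $\M_g$ and $\M_h$ would force $\M_g\mid\LM(S(g_1,g_2))$; but $\LM(S(g_1,g_2))\prec\M_g$ by construction of the $S$-polynomial, a contradiction. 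Hence equality holds in your inequality~\eqref{eq:LMmax}-analogue, and the representation you already wrote down is standard. This single observation closes the argument without any iteration or auxiliary rewriting.
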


\begin{proof}
    We calculate the $S$ polynomial directly. Since $\LM(gh) = \LM(g) \LM(h)$ for all $g\in G, h\in H$,  (see~\cite[Chap.~2, \S2, Lemma 8]{CLO}), our assumptions imply $\M(g_1h_1,g_2h_2) = \M(g_1, g_2)\M(h_1,h_2)$.
For the rest of this proof, let $\M_g=\M(g_1, g_2)$ and $\M_h =\M(h_1,h_2)$. A straightforward calculation shows
\reqnomode
\begin{align}\label{eq.Sprod}
S(g_1h_1,g_2h_2) 
&=\frac{\M_g
\M_h
}{ \LT(g_1) \LT(h_1)} g_1 h_1 -  \frac{\M_g\M_h}{ \LT(g_2) \LT(h_2)} g_2 h_2 \nonumber \\
&=\frac{\M_g\M_h }{ \LT(g_1) \LT(h_1)} g_1 h_1 -  \frac{\M_g\M_h  }{ \LT(g_1) \LT(h_2)} g_1 h_2 \nonumber\\
&\phantom{=\frac{\M_g\M_h }{ \LT(g_1) \LT(h_1)} g_1 h_1\ }
+ \frac{\M_g\M_h }{ \LT(g_1) \LT(h_2)} g_1 h_2   -  \frac{\M_g\M_h}{ \LT(g_2) \LT(h_2)} g_2 h_2 \nonumber\\
&= \frac{\M_g}{\LT(g_1)} g_1 \left( \frac{\M_h}{\LT(h_1)} h_1 -  \frac{\M_h}{\LT(h_2)} h_2 \right)
+ \frac{\M_h}{ \LT(h_2)} h_2 \left( \frac{\M_g}{\LT(g_1)} g_1 - \frac{\M_g}{\LT(g_2)}g_2 \right)\nonumber\\
&=  \frac{\M_g}{\LT(g_1)} g_1 S(h_1, h_2) + \frac{\M_h}{\LT(h_2)} h_2 S(g_1, g_2).
\end{align}
If $S(g_1h_1,g_2h_2)\neq 0$ then together with~\cite[Chap.~2, \S2 Lemma 8]{CLO}, this equation implies
\leqnomode
\begin{equation}\label{eq:LMmax}
    \LM(S(g_1h_1,g_2h_2)) \preceq \max\left\{ \M_g\LM(S(h_1,h_2)), \M_h\LM(S(g_1,g_2)) \right\}
\end{equation}
with equality if and only if $\M_g\LM(S(h_1, h_2))\neq \M_h\LM(S(g_1, g_2))$. 
Indeed, suppose $\M_g\LM(S(h_1, h_2))= \M_h\LM(S(g_1, g_2))$.  Since $\LM(S(g_1, g_2)) \prec \M_g$ by 
construction of the $S$-polynomial (see~\cite[Chap.~3, \S6, Exercise 7]{CLO}), some factor of $\M_g$ must divide $\M_h$.  This in turn implies that at least one of $\LM(g_1)$ or $\LM(g_2)$ must share a common factor with either $\LM(h_1)$ or $\LM(h_2)$, contrary to the assumption that these leading monomials are relatively prime. 
Thus equality holds in \eqref{eq:LMmax}.
Substituting the standard representation for $S(h_1,h_2)$ with respect to $H$ and the standard representation of $S(g_1,g_2)$ with respect to $G$ into the formula~\eqref{eq.Sprod} 
yields a standard representation for $S(g_1h_1, g_2h_2)$  with respect to $GH$. 
\end{proof}

We also use the following key property of Gr\"obner bases; see~\cite[Chap.~3, \S1, Thm.~2]{CLO}. 

\begin{prop}[The Elimination Property] \label{prop.elimination} Let $\ci \subseteq R=k[x_1, x_2, \ldots, x_n]$ be an ideal and let $G=\{g_1, \ldots, g_r\}$ be a Gr\"obner basis of $\ci$ with respect to the lexicographic monomial order such that $x_1 \succ x_2 \succ \cdots \succ x_n$.  Then for each $0\leq \ell \leq n$ the set $G\cap k[x_{\ell+1}, \ldots, x_n]$ is a Gr\"obner basis of the ideal $\ci \cap k[x_{\ell+1}, \ldots, n]$.
\end{prop}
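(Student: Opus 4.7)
The plan is to verify the two defining properties of a Gr\"obner basis: first that $G_\ell := G \cap k[x_{\ell+1}, \ldots, x_n]$ sits inside $\ci_\ell := \ci \cap k[x_{\ell+1}, \ldots, x_n]$, and second that the leading term of every nonzero $f \in \ci_\ell$ is divisible by $\LT(g)$ for some $g \in G_\ell$. The first inclusion is immediate from the definition, so the content is entirely in the second condition, and I would exploit the hypothesis that $G$ is already a Gr\"obner basis for the larger ideal $\ci$.

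Fix a nonzero $f \in \ci_\ell$. Since $f \in \ci$, the Gr\"obner basis property of $G$ supplies some $g_i \in G$ with $\LT(g_i)$ dividing $\LT(f)$. Because $f \in k[x_{\ell+1}, \ldots, x_n]$, the monomial $\LT(f)$ involves only the variables $x_{\ell+1}, \ldots, x_n$, so divisibility forces $\LT(g_i)$ to involve only these same variables. The problem is thus reduced to promoting information about the leading term of $g_i$ to information about the whole polynomial $g_i$.

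The crucial observation is the following feature of the lexicographic order with $x_1 \succ x_2 \succ \cdots \succ x_n$: if a nonzero polynomial $p \in k[x_1, \ldots, x_n]$ satisfies $\LM(p) \in k[x_{\ell+1}, \ldots, x_n]$, then in fact $p \in k[x_{\ell+1}, \ldots, x_n]$. The reason is that any monomial involving at least one of $x_1, \ldots, x_\ell$ would lex-dominate any monomial using only $x_{\ell+1}, \ldots, x_n$, so if the leading monomial avoids $x_1, \ldots, x_\ell$, all monomials of $p$ must avoid them. Applying this to $g_i$ yields $g_i \in k[x_{\ell+1}, \ldots, x_n]$, hence $g_i \in G_\ell$, which completes the verification.

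The only substantive step is the lex-order observation in the last paragraph, and the only place where anything could go wrong is in hypothesizing the wrong monomial order. The statement genuinely fails for orders such as graded reverse lex, where a monomial like $x_2^2$ may dominate $x_1$; this is precisely why the elimination property is packaged as a feature of lexicographic order. Everything else is routine bookkeeping with the Gr\"obner basis definition.
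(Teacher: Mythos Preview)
Your argument is correct and is the standard proof of the elimination property. Note, however, that the paper does not supply its own proof of this proposition: it is stated with a citation to \cite[Chap.~3, \S1, Thm.~2]{CLO} and used as a black box, so there is no paper proof to compare against.
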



\subsection{Saturation proofs}

Consider the localization of $\C[t,\mathbf{z}]$ with respect to the multiplicative set $S=\{1, t, t^2, \ldots \}$, denoted $S^{-1}\C[t,\mathbf{z}]$, together with the canonical homomorphism $\kappa = \pi\circ\iota$ given by the composition of inclusion followed by the natural projection:
\begin{eqnarray}\label{eqn.localization}
\begin{tikzcd}
    \C[t, \mathbf{z}] \arrow[hook]{r}{\iota} &\C[s, t, \mathbf{z}] \arrow{r}{\pi}&\C[s, t,\mathbf{z}]/ \left< st-1\right> \cong S^{-1}\C[t,\mathbf{z}.]
\end{tikzcd}
\end{eqnarray}
The ideal in $S^{-1}\C[t,\mathbf{z}]$ generated by the image of an ideal $\ci$ in $\C[t,\bf{z}]$ is called its {\em localization}, denoted $S^{-1}\ci$.   Every ideal in $S^{-1}\C[t,z]$ is $S^{-1}\ci$ for some ideal $\ci$  in $\C[t,z]$ (though not uniquely). 

The \emph{saturation} of $\ci$ with respect to the element $t$ is the preimage $\kappa^{-1}(S^{-1}\ci)$, often denoted $(\ci:t^\infty).$ Alternatively, $(\ci:t^\infty)$ is the ideal generated by all $f\in \C[t,\mathbf{z}]$ such that $t^k f\in \ci$ for some $k$. 
We say that $\ci$ is \emph{$t$-saturated} if $\ci =(\ci:t^\infty)$. Observe that ideal $\ci$ is $t$-saturated if and only if 
\begin{align*}
    \ci &= \iota^{-1}\circ \pi^{-1}(S^{-1}\ci) = 
\left(\iota(\ci)
+ \langle st-1\rangle\right)\cap \C[t,z]
\end{align*}
where, by abuse of notation, $\iota(\ci)$ indicates the ideal in $\C[s,t,z]$ generated by $\iota(\ci).$

Our main result in this section is the following.

\begin{prop}\label{prop.saturated} Let $\cj_i^t$ and $\ck_j$ be as in Equation~\eqref{eq.JtandK}. The ideal 
$\cj_i^t + \ck_j$ is $t$-saturated for all $0\leq i<j \leq n$,
\end{prop}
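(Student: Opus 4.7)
The plan is to exhibit a Gr\"obner basis $G$ of $\cj_i^t+\ck_j$ with respect to a lexicographic monomial order in which $t$ is the smallest variable and no leading term of $G$ is divisible by $t$. The $t$-saturation then follows from a general principle: if $G$ is such a Gr\"obner basis of $\mathcal I\subseteq \C[t,\mathbf{z}]$ and no $\LT(g)$ for $g\in G$ involves $t$, then every $f\in\C[t,\mathbf{z}]$ with $tf\in\mathcal I$ lies in $\mathcal I$. Indeed, each reduction step of $tf$ modulo $G$ uses some $g\in G$ whose leading monomial (being $t$-free) divides $\LT(tf)=t\cdot\LT(f)$, so the multiplier extracted has the form $t\cdot m'$ for a $t$-free monomial $m'$; the updated polynomial is again $t$ times a polynomial in $\C[t,\mathbf{z}]$, and termination of the reduction at $t\cdot 0$ forces $f\in\mathcal I$.

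First I would replace each minor $p_B$ by its reduction $\bar{p}_B$ modulo $\{f_1,\ldots,f_i\}$, where $f_\ell := tz_{1\ell}+z_{n\ell}$. For $n\in B$, this substitutes $-tz_{1,\ell}$ for each $z_{n,\ell}$ with $\ell\le i$ in the submatrix $(z_{k\ell})_{k\in B,\,\ell\in[j]}$ and takes the $j\times j$ determinant; for $n\notin B$, $\bar{p}_B = p_B$. The set $G:=\{f_1,\ldots,f_i\}\cup\{\bar{p}_B : B\subseteq\{2,\ldots,n\},\,|B|=j\}$ generates $\cj_i^t+\ck_j$. I would then fix the lex order on $\C[t,\mathbf{z}]$ with priority
\[
z_{n,1}\succ\cdots\succ z_{n,i}\ \succ\ (\text{rows }2,\ldots,n-1)\ \succ\ (z_{n,\ell})_{\ell>i}\ \succ\ z_{1,*}\ \succ\ t,
\]
with the middle block ordered so that smaller row and column indices are larger. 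A direct computation via cofactor expansion along row $n$ gives $\LT(f_\ell)=z_{n,\ell}$ and
\[
\LT(\bar{p}_B)=\begin{cases}\prod_{r=1}^j z_{b_r,r}&\text{if }n\notin B,\ B=\{b_1<\cdots<b_j\},\\ z_{n,j}\cdot\prod_{r=1}^{j-1}z_{b_r,r}&\text{if }n\in B,\ B\setminus\{n\}=\{b_1<\cdots<b_{j-1}\};\end{cases}
\]
crucially, none of these leading monomials involves $t$.

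Finally, I would verify Buchberger's criterion for $G$. The $S$-polynomials $S(f_\ell,f_{\ell'})$ and $S(f_\ell,\bar{p}_B)$ have coprime leading monomials (since $z_{n,\ell}$ with $\ell\le i$ never appears in any $\LT(\bar{p}_B)$) and reduce to zero by Lemma~\ref{lemma.rel.prime}. The technical core is showing $S(\bar{p}_B,\bar{p}_{B'})\rightarrow_G 0$ for all pairs $B\ne B'$; this is a Gr\"obner-basis assertion for the maximal minors of the almost-generic matrix $\widehat{Z}$, whose rows $2,\ldots,n-1$ consist of distinct variables and whose row $n$ is the specialized vector $(-tz_{1,1},\ldots,-tz_{1,i},z_{n,i+1},\ldots,z_{n,j})$. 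I expect to establish this by adapting Sturmfels' classical Gr\"obner-basis theorem for generic maximal minors: because the leading-term analysis shows that the $t$-weighted entries of $\widehat{Z}$ never contribute to any leading monomial, the Pl\"ucker-type straightening identities among generic maximal minors should lift to identities among the $\bar{p}_B$ providing the required standard representations, possibly using Lemma~\ref{lemma.Spoly} to handle the inhomogeneity of the scalar $-t$. Making this transfer of the Gr\"obner structure from the generic to the specialized matrix rigorous is the main obstacle.
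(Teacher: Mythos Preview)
Your overall strategy—exhibit a Gröbner basis whose leading terms are $t$-free and conclude $t$-saturation—is correct and would work. The leading-term computations for $f_\ell$ and $\bar p_B$ under your order are right, and the coprimality arguments dispatching $S(f_\ell,f_{\ell'})$ and $S(f_\ell,\bar p_B)$ are fine. But the step you yourself call ``the main obstacle'' is a genuine gap, not a detail: you need the maximal minors $\bar p_B$ of the specialized matrix $\widehat Z$ to form a Gröbner basis, and Sturmfels' theorem applies to matrices of algebraically independent entries, which $\widehat Z$ is not (its row-$n$ entries $-tz_{1,\ell}$ are degree-two products). Saying you will ``adapt'' the straightening relations, ``possibly using Lemma~\ref{lemma.Spoly},'' is not a proof. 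One could try to treat $w_\ell:=-tz_{1,\ell}$ as fresh variables and argue that your lex order on $\C[t,\mathbf z]$ induces a compatible term order on the generic-matrix variables, but the substitution is not degree-preserving and you have not carried out the transfer.

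The paper avoids this difficulty by a different mechanism. It adjoins $s$ with $st=1$, works in $\C[s,t,\mathbf z]$ under lex with $s\succ t\succ\mathbf z$, and builds a Gröbner basis of $\iota(\cj_i^t+\ck_j)+\langle st-1\rangle$ containing the \emph{unmodified} minors $p_B$ together with $g_k=tz_{1k}+z_{nk}$, $h_{k\ell}=z_{1\ell}z_{nk}-z_{1k}z_{n\ell}$, $f_k=sz_{nk}+z_{1k}$, and $st-1$. The point is that $\{p_B\}$ is already a Gröbner basis for $\ck_j$ by the classical diagonal-term-order result, and under the paper's variable order the leading monomial of every $p_B$ is coprime to those of $g_k$, $h_{k\ell}$, $f_k$ (this is precisely where $i<j$ enters: row $n$ contributes $z_{n,j}$ to $\LT(p_B)$, never $z_{n,k}$ with $k\le i$). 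Hence every cross $S$-polynomial dies by Lemma~\ref{lemma.rel.prime}, and only short explicit checks among the linear and bilinear generators remain. Eliminating $s$ via Proposition~\ref{prop.elimination} then yields the saturation statement. In short, the paper pays the price of an extra variable and an elimination step in exchange for never having to prove a new determinantal Gröbner-basis theorem; your route avoids $s$ but shifts the entire burden onto exactly such a theorem, which you have not established.
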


We note that
$\cj_i^t + \ck_j$ is $t$-saturated if and only if 
\begin{eqnarray}\label{eqn.saturated}
(\iota(\cj_i^t+\ck_j)
+ \langle st-1 \rangle) 
\cap \C[t, \mathbf{z}] = \cj_i^t + \ck_j,
\end{eqnarray}
where $\iota(\cj_i^t + \ck_j)$ indicates the ideal generated by $\cj_i^t + \ck_j$ in $\C[s,t,\mathbf{z}].$

 The outline of the proof is as follows. We identify a Gr\"obner basis $H$ of 
$\iota(\cj_i^t+\ck_j)
+\langle ts-1\rangle$ in
$\C[s,t,\mathbf{z}],$  such that 
$$
G:= H\cap \C[t,\mathbf{z}]
$$ generates  $\cj_i^t+\ck_j$ in $\C[t,\mathbf{z}]$. We then apply the Elimination  Property (Proposition~\ref{prop.elimination}), to conclude that $G$ is a Gr\"obner basis of the intersection $\left(\iota(\cj_i^t+\ck_j)+\langle ts-1\rangle\right)\cap \C[t,\mathbf{z}]$. But since $G$ generates $\cj_i^t+\ck_j$, the two ideals are equal, implying that $\cj_i^t+\ck_j$  is $t$-saturated.

Let $\preceq$ denote the lexicographic monomial order on $\C[s,t,\mathbf{z}]$ induced by 
\begin{equation}\label{eqn.order}
    \begin{aligned}
        s\succ t &\succ z_{1n}  \succ z_{1\,n-1} \succ \cdots \succ z_{12} \succ z_{11} \\
        &\succ z_{21} \succ z_{22} \succ \cdots \succ z_{2n} \succ      
        \cdots \succ z_{n1} \succ z_{n2} \succ \cdots \succ z_{nn},
    \end{aligned}
\end{equation}
where we note the decreasing second index when the first index is 1, and the increasing second index subsequently. 
Using this ordering, for $B\subseteq \{2,\dots, n\}$ with $|B|=j$, 
\begin{equation}\label{eqn.LTpB} \LT (p_B) = z_{b_1 1}z_{b_2 2} \cdots z_{b_j j},  \textup{ where } B=\{b_1<b_2<\cdots<b_j\}.
\end{equation}

In particular, $\preceq$ yields a \emph{diagonal term order} on the minors used to define $\ck_j$, since the variable $z_{1k}$ are not factors of any term of $p_B$ for any $B \subset \{2,\dots, n\}$. That is, the leading monomial of each minor defining $\ck_j$ is  
the product of the diagonal entries of the corresponding submatrix.

\begin{lemma}\label{lemma.Grobner2}  Consider the  polynomials
$$
g_k := tz_{1k}+z_{nk},\quad h_{k\ell} := z_{1\ell}z_{nk} - z_{1k}z_{n\ell},\quad f_k := sz_{nk} + z_{1k}.
$$
Define sets
\begin{align*}
    H_i&:= \{g_k \mid 1\leq k\leq i\} 
    \cup \{h_{k\ell}\mid 1\leq k< \ell \leq i]\} \cup  \left\{  f_k \mid 1\leq k \leq i  \right\}\cup \left\{  st-1 \right\} \\
    H^j &:= \{p_B\mid B\subseteq \{2, \dots, n\}, |B|=j\} \cup  \left\{  st-1 \right\}.
\end{align*}
 Then for $i\in [n]$ and $j\in [n-1],$ 
\begin{enumerate}
\item  
The set $H_i$ is a Gr\"obner basis for  $\iota(\cj_i^t)+ \left< st-1 \right> \subseteq \C[s,t,\mathbf{z}]$.
\item The set $H^j$
is a Gr\"obner basis for  $\iota(\ck_j)+ \left< st-1 \right> \subseteq \C[s,t,\mathbf{z}]$.
\item The set $H := H_i \cup H^j$
is a Gr\"obner basis for $\iota(\cj_i^t+\ck_j) + \left< st-1 \right> \subseteq \C[s,t,\mathbf{z}]$ when $i<j$. 
\end{enumerate}
\end{lemma}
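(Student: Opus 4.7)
The plan is to verify each claim using Buchberger's criterion (Theorem~\ref{thm.Grobner}) together with Lemma~\ref{lemma.rel.prime} for pairs with relatively prime leading monomials. Under the order~\eqref{eqn.order}, the relevant leading monomials are
\[ \LT(g_k) = tz_{1k}, \qquad \LT(h_{k\ell}) = z_{1\ell}z_{nk}\ (k<\ell), \qquad \LT(f_k) = sz_{nk}, \qquad \LT(st-1) = st. \]

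For part (1), the identities
\[ h_{k\ell} = z_{1\ell}g_k - z_{1k}g_\ell \qquad \text{and}\qquad f_k = s g_k - z_{1k}(st-1) \]
show that $H_i \subseteq \iota(\cjt_i) + \langle st-1\rangle$, so $H_i$ is a generating set. I would then run through the pairs in Buchberger's criterion. Many mixed pairs---for instance $(g_k, f_{k'})$ with $k \neq k'$ and $(h_{k\ell}, st-1)$---have relatively prime leading monomials and are dispatched by Lemma~\ref{lemma.rel.prime}. The remaining pairs should reduce to a monomial multiple of an element of $H_i$: explicit computation yields (up to signs) $S(g_k, g_\ell) = \pm h_{\min,\max}$, $S(g_k, st-1) = f_k$, $S(f_k, st-1) = g_k$, $S(f_k, f_{k'}) = \pm h_{\min,\max}$, $S(h_{k\ell}, f_k) = -z_{1k}f_\ell$, $S(h_{k\ell}, h_{k\ell'}) = \pm z_{1k} h_{\ell\ell'}$, and $S(h_{k\ell}, h_{k'\ell}) = z_{n\ell}h_{kk'}$. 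In each case one must verify that the right-hand side is a standard representation by comparing leading monomials.

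For part (2), I would invoke the classical fact that the maximal minors of a generic matrix form a universal Gr\"obner basis, hence a Gr\"obner basis under the diagonal order induced by~\eqref{eqn.order}; this handles the $p_B$--$p_{B'}$ pairs. The leading monomial $\LT(p_B)$ uses only variables $z_{k\ell}$ with $k \geq 2$ and $\ell \leq j$, so it is relatively prime to $\LT(st-1) = st$ and Lemma~\ref{lemma.rel.prime} addresses the remaining pairs.

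For part (3), since $\LT(p_B) = z_{b_1 1}\cdots z_{b_j j}$ with $b_1 < \cdots < b_j \leq n$ uses a row-$n$ variable only in column $j$ (i.e.\ as $z_{nj}$ when $b_j = n$), the hypothesis $i < j$ guarantees that none of $\LT(g_k), \LT(h_{k\ell}), \LT(f_k)$ for $k, \ell \leq i$ shares a variable with $\LT(p_B)$. Lemma~\ref{lemma.rel.prime} therefore handles every cross pair, and together with parts (1) and (2) this gives the conclusion. The main obstacle is the case-by-case verification of standard representations for the nontrivial $S$-polynomials in part (1); all other reductions follow cleanly from relative primeness.
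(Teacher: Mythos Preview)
Your approach is essentially identical to the paper's: verify generation via the syzygies $h_{k\ell}=z_{1\ell}g_k-z_{1k}g_\ell$ and $f_k=sg_k-z_{1k}(st-1)$, dispatch pairs with coprime leading monomials via Lemma~\ref{lemma.rel.prime}, compute the remaining $S$-polynomials explicitly, invoke the standard Gr\"obner basis result for maximal minors in part~(2), and use coprimeness of cross-pairs under $i<j$ for part~(3).

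One case slips through your enumeration in part~(1): the pair $(g_k,h_{ak})$ with $a<k$ does \emph{not} have coprime leading monomials, since $\LT(g_k)=tz_{1k}$ and $\LT(h_{ak})=z_{1k}z_{na}$ share $z_{1k}$. The paper handles this explicitly, computing $S(g_k,h_{ak})=z_{nk}g_a$, which is already a standard representation. This is a routine addition to your table and the same mechanism covers it, so the omission is minor rather than structural.
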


\begin{proof} We first show that the elements in each set generate the corresponding ideals. Since $st-1\in H_i$ and
$\cj_i^t = \langle g_1, \dots, g_i\rangle,$  the set $H_i$ clearly generates an ideal that contains $\iota(\cj_i^t)+\langle st-1\rangle$. On the other hand,  $h_{k\ell} = z_{1\ell}g_k - z_{1k}g_\ell \in \iota(\cj_i^t)$ for $k< \ell\leq i$ and $f_k = sg_k - z_{1k}(st-1)\in\iota(\cj_i^t)+\langle st-1\rangle$ for $k\leq i$, so $H_i$ generates no more than $\iota(\cj_i^t)+\langle st-1\rangle$. It is immediate that $H^j$ generates the ideal $\iota(\ck_j)+\langle st-1\rangle$. The union therefore generates the sum of the ideals.

We show that each pair of $S$-polynomial reduces to $0$ modulo the corresponding set. Consider the following chart of leading terms using the monomial ordering from above:
\begin{center}
\begin{tabular}{c|c|c}
Polynomial & Leading Term & Index Set
\\
\hline
$st-1$ & $st$ &
\\
\hline
\multirow{2}{*}{$p_B$} & \multirow{2}{*}{$z_{b_1 1}z_{b_2 2} \cdots z_{b_j j}$} & $B=\{b_1<b_2<\cdots<b_j\}$ \\
&  &$B\subset\{2,\dots, n\}$, $1\leq j\leq n-1$\\
\hline
$g_k$ & $tz_{1k}$
& $1\leq k\leq n$\\
\hline 
$h_{k\ell}$ & $z_{1\ell}z_{nk}$ & $1\leq k<\ell\leq n$\\
\hline
$f_k$ & $sz_{nk}$ & $1\leq k\leq n$
\end{tabular}
\end{center}

We observe by inspection that many of these polynomials have leading monomials with no common factors, which implies that their corresponding $S$-polynomials are $0$ modulo the corresponding set by Lemma~\ref{lemma.rel.prime}. We then prove that the remaining $S$-polynomials have a standard representation, showing that the set forms a Gr\"obner basis using Theorem~\ref{thm.Grobner}. 

\vspace{.1in}
\noindent Proof of (1). The polynomials in $H_i$ have the following properties:
\begin{itemize}
\item The leading term of $st-1$ is relatively prime to the leading term of $h_{k\ell}$ for all index choices. 
\item The leading term of $g_k$ is relatively prime to the leading terms of $h_{ab}$ for $k\neq b$ and $f_\ell$ for all $\ell$. 
\item The leading terms of $h_{k\ell}$ and $h_{ab}$ are relatively prime when both $k\neq a$ and $\ell\neq b$.
\item The leading terms of $h_{k\ell}$ and $f_a$ are relatively prime if $a\neq k$.
\end{itemize}

Thus by Lemma~\ref{lemma.rel.prime}, the $S$-polynomials vanish for these pairs of polynomials, modulo any set that contains them both, so in particular, modulo $H_i$. 

The remaining pairs of functions in $H_i$ have $S$-polynomials tabulated here:
\begin{center}
\begin{tabular}{c|c|c|c|c}
$f$ & $g$ & index set &  $\M(f,g)$ &   $S(f,g)$ \\ \hline
$g_k$ & $g_\ell$ & $1\leq k<\ell \leq i$ & $tz_{1k}z_{1\ell}$ & $h_{k\ell}$ \\ \hline
$g_k$ & $h_{ak}$ & $1\leq a<k \leq i$ & $tz_{1k}z_{na}$ &  $ z_{nk}g_{a}$ \\ 
\hline
$h_{k\ell}$ & $h_{kb}$ & $1\leq k<b<\ell \leq i$ & $z_{1\ell} z_{1b}z_{nk}$ & $z_{1k}h_{b\ell}$\\
\hline
$h_{k\ell}$ & $h_{a\ell}$ & $1\leq k<a< \ell \leq i$ &$z_{1\ell} z_{nk}z_{na}$ & $z_{n\ell} h_{ka}$\\
\hline
$g_k$ & $st-1$ & $1 \leq k \leq i$ &  $stz_{1k}$ &  $f_k$ \\ \hline
$h_{k\ell}$ & $f_k$ &  $1 \leq k<\ell \leq i,$ & $sz_{1\ell}z_{nk}$ & $-z_{1k} f_\ell$ \\
\hline
$f_k$ & $f_\ell$ &$1\leq k\leq \ell \leq i$& $sz_{1\ell} z_{nk}$& $-h_{k\ell}$\\
\hline
$f_k$ & $st-1$ & $1 \leq k\leq i$& $stz_{nk}$& $g_k$  
\end{tabular}
\end{center}
In each case, the resulting $S$-polynomial clearly has a standard representation and thus reduces to zero modulo $H_i$.
Thus we have shown that $H_i$ is a Gr\"obner basis of $\iota(\cj_i^t)+\langle st-1\rangle$.

\vspace{.1in}
\noindent Proof of (2). Now consider polynomials in $H^j.$ Since the leading term of $st-1$ is relatively prime to the leading terms of $p_B$ for all index choices, $S(p_B, st-1)\rightarrow_{H^j} 0$. 

It is well known (see e.g.~\cite[Thm.~16.28]{MS05}) that the set $\{w_0\cdot p_B \mid B\subseteq \{2,\ldots, n\}, |B|=j\}$ is a Gr\"obner basis for~$w_0\cdot \ck_j$ with respect to any monomial order for which the leading term of each minor $p_B$ is a product of anti-diagonal entries.  The action of $w_0$ sends anti-diagonal terms to diagonal ones and~\eqref{eqn.order} is a diagonal term order. Thus the set $\{p_B\mid B\subseteq \{1,\ldots, n\}, |B|=j\}$ is a Gr\"obner basis for $\ck_j$. In particular, $S(p_A, p_B)\rightarrow_{H^j}0$ for all $A, B\subset \{2, \dots, n\}$ with $|A|=|B|=j$ by Theorem~\ref{thm.Grobner}. The desired result now follows from a second application of Theorem~\ref{thm.Grobner}.

\vspace{.1in}

\noindent Proof of (3).
Following the first two parts of the proposition, we need only check the $S$-polynomials for pairs of functions $f\in H_i$ and $g\in H^j$. We may ignore pairs including $st-1$ as it occurs in both sets. Again considering the leading terms from the table, we readily observe: 
\begin{itemize}
\item The leading term of $p_B$ is relatively prime to the leading term of $g_k$ for all $k$.
\item The leading term of $p_B$ is relatively prime to the leading terms of $h_{k\ell}$ and $f_k$, unless $b_j=n$ and $j=k$.  By assumption $k\leq i < j$, so the polynomials are relatively prime in all cases. 
\end{itemize}
It now follows that $S(f,g)\rightarrow_H 0$ for all $f,g\in H$ by Lemma~\ref{lemma.rel.prime}. Since $H$ generates the ideal $\iota(\cj_i^t)+\ck_j$, we conclude that $H$ is a Gr\"obner basis of this ideal by Theorem~\ref{thm.Grobner}. \end{proof}

We may now prove the main result. 
\begin{proof}[Proof of Proposition~\ref{prop.saturated}]
Since $s\succ t$, we may apply Proposition~\ref{prop.elimination} to each of $H_i, H^j$ and $H$. 
For $1\leq i<j\leq n-1,$ we conclude
$$
H\cap \C[t,\mathbf{z}]= \{g_k: k\in [i]\}\cup \{h_{k\ell}: k,\ell\in [i], k<\ell\}\cup \{p_B: B\subset \{2, \dots, n\}, |B|=j\}
$$ is a Gr\"obner basis of $$
(\iota(\cj_i^t+\ck_j)+ \langle st-1\rangle) \cap \C[t,\mathbf{z}]. 
$$
Since $\cj_i^t = \langle g_1, \dots, g_i\rangle$ and $\ck_j = \langle p_B \mid  B\subset \{2, \dots, n\}, |B|=j \rangle$, it is clear that $H\cap \C[t,\mathbf{z}]$ generates an ideal that contains $\cj_i^t + \ck_j$. On the other hand, we showed that $h_{k\ell}\in \cj_i^t$, so $H\cap \C[t,\mathbf{z}]$ generates and ideal contained in $\cj_i^t + \ck_j$. It follows that 
$$
(\iota(\cj_i^t+\ck_j)+ \langle st-1\rangle) \cap \C[t,\mathbf{z}] = \cj_i^t+\ck_j,
$$
which implies the sum of ideals is $t$-saturated. The same argument applied to $H_i$ shows that $H_i\cap \C[t,\mathbf{z}]$ is a Gr\"obner basis of $\cj_i^t$ and that this ideal is $t$-saturated. This proves the case that $j=n$, when $\cj_i^t+\ck_j = \cj_i^t$. Finally, applying the same argument to $H^j$ shows that $H^j\cap \C[t,\mathbf{z}]$ is a Gr\"obner basis of $\ck_j$, and implying that this ideal is $t$-saturated. This observation covers the case that $i=0$, when $\cj_i^t+\ck_j = \ck_j$. \end{proof}

We record two corollaries of the proof above for use later.

\begin{cor} \label{cor.sq.free} 
Let $B\subseteq \{2, \ldots, n\}$ with $|B|=j\leq n-1$ and $i\leq j$. Then
\begin{enumerate}
\item The leading monomials of $g_k$ and $p_B$ are relatively prime for $1\leq k \leq i$, and
\item The leading monomials of $h_{k\ell}$ and $p_B$ are relatively prime for  $1\leq k < \ell \leq i$.
\end{enumerate}
In particular, both $\LT(g_kp_B)$ and $\LT(h_{k\ell}p_B)$ square-free monomials.
\end{cor}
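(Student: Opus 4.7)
The plan is direct: this corollary is essentially a bookkeeping observation that was already noticed (and used) in the proof of part (3) of Lemma~\ref{lemma.Grobner2}. I would therefore simply reread off the leading terms tabulated there and verify that no variables are shared.

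First, I would record the leading monomials with respect to the order~\eqref{eqn.order}: $\LT(g_k)=tz_{1k}$, $\LT(h_{k\ell})=z_{1\ell}z_{nk}$, and $\LT(p_B)=z_{b_1 1}z_{b_2 2}\cdots z_{b_j j}$ for $B=\{b_1<b_2<\cdots<b_j\}\subseteq \{2,\dots,n\}$, as displayed in~\eqref{eqn.LTpB}. The variables appearing in $\LT(p_B)$ are $\{z_{b_m m}:1\le m\le j\}$, with no factor of $t$ and with every row index $b_m\ge 2$, so in particular no $z_{1k}$ divides $\LT(p_B)$.

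For part (1), $\LT(g_k)=tz_{1k}$ involves $t$ and a first-row variable, neither of which divides $\LT(p_B)$ by the observations above, so $\gcd(\LT(g_k),\LT(p_B))=1$. For part (2), the first factor $z_{1\ell}$ of $\LT(h_{k\ell})$ is a first-row variable and so does not divide $\LT(p_B)$. The only possibly delicate point is the second factor $z_{nk}$: it divides $\LT(p_B)$ precisely when $b_k=n$, and since $b_1<\cdots <b_j\le n$ this forces $k=j$. But the hypotheses give $k<\ell\le i\le j$, so $k<j$, ruling out this case; hence $\gcd(\LT(h_{k\ell}),\LT(p_B))=1$. This is the only inequality that requires any thought and is the sole ``obstacle''.

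Finally, for the ``in particular'' clause, observe that $\LT(g_k)=tz_{1k}$ and $\LT(h_{k\ell})=z_{1\ell}z_{nk}$ are each square-free monomials (using $k\ne \ell$), and $\LT(p_B)$ is square-free because its factors $z_{b_m m}$ have pairwise distinct column indices. Because $\LT(f_1)$ and $\LT(f_2)$ being relatively prime together with $\LM(f_1f_2)=\LM(f_1)\LM(f_2)$ (cf.~\cite[Chap.~2, \S2, Lemma~8]{CLO}) implies $\LT(f_1 f_2)$ is the product of two square-free monomials sharing no variable, the products $\LT(g_kp_B)$ and $\LT(h_{k\ell}p_B)$ are themselves square-free, completing the proof.
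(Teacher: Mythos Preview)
Your proof is correct and follows essentially the same approach as the paper, which simply points back to the leading-term table and the analysis in the proof of Lemma~\ref{lemma.Grobner2}(3). You have made explicit the one nontrivial step, namely that $z_{nk}\mid \LT(p_B)$ forces $k=j$, and ruled it out via $k<\ell\le i\le j$; this is exactly what the paper has in mind, and your version even handles the boundary case $i=j$ cleanly (the paper's cross-reference cites a setting with $i<j$, though the argument goes through unchanged).
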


\begin{cor} \label{cor.Grobner} The set $G_i:= H_i\cap \C[t,\mathbf{z}]$ is a Gr\"obner basis of $\cjt_i$ for all $i\in [n]$.  The set $G^j:= H^j\cap \C[t,\mathbf{z}]$ is a Gr\"obner basis of $\ck_j$ for all $j\in [n-1]$.
\end{cor}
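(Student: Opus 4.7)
The proof is essentially an extraction from the argument already given for Proposition~\ref{prop.saturated}. The plan is to invoke the Elimination Property (Proposition~\ref{prop.elimination}) on the Gr\"obner bases established in Lemma~\ref{lemma.Grobner2}, then identify the resulting intersected ideals.

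First, Lemma~\ref{lemma.Grobner2}(1) tells us $H_i$ is a Gr\"obner basis of $\iota(\cjt_i) + \langle st-1\rangle$ in $\C[s, t, \mathbf{z}]$ with respect to the lexicographic order~\eqref{eqn.order}, in which $s$ is the largest variable. Applying Proposition~\ref{prop.elimination} with $\ell = 1$ (eliminating $s$), we conclude that
\[
G_i = H_i \cap \C[t, \mathbf{z}]
\]
is a Gr\"obner basis of the eliminated ideal $\bigl(\iota(\cjt_i) + \langle st-1\rangle\bigr) \cap \C[t, \mathbf{z}]$. By the discussion preceding Proposition~\ref{prop.saturated}, this intersection is precisely the $t$-saturation $(\cjt_i : t^\infty)$. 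Thus it remains to observe that $\cjt_i$ is itself $t$-saturated, which is the $\ck_j = 0$ (equivalently $j = n$) degenerate case of Proposition~\ref{prop.saturated}. Inspecting the generators of $G_i$ directly: $g_k \in \cjt_i$ by definition, and $h_{k\ell} = z_{1\ell} g_k - z_{1k} g_\ell \in \cjt_i$ as was already noted, so $G_i$ generates $\cjt_i$, and the Gr\"obner basis claim follows.

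The second statement proceeds identically: Lemma~\ref{lemma.Grobner2}(2) gives that $H^j$ is a Gr\"obner basis of $\iota(\ck_j) + \langle st-1\rangle$, and Proposition~\ref{prop.elimination} yields that $G^j = H^j \cap \C[t, \mathbf{z}] = \{p_B \mid B \subseteq \{2, \ldots, n\}, |B| = j\}$ is a Gr\"obner basis of $(\ck_j : t^\infty)$. Since $\ck_j$ lies in $\C[\mathbf{z}]$ and does not involve $t$, it is trivially $t$-saturated, so $G^j$ is a Gr\"obner basis of $\ck_j$.

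There is no real obstacle here: both conclusions are immediate from chaining Lemma~\ref{lemma.Grobner2} with Proposition~\ref{prop.elimination}, together with the elementary observation that each of $\cjt_i$ and $\ck_j$ is $t$-saturated (in the latter case because $t$ does not appear in the generators at all, and in the former case by the same Gr\"obner argument already carried out in the proof of Proposition~\ref{prep.saturated}). The corollary is best viewed as an explicit description of generators for the Gr\"obner bases, packaged separately because the sets $G_i$ and $G^j$ will be invoked in subsequent sections.
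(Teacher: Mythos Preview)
Your proposal is correct and follows exactly the route the paper takes: the content of this corollary is established inside the proof of Proposition~\ref{prop.saturated}, where the Elimination Property is applied to $H_i$ and $H^j$ separately, and the paper simply records the conclusion here for later use. Your reconstruction of that argument---Lemma~\ref{lemma.Grobner2} plus Proposition~\ref{prop.elimination}, followed by the check that $G_i$ and $G^j$ already lie in and generate $\cjt_i$ and $\ck_j$ respectively---is precisely what the paper does. (Note the typo \texttt{prep.saturated} in your final paragraph should be \texttt{prop.saturated}.)
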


\begin{proof} Statement (1) is obvious and statement (2) follows by the proof of Lemma~\ref{lemma.Grobner2}(3) above. The last assertion of the statement follows immediately since $\LT(g_k)$, $\LT(h_{k\ell})$, and $\LT(p_B)$ are all square-free monomials.
\end{proof}

The next example shows that the assumption $i<j$ in Lemma~\ref{lemma.Grobner2}(3) is necessary.

\begin{example} Let $n=4$ and consider the ideals $\cj_2^t = \left< g_1, g_2 \right>$ and $\ck_2 = \left< p_{\{2,3\}}, p_{\{2,4\}}, p_{\{3,4\}} \right>$ where $g_1 =  tz_{11}+z_{41}$ and $g_2 = tz_{12}+z_{42}$ and 
\[
p_{\{2,3\}} = z_{21}z_{32}-z_{22}z_{31},\; p_{\{2,4\}} = z_{21}z_{42}-z_{22}z_{41}, \textup{ and }  p_{\{3,4\}} = z_{31}z_{42}-z_{32}z_{41}.
\]
Both $\cj_2^t$ and $\ck_2$ are $t$-saturated by Proposition~\ref{prop.saturated}.  However, the indices $i=j=2$ of the ideals do not satisfy the hypotheses of Proposition~\ref{prop.saturated} or Lemma~\ref{lemma.Grobner2}(3) for the sum.
Indeed, consider the elements $f_2:=sz_{42}+z_{12}$ and $p_{\{2,4\}} $ in $H$.
The leading monomials of $f_2$ and $p_{\{2,4\}}$ are not relatively prime and we have  
\[
S(f_2, p_{\{2,4\}}) = \frac{sz_{21}z_{42}}{sz_{42}} f_2 - \frac{sz_{21}z_{42}}{z_{21}z_{42}}  p_{\{2,4\}} = z_{21}z_{12} + sz_{22}z_{41}.
\]
Recall that $f_1 = sz_{41}+z_{11} \in H$.  
We may write 
\[
S(f_2, p_{\{2,4\}}) = z_{22} f_1 + z_{21}z_{12}-z_{22}z_{11}
\] 
but since $z_{21}z_{12}-z_{22}z_{11}$ is not divisible by the elements of $H$ 
in any order, we conclude that $S(f_2, p_{\{2,4\}})$ does not reduce to zero modulo $H$.
The fact that $S(f_2, p_{\{2,4\}})$ has a non-zero remainder modulo $H$ that lies in $\C[t,\mathbf{z}]$ tells us the ideal $\cj_2^t + \ck_2$ is not $t$-saturated.  Indeed,
\[
t(z_{21}z_{12}-z_{22}z_{11}) = z_{21}g_2  - z_{22} g_1 + p_{\{2,4\}} \in \cj_2^t+\ck_2
\]
but $z_{21}z_{12}-z_{22}z_{11} \notin \cj_2^t+\ck_2$.
\end{example}


\subsection{Prime and radical ideal proofs}

We now get the following.

\begin{thm}\label{thm.prime} 
The ideal $\cj_i^t + \ck_j \subset \C[t,\mathbf{z}]$ is prime for all  $i, j$ with $0 \leq i<j \leq n$.
\end{thm}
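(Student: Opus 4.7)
The plan is to reduce the primality of $\cj_i^t + \ck_j$ in $\C[t,\mathbf{z}]$ to the known primality of a (Schubert-type) ideal in $\C[\mathbf{z}]$, passing through the localization at $t$ and using the $t$-saturation that we already have in hand.

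First I would invoke Proposition~\ref{prop.saturated} to observe that, since $\cj_i^t+\ck_j$ is $t$-saturated, the natural ring homomorphism
\[
\C[t,\mathbf{z}]/(\cj_i^t+\ck_j) \;\longhookrightarrow\; \C[t,t^{-1},\mathbf{z}]/(\cj_i^t+\ck_j)\C[t,t^{-1},\mathbf{z}]
\]
is injective. Since a subring of a domain is a domain, it suffices to prove that the extended ideal is prime in the Laurent polynomial ring $\C[t,t^{-1},\mathbf{z}]$.

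Next I would construct an explicit $\C[t,t^{-1}]$-algebra automorphism $\varphi$ of $\C[t,t^{-1},\mathbf{z}]$ by setting $\varphi(z_{1\ell}) = t^{-1}(z_{1\ell}-z_{n\ell})$ for every $\ell\in[n]$ and $\varphi(z_{k\ell}) = z_{k\ell}$ for all $k\geq 2$. (This is an automorphism because $t$ is invertible in $\C[t,t^{-1},\mathbf{z}]$; its inverse sends $z_{1\ell}$ to $tz_{1\ell}+z_{n\ell}$.) By direct computation, $\varphi(tz_{1k}+z_{nk}) = z_{1k}$ for each $k\in[i]$, and $\varphi$ fixes each minor $p_B$ with $B\subseteq\{2,\ldots,n\}$ because those minors involve only the variables $z_{k\ell}$ with $k\geq 2$. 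Consequently
\[
\varphi\bigl(\cj_i^t+\ck_j\bigr) \;=\; \langle z_{11},\ldots,z_{1i}\rangle + \ck_j,
\]
viewed as an ideal in $\C[t,t^{-1},\mathbf{z}]$.

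Finally I would deduce that this transformed ideal is prime. By Lemma~\ref{lem.Schubert4}, the ideal $\langle z_{11},\ldots,z_{1i}\rangle + \ck_j = w_0\cdot\cj_i^0+\ck_j$ is prime in $\C[\mathbf{z}]$ (indeed, the two summands live in disjoint sets of variables and each is prime). Writing
\[
\C[t,t^{-1},\mathbf{z}]\big/\bigl(\langle z_{11},\ldots,z_{1i}\rangle+\ck_j\bigr)\C[t,t^{-1},\mathbf{z}]
\;\cong\;
\Bigl(\C[\mathbf{z}]\big/(\langle z_{11},\ldots,z_{1i}\rangle+\ck_j)\Bigr)[t,t^{-1}],
\]
the right-hand side is a Laurent polynomial ring over a domain, hence a domain. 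Pulling back through $\varphi$ and then through the injection above proves that $\cj_i^t+\ck_j$ is prime in $\C[t,\mathbf{z}]$.

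The main thing to get right is the bookkeeping around the automorphism: one must verify that $\varphi$ is well-defined over $\C[t,t^{-1}]$ (this is why the saturation step is needed first, to justify inverting $t$), and one must check that primality of $\langle z_{11},\ldots,z_{1i}\rangle+\ck_j$ is preserved under the flat base change $\C[\mathbf{z}]\to\C[t,t^{-1},\mathbf{z}]$, which follows immediately because the quotient is a Laurent polynomial ring over a domain. Neither step is delicate, but they are the only substantive content beyond Proposition~\ref{prop.saturated} and Lemma~\ref{lem.Schubert4}.
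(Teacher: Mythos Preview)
Your proof is correct and follows essentially the same strategy as the paper: invoke Proposition~\ref{prop.saturated} to inject the quotient into the localization at $t$, apply the change of variables $z_{1\ell}\mapsto t^{-1}(z_{1\ell}-z_{n\ell})$ to send $\cj_i^t+\ck_j$ to $\langle z_{11},\ldots,z_{1i}\rangle+\ck_j$, and appeal to the known primality (Lemma~\ref{lem.Schubert4}) of the latter. Your version is in fact slightly cleaner, working with an explicit automorphism of $\C[t,t^{-1},\mathbf{z}]$ and treating all index ranges uniformly, whereas the paper splits into the cases $i=0$, $j=n$, and $1\leq i<j\leq n-1$ and phrases the change of variables as a map into $\C(t)[\mathbf{z}]$.
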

\begin{proof} 
Suppose first that $i=0$ so $\cj_i^t + \ck_j = \ck_j$.  When $j=n$,
$\ck_n=\left< 0 \right>$ and the claim holds trivially.  When $j<n$, Lemma~\ref{lem.Schubert} implies $\ck_j$ is a Schubert determinantal ideal, and is therefore prime.

Next, we suppose $j=n$ so $\cj_i^t+\ck_j = \cj_i^t$.
Recall that the localization map~\eqref{eqn.localization} defines a bijection between saturated prime ideals $\cp\subset \C[t,\mathbf{z}]$ and prime ideals in the localization ring $S^{-1}\C[t,\mathbf{z}]$, defined by $\cp \mapsto S^{-1}\cp$. By Proposition~\ref{prop.saturated}, $\cj_i^t$ is $t$-saturated, so it suffices to show that $S^{-1}\cj_i^t$ is prime in $S^{-1}\C[t, \mathbf{z}]$.
Consider the ring isomorphism
\[
\varphi: S^{-1}\C[t,\mathbf{z}] \to \C(t) [\mathbf{z}]
\]
defined by $\varphi(z_{1j}) = \frac{1}{t}(z_{1j}-z_{nj})$ for $j\in [n]$ and the identity on all other variables, extended algebraically. Now 
\[
\varphi(S^{-1}\cjt_i ) = \left< z_{11}, z_{12}, \ldots, z_{1i} \right>
\]
is prime, so $S^{-1}\cjt_i$ is prime, as desired.

Now assume $1\leq i <j \leq n-1$. The ideal $\cj_i^t+\ck_j$ is saturated by Proposition~\ref{prop.saturated}. By the same reasoning as above, to complete our proof that $\cj_i^t+\ck_j$ is prime, it suffices to show that $\varphi(S^{-1}(\cj_i^t+\ck_j)) = \varphi(S^{-1}\cj_i^t)+\varphi(S^{-1}\ck_j)$ is prime in $\C(t)[\mathbf{z}]$, where we have already shown that $\varphi(S^{-1}\cj_i^t)$ and $\varphi(S^{-1}\ck_j)$ are prime. Since these latter ideals are generated by functions in disjoint sets of variables, their sum is prime. 
\end{proof}

\begin{thm}\label{thm.radical.prod}
The ideal $\cj_i^t \cdot \ck_j \subset \C[t,\mathbf{z}]$ is radical for all $i, j\in [n]$ with $i\leq j$. In particular, $\cjt_i \cdot \ck_j = \cjt_i\cap \ck_j$. 
\end{thm}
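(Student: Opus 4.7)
The plan is to show that $\cjt_i \cdot \ck_j = \cjt_i \cap \ck_j$; this immediately yields radicality since $\cjt_i$ and $\ck_j$ are both prime by Theorem~\ref{thm.prime}, so their intersection is radical. The degenerate cases $i = 0$ or $j = n$ reduce to $0 = 0$, so I may assume $1 \leq i \leq j \leq n - 1$.

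In the nontrivial range, I would work via Gröbner bases with respect to the lexicographic order~\eqref{eqn.order}. Corollary~\ref{cor.Grobner} provides Gröbner bases $G_i$ and $G^j$ for $\cjt_i$ and $\ck_j$, and Corollary~\ref{cor.sq.free} asserts that every leading monomial of a generator in $G_i$ is coprime to every leading monomial of a generator in $G^j$. The technical core is to prove that the set of pairwise products
$$
G_i \cdot G^j := \{ fg : f \in G_i,\, g \in G^j \}
$$
is a Gröbner basis for $\cjt_i \cdot \ck_j$. That $G_i \cdot G^j$ generates the product ideal is immediate. To verify Buchberger's criterion (Theorem~\ref{thm.Grobner}), I would apply Lemma~\ref{lemma.Spoly} to each pair $f_1 g_1,\, f_2 g_2 \in G_i \cdot G^j$: its coprimality hypothesis is exactly Corollary~\ref{cor.sq.free}, and the requirements $S(f_1, f_2) \rightarrow_{G_i} 0$ and $S(g_1, g_2) \rightarrow_{G^j} 0$ follow from $G_i$ and $G^j$ being Gröbner bases. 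Degenerate cases where $f_1 = f_2$ or $g_1 = g_2$ are covered by the same lemma using the trivial reduction $S(f, f) = 0$.

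Once $G_i \cdot G^j$ is a Gröbner basis, coprimality gives $\LT(fg) = \LT(f)\LT(g)$, so
$$
\LT_{\preceq}(\cjt_i \cdot \ck_j) = \LT_{\preceq}(\cjt_i) \cdot \LT_{\preceq}(\ck_j) = \LT_{\preceq}(\cjt_i) \cap \LT_{\preceq}(\ck_j),
$$
where the last equality uses that the product of two monomial ideals with pairwise coprime generators coincides with their intersection. Combining this with the always-valid inclusions $\cjt_i \cdot \ck_j \subseteq \cjt_i \cap \ck_j$ and $\LT_{\preceq}(\cjt_i \cap \ck_j) \subseteq \LT_{\preceq}(\cjt_i) \cap \LT_{\preceq}(\ck_j)$ produces the sandwich $\LT_{\preceq}(\cjt_i \cdot \ck_j) = \LT_{\preceq}(\cjt_i \cap \ck_j)$. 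Since two nested ideals with equal initial ideals must be equal (apply the division algorithm against the Gröbner basis of the smaller), we conclude $\cjt_i \cdot \ck_j = \cjt_i \cap \ck_j$, as required. The primary obstacle is the Buchberger verification for $G_i \cdot G^j$, but with Corollaries~\ref{cor.sq.free} and~\ref{cor.Grobner} in hand, Lemma~\ref{lemma.Spoly} dispatches it directly; the rest is commutative algebra bookkeeping.
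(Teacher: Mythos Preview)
Your proof is correct and shares its technical core with the paper's: both establish that $G_i G^j$ is a Gr\"obner basis for $\cjt_i \cdot \ck_j$ by applying Lemma~\ref{lemma.Spoly} together with the coprimality from Corollary~\ref{cor.sq.free}. The only difference lies in the concluding step. The paper observes that the leading terms of $G_i G^j$ are square-free (Corollary~\ref{cor.sq.free}), so the initial ideal is square-free and hence $\cjt_i \cdot \ck_j$ is radical; the equality $\cjt_i \cdot \ck_j = \cjt_i \cap \ck_j$ then follows from the chain $\rad(\cjt_i \cdot \ck_j) = \rad(\cjt_i \cap \ck_j) = \rad(\cjt_i) \cap \rad(\ck_j)$. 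You instead bypass the square-free argument entirely, comparing initial ideals directly: the coprimality of the monomial generators gives $\LT(\cjt_i)\cdot\LT(\ck_j) = \LT(\cjt_i)\cap\LT(\ck_j)$, and a sandwich with $\LT(\cjt_i \cap \ck_j)$ forces equality of the ideals themselves. Your route is marginally more self-contained (it does not invoke the ``square-free initial ideal implies radical'' fact), while the paper's route makes the radicality visible first; in substance the two arguments are interchangeable.
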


\begin{proof} The statement is trivially true when $j=n$ so we assume $j<n$. To prove the theorem, we prove that 
\begin{align}\label{eqn.G.set}
\{g_kp_B \mid  1 \leq k \leq i, & \,B\subseteq \{2, \ldots, n \}, |B| = j\}\\
\nonumber & \cup \{h_{k\ell} p_B\mid 1 \leq k<\ell \leq i, \,B\subseteq \{2, \ldots, n \}, |B| = j\}
\end{align}
is a Gr\"obner basis for $\cj_i^t \cdot \ck_j$ with respect to the lexicographic monomial order determined by the variable order~\eqref{eqn.order}.  Once this claim has been established, Corollary~\ref{cor.sq.free} implies that $\LT_{\preceq}(\cj_i^t\cdot \ck_j)$ is generated by square-free monomials. Recall an ideal whose initial monomial ideal is square-free is radical, so $\cj_i^t\cdot \ck_j$ is radical.

To prove the claim, we apply Theorem~\ref{thm.Grobner}.  We must compute all possible pairs of $S$-polynomials, each of which is of the form $S(fp,gq)$ for one of the following cases.
\begin{center}
\begin{tabular}{c|c|c|c|c|c}
 & $f$ & $g$ & $p$ & $q$ & index set \\ \hline
\multirow{2}{*}{Case 1}  & \multirow{2}{*}{$g_k$} & \multirow{2}{*}{$g_\ell$} & \multirow{2}{*}{$p_B$} &  \multirow{2}{*}{$p_C$} & $B, C \subseteq \{2, \ldots, n \},\; |B|=|C|=j$\\
& & & & & $1\leq k\leq i,\; 1\leq \ell \leq i$ \\ \hline
\multirow{2}{*}{Case 2} & \multirow{2}{*}{$g_k$} & \multirow{2}{*}{$h_{ab}$} & \multirow{2}{*}{$p_B$} & \multirow{2}{*}{$p_C$} & $B, C \subseteq \{2, \ldots, n \},\; |B|=|C|=j$\\
& & & & & $1\leq k\leq i,\; 1\leq a<b \leq i$ \\ \hline
\multirow{2}{*}{Case 3} & \multirow{2}{*}{$h_{k\ell}$} & \multirow{2}{*}{$h_{ab}$} & \multirow{2}{*}{$p_B$} & \multirow{2}{*}{$p_C$} & $B, C \subseteq \{2, \ldots, n \},\; |B|=|C|=j$\\
& & & & &  $1\leq k<\ell \leq i,\; 1\leq a<b \leq i$ \\ 
\end{tabular}
\end{center}
Let $G_i$ and $G^j$ be as defined in Corollary~\ref{cor.Grobner}, so that
\begin{align*}
    G_i &= \{g_k \mid 1\leq k\leq i\} 
    \cup \{h_{k\ell}\mid 1\leq k< \ell \leq i]\}, \mbox{and}  \\
    G^j &= \{p_B\mid B\subseteq \{2, \dots, n\}, |B|=j\}.
\end{align*} 
In each of the three cases above, $S(p,q)$ has a standard representation with respect to the set $G^j$ and $S(f,g)$ has a standard representation with respect to the set $G_i$  by Corollary~\ref{cor.Grobner}.  
Thus, in each case, Corollary~\ref{cor.sq.free} and Lemma~\ref{lemma.Spoly} together imply that $S(fp,gq)$ has a standard representation with respect to the set $G_iG^j$, given explicitly   in~\eqref{eqn.G.set}. Thus the set $G_iG^j$ is a Gr\"obner basis for $\cj_i^t\cdot \ck_j$. By Corollary~\ref{cor.sq.free}, its initial monomial ideal is square-free, implying that $\cj_i^t\cdot \ck_j$ is radical.

Applying Theorem~\ref{thm.prime} with $j=n$ implies that $\cj_i^t$ is prime and therefore radical.  Similarly, applying Theorem~\ref{thm.prime} with $i=0$ implies that $\ck_j$ is prime and therefore radical.  Since $\cj_i^t\cdot \ck_j$ is radical, we conclude
\[
\cjt_i \cdot \ck_j = \mathrm{rad}(\cjt_i\cdot \ck_j) = \mathrm{rad}(\cjt_i\cap  \ck_j) = \mathrm{rad}(\cjt_i) \cap \mathrm{rad}(\ck_j) = \cjt_i \cap \ck_j,
\]
as desired.
\end{proof}


\section{Flat family over the minimal sheet}\label{sec.FlatMinSheet}
In this section, we prove our main result in Theorem~\ref{thm.flat} by studying the ideal $\tilde{\ci}_{\mx_t, h}$ defining the family $\fX_{\mx_t,h}$ over the minimal sheet.  First, the following lemma tells us that it suffices to consider $\semi = \diag(1,0,\dots,0)$ and the sheet line $\semi_t$ as in~\eqref{eqn.xt.min}.

\begin{lemma}\label{lem.reduction} Up to isomorphism, there are only two distinct families $\fX_{\mx_t, h}$ of matrix Hessenberg schemes over the minimal sheet, namely $\fX_{\semi_t, h}$ and $\fX_{\nilp_t, h}$ where $\semi= \diag(1,0,\dots,0)$ and $\nilp = E_{n-1,n}$. Furthermore, the family $\varphi_{\nilp, h}:\fX_{\nilp_t, h}\to \C[t]$ is trivially flat.  
\end{lemma}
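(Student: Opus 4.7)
The plan is to reduce to two specific Jordan canonical representatives using Remark~\ref{rem.minimal}, then analyze the resulting sheet line in each case. Every $\mx \in \fg_{(2,1^{n-2})}$ is conjugate to either a semisimple-type matrix $\diag(c_1, c_2, \ldots, c_2)$ with $c_1 \neq c_2$, or a nilpotent-type matrix $c I_n + E_{n-1,n}$. Since conjugation of the defining matrix induces an isomorphism of the family $\fX_{\mx_t, h}$ over $\Spec\ \!\C[t]$ (the parametric analog of \cite[Rem.~3.5]{GP-semisimple}, extended from $R=\C$ to $R=\C[t]$), it suffices to verify the claim on each Jordan form representative. The main technical observation used in both cases is \emph{scalar-invariance}: replacing $\mx$ with $\mx + \alpha I_n$ for any $\alpha \in \C[t]$ leaves the rank conditions~\eqref{eqn.rank.cond} unchanged, because $(\mx + \alpha I_n) v_i = \mx v_i + \alpha v_i$ differs from $\mx v_i$ by a multiple of the column $v_i$, which already appears among $v_1, \ldots, v_{h(i)}$ since $h(i) \geq i$.

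For the nilpotent representative $\mx = c I_n + E_{n-1,n}$, the tableau $\ct_\mx$ associated to the single-eigenvalue partition $\mu=(2,1^{n-2})$ directly yields $\nilp_\mx = E_{n-1,n}$, so the sheet line is $\mx_t = t c I_n + E_{n-1,n}$. Applying scalar-invariance with $\alpha = tc$ gives $\ci_{\mx_t, h} = \ci_{E_{n-1,n}, h}$, an ideal lying entirely in $\C[\mathbf{z}] \subset \C[t,\mathbf{z}]$ and independent of $t$. Therefore $\tilde{\ci}_{\mx_t, h}$ is also independent of $t$, and
\[
\C[t,\mathbf{z}]/\tilde{\ci}_{\mx_t, h} \;\cong\; \C[t] \otimes_\C \C[\mathbf{z}]/\tilde{\ci}_{\nilp, h}
\]
is a free (hence flat) $\C[t]$-module. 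This simultaneously identifies every nilpotent-type family with $\fX_{\nilp_t, h}$ and establishes that $\varphi_{\nilp, h}$ is trivially flat, proving the second assertion.

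For the semisimple representative $\mx = \diag(c_1, c_2, \ldots, c_2)$ with $c_1\neq c_2$, a computation analogous to Example~\ref{ex.min.sheet.x} gives $\nilp_\mx = E_{1n}$, so the sheet line is $\mx_t = t\,\mx + E_{1n}$. Scalar-invariance with $\alpha = t c_2$ shows that $\ci_{\mx_t, h}$ equals the ideal determined by the matrix $t(c_1-c_2) E_{11} + E_{1n}$. Since $c_1 \neq c_2$, the $\C$-algebra automorphism $\psi$ of $\C[t,\mathbf{z}]$ sending $t \mapsto t/(c_1-c_2)$ and fixing every $z_{ij}$ carries this ideal to $\ci_{\semi_t, h}$; because $\psi$ fixes the determinant $d$, it intertwines the $d$-saturations defined by~\eqref{eq.definitionItilde}, inducing the desired isomorphism $\fX_{\mx_t, h} \cong \fX_{\semi_t, h}$ (up to linear reparametrization of the base $\Spec\ \!\C[t]$).

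The main subtlety will be verifying that the scalar shifts and the reparametrization lift cleanly from the ideals $\ci_{\mx_t, h}$ to their $d$-saturations $\tilde{\ci}_{\mx_t, h}$. This is automatic, because each operation involved is a $\C$-algebra automorphism (or identity) that preserves the generator $d$ of the multiplicative set used in the saturation, so the induced correspondence on ideals commutes with passage to $\tilde{\ci}$. Once this bookkeeping is in place, the two cases give exactly the two claimed families and the triviality of $\varphi_{\nilp, h}$.
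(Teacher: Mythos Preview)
Your proposal is correct and follows essentially the same route as the paper: reduce to the two Jordan representatives via Remark~\ref{rem.minimal}, use scalar-invariance (Remark~\ref{rem:sameideal}) to strip off $tc_2 I_n$ or $tc I_n$, and then pass to $\semi_t$ by a ring automorphism. The only cosmetic difference is that you rescale the parameter $t\mapsto t/(c_1-c_2)$, whereas the paper instead rescales the first-row variables $z_{1j}\mapsto (c_1-c_2)^{-1}z_{1j}$; both choices carry $\ci_{\mx_t',h}$ to $\ci_{\semi_t,h}$ and preserve $d$ up to a nonzero scalar, so the saturations match.
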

\begin{proof}  In light of Remark~\ref{rem.minimal}, it suffices to consider two cases. First, assume $\mx=\diag(c_1, c_2 \dots , c_2)$ with $c_1\neq c_2$. In this case, we have 
\[
\mx_t = t\diag(c_1,c_2,\dots,c_2)+E_{1n}.
\]
The rank conditions of \eqref{eqn.rank.cond} are satisfied for $\mx_t$ if and only if they are satisfied for 
$$\mx_t'=t\diag(c_1-c_2,0, \dots, 0)+E_{1n} = t(c_1-c_2)\semi+E_{1n}.$$  
It follows from Remark~\ref{rem:sameideal} that, for any Hessenberg function $h$, we have an equality $\ci_{\mx'_t,h}=\ci_{\mx_t,h}$. Recall from  Example~\ref{ex.min.sheet.x} that $\semi_t = t\semi+E_{1n}$. 
Consider the ring isomorphism $\C[\mathbf{z}, t] \to \C[\mathbf{z}, t]$ defined by $z_{1j} \mapsto (c_1-c_2)^{-1}z_{1j}$ for all $1\leq j \leq n$ and $z_{ij}\mapsto z_{ij}$ for all $1\leq i \leq n$ and $2\leq j \leq n$.  Comparing the rank conditions of~\eqref{eqn.rank.cond} for $\mx'_t$ and $\semi_t$, it is obvious that this isomorphism maps the ideal $\ci_{\mx'_t,h}$ isomorphically onto $\ci_{\semi_t, h}$.  

Next, suppose $\mx = cI_n+E_{n-1, n}$ for some $c\in \C$. In this case we have $\mx_t = tcI_n + E_{n-1, n}$.  The rank inequalities of \eqref{eqn.rank.cond} are satisfied for $\mx_t$ if and only if they are satisfied for $\nilp_t=E_{n-1, n}$. It follows from Remark~\ref{rem:sameideal} that, for any $h$, we have an equality $\fX_{\mx_t, h} = \fX_{\nilp_t, h}$. As $\nilp_t=\nilp$ the family $\varphi_{\nilp, h}$ is trivial and therefore flat.
\end{proof}

\subsection{Associated primes of $\fX_{\semi_t, h}$ over the minimal sheet} \label{se:AssociatedPrimesMinSheet} 
We assume from now on that $\semi_t$ is the sheet line~\eqref{eqn.xt.min} in the minimal sheet $\fg_{(2, 1^{n-2})}$. Our next theorem computes the associated primes of $\ci_{\semi_t, h}$ and proves $\tilde{\ci}_{\semi_t, h} = \ci_{\semi_t, h}$. For a given Hessenberg function $h$, define 
\begin{eqnarray}\label{eqn.prime.def}
\cpt_{i,h} := \cjt_{i-1}+\ck_{h(i)}. 
\end{eqnarray}
Since $i-1<i\leq h(i)$ for all $i$, Theorem~\ref{thm.prime} implies $\cpt_{i,h}$ is a prime ideal.  

Let 
\[
\cp_{i,h}^a:= \ev_a(\cpt_{i,h}) = \cj_{i-1}^a + \ck_{h(i)}, \; a\in \C
\] 
be the image of the prime ideals defined in~\eqref{eqn.prime.def} under the evaluation homomorphism. By Lemma~\ref{lem.ev2}, the ideal $\cp_{i,h}^a$ is prime whenever $a\neq 0$. More specifically, Lemma~\ref{lem.ev2}(2) tells us $\Spec\ \!\C[\mathbf{z}]/\cp_{i,h}^a$ is isomorphic to the matrix Richardson variety $\cx_{u[i+1]}^{\op} \cap \cx_{w_0v[j]}$. The authors proved in~\cite{GP-semisimple} that each irreducible component of $\fX_{\semi, h}$ is of this form and identify these components precisely in terms of the corners of $h$.  The theorem below generalizes that result from the semisimple matrix Hessenberg scheme $\fX_{\semi, h}$ to the family $\fX_{\semi_t, h}$. In the setting of the family considered here, the hard work has already been carried out in Section~\ref{sec.technical}. With the results of Theorem~\ref{thm.prime} and Theorem~\ref{thm.radical.prod} in place, the proof of Theorem~\ref{thm.assprimes} proceeds in a similar manner as that of~\cite[Thm.~4.1]{GP-semisimple}.

\begin{thm}\label{thm.assprimes}  Suppose $\semi_t$ is the line of matrices in the minimal sheet $\fg_{(2,1^{n-2})}$, defined as in~\eqref{eqn.xt.min}. For any Hessenberg function $h:[n]\to[n]$, 
\begin{equation}\label{eqn.intersection}
\ci_{\semi_t, h}  =  \bigcap_{i\in \cc(h)} \cpt_{i,h}.
\end{equation}
In particular, $\tilde{\ci}_{\semi_t, h} = \ci_{\mx_t, h}$, the affine scheme $\fX_{\semi_t,h}$ is reduced, and $\left\{\cpt_{i,h} \mid i\in \cc(h)\right\}$ is the set of associated primes of $\tilde{\ci}_{\semi_t, h}$.
\end{thm}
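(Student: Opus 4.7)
The plan is to establish the ideal equality~\eqref{eqn.intersection} directly via an algebraic manipulation, and then deduce the three remaining statements as formal consequences.

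First I would label the corners of $h$ as $1 = i_1 < i_2 < \cdots < i_r$ and set $m_k := i_k^*$ and $j_k := h(i_k)$, with the conventions $m_0 = 0$, $m_r = n$, and $j_r = n$ (the last because $h(n) = n$). Since $i_{k+1} = m_k + 1$, the prime ideal $\cpt_{i_k, h}$ takes the form $\cjt_{m_{k-1}} + \ck_{j_k}$, and combining Lemma~\ref{lemma.generators} with Theorem~\ref{thm.radical.prod} (applicable because $m_k \leq j_k$) yields
\[
\ci_{\semi_t, h} \;=\; \sum_{k=1}^r \cjt_{m_k} \cdot \ck_{j_k} \;=\; \sum_{k=1}^r \bigl(\cjt_{m_k} \cap \ck_{j_k}\bigr).
\]
The theorem therefore reduces to proving the identity
\[
\bigcap_{k=1}^r \bigl(\cjt_{m_{k-1}} + \ck_{j_k}\bigr) \;=\; \sum_{k=1}^r \bigl(\cjt_{m_k} \cap \ck_{j_k}\bigr).
\]

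I would prove this identity by induction on $r$, strengthening the statement to track boundary terms: for any chains $m_0 \leq m_1 \leq \cdots \leq m_r$ and $j_1 \leq \cdots \leq j_r$ satisfying the required inequalities,
\[
\bigcap_{k=1}^r \bigl(\cjt_{m_{k-1}} + \ck_{j_k}\bigr) \;=\; \cjt_{m_0} \,+\, \sum_{k=1}^{r-1} \bigl(\cjt_{m_k} \cap \ck_{j_k}\bigr) \,+\, \ck_{j_r}.
\]
The specialization $m_0 = 0$ and $j_r = n$ recovers the desired identity after noting that $\cjt_{m_r} \cap \ck_{j_r} = 0$. The inductive step intersects the $(r-1)$-fold expression with the new outer factor $\cjt_{m_{r-1}} + \ck_{j_r}$ and applies the modular law for ideals twice: first using $\ck_{j_r} \subseteq \ck_{j_{r-1}}$ to extract the new boundary summand $\ck_{j_r}$, then using $\cjt_{m_k} \subseteq \cjt_{m_{r-1}}$ for $k \leq r-1$ to absorb the leading summands into a single intersection with $\cjt_{m_{r-1}}$. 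Both families of containments are instances of Lemma~\ref{lemma.inclusions}(1). The induction is not conceptually difficult, but the bookkeeping is delicate, and this is where I expect most of the writing to go.

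With the equality~\eqref{eqn.intersection} established, the three remaining conclusions follow formally. Each $\cpt_{i_k, h}$ is prime by Theorem~\ref{thm.prime}, so $\ci_{\semi_t, h}$ is a finite intersection of primes and hence radical, giving the reducedness of $\fX_{\semi_t, h}$. For the saturation claim $\tilde{\ci}_{\semi_t, h} = \ci_{\semi_t, h}$, it suffices to show $d \notin \cpt_{i_k, h}$ for each $k$: for any $a \neq 0$, Lemma~\ref{lem.ev2}(2) identifies $\psi_a(\cp_{i_k, h}^a)$ with the defining ideal of the matrix Richardson variety $\cx^{\op}_{u[i_k]} \cap \cx_{w_0 v[j_k]}$, which contains the invertible permutation matrix $\dot{u}[i_k]$, so the vanishing locus of $\cpt_{i_k, h}$ meets $\A^1 \times GL_n(\C)$. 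A prime ideal not containing $d$ is automatically $d$-saturated, and $d$-saturation commutes with finite intersection, so $\tilde{\ci}_{\semi_t, h} = \bigcap_k \cpt_{i_k, h} = \ci_{\semi_t, h}$. Finally, the primes $\cpt_{i_k, h}$ are pairwise incomparable: for $k < \ell$ we have both $i_k - 1 < i_\ell - 1$ and $h(i_k) < h(i_\ell)$, which precludes either containment by Lemma~\ref{lemma.inclusions}(2). Hence~\eqref{eqn.intersection} is a minimal primary decomposition, and $\{\cpt_{i_k, h} : i_k \in \cc(h)\}$ is precisely the set of associated primes of $\tilde{\ci}_{\semi_t, h}$.
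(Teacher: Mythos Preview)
Your proposal is correct and follows essentially the same route as the paper's proof: both start from Lemma~\ref{lemma.generators} and Theorem~\ref{thm.radical.prod} to write $\ci_{\semi_t,h}$ as a sum of intersections, then induct on the number of corners using two applications of the modular law, and finally read off reducedness, the associated primes, and the saturation statement formally. The one packaging difference is that the paper runs the induction by replacing $h$ with an auxiliary Hessenberg function $h'$ obtained by deleting the top corner, whereas you abstract away from $h$ entirely and prove a strengthened identity $\bigcap_{k=1}^r(\cjt_{m_{k-1}}+\ck_{j_k})=\cjt_{m_0}+\sum_{k=1}^{r-1}(\cjt_{m_k}\cap\ck_{j_k})+\ck_{j_r}$ for arbitrary compatible chains; this is a clean way to avoid introducing $h'$, at the cost of carrying the boundary terms through the induction. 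The saturation argument is likewise the same in content (both come down to exhibiting an invertible matrix in the Richardson variety of Lemma~\ref{lem.Schubert4}), though the paper routes this through Lemma~\ref{lem.ev2} and the observation $\psi_a(d)\in\C^* d$ rather than naming the point $\dot u[i_k]$ explicitly.
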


Our proof of Theorem~\ref{thm.assprimes} uses the following well known fact.

\begin{lemma}\label{lem.ideal.ops} For all ideals $\ci$, $\cj$, and $\ck$ in a commutative ring $R$, we have $(\ci\cap \cj)+(\ci\cap \ck) \subseteq \ci\cap (\cj+\ck)$.  Equality holds whenever either $\cj \subseteq \ci$ or $\ck\subseteq \ci$.
\end{lemma}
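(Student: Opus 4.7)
The plan is to prove the forward inclusion directly from the definitions of sum and intersection of ideals, and then verify the equality under the stated hypothesis by a standard argument of modular law type.

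For the inclusion $(\ci\cap \cj)+(\ci\cap \ck) \subseteq \ci\cap (\cj+\ck)$, I would take an arbitrary element $x$ of the left-hand side, written as $x = a+b$ with $a\in \ci\cap\cj$ and $b\in \ci\cap\ck$. Since $a,b\in \ci$ and $\ci$ is closed under addition, $x=a+b\in \ci$. Since $a\in \cj$ and $b\in \ck$, the sum $x=a+b$ lies in $\cj+\ck$ by the definition of the sum ideal. Hence $x\in \ci\cap(\cj+\ck)$. This inclusion holds for all triples of ideals with no extra hypothesis.

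For the equality when $\cj\subseteq \ci$ (the case $\ck\subseteq \ci$ is symmetric), I would verify the reverse inclusion $\ci\cap(\cj+\ck)\subseteq (\ci\cap\cj)+(\ci\cap\ck)$. Given $x\in \ci\cap(\cj+\ck)$, I would write $x=j+k$ with $j\in\cj$ and $k\in\ck$. The hypothesis $\cj\subseteq\ci$ gives $j\in \ci\cap\cj$ immediately. Moreover, $k = x - j$ where both $x$ and $j$ lie in $\ci$, so $k\in \ci$, and hence $k\in\ci\cap\ck$. Thus $x=j+k$ exhibits $x$ as an element of $(\ci\cap\cj)+(\ci\cap\ck)$, completing the proof.

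There is no serious obstacle here: the argument is the classical modular law for ideals (or for submodules of a module), and the whole proof is just elementwise bookkeeping. The only subtlety worth noting is that the equality genuinely requires the extra hypothesis; without it, the inclusion can be strict (for instance in $\Z$ with $\ci=(6)$, $\cj=(2)$, $\ck=(3)$, where $\cj+\ck=(1)$ so $\ci\cap(\cj+\ck)=(6)$ while $(\ci\cap\cj)+(\ci\cap\ck)=(6)+(6)=(6)$ happens to coincide — a better contrasting example would use ideals where neither $\cj$ nor $\ck$ sits inside $\ci$). No such examples are needed in the paper itself, so I would simply present the two short arguments above.
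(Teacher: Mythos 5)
Your proof is correct: the paper states this lemma as a well-known fact and omits the proof entirely, and your elementwise argument (the modular law for ideals) is exactly the standard justification being invoked. Both inclusions are verified correctly, including the key step $k = x - j \in \ci$ in the reverse direction, so there is nothing to add.
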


We are now ready to prove the theorem. 

\begin{proof}[Proof of Theorem~\ref{thm.assprimes}] Applying Lemma~\ref{lemma.generators} and Theorem~\ref{thm.radical.prod} we have
\begin{eqnarray}\label{eqn.decomp}
\ci_{\semi_t, h}  = \sum_{i\in \cc(h)} \left( \cjt_{i^*} \cap \ck_{h(i)} \right).
\end{eqnarray}
Proceed by induction on the number of corners of $h$. If $h$ has one corner then $h=(n,n,\ldots,n)$ and the assertion holds trivially.

Now suppose that $h$ has at least two corners.  Let $m=\max\{i\in \cc(h) \mid h(i)<n\}$, so the largest corner of $h$ is $m^*+1$.  Define $h':[n]\to [n]$ to be the Hessenberg function such that $h'(i)=h(i)$ for all $i<m$ and $h'(i)=n$ for all $i\geq m$.  Then $\cc(h') = \cc(h) \setminus \{m^*+1\}$ and $m$ is the largest corner of $h'$.  Notice that $h'(m)=n$, while $h(m)<n$ by definition.  This means that the corner $m$ contributes a term to the RHS of~\eqref{eqn.decomp} for $\ci_{\semi_t, h}$, but not for $\ci_{\semi_t, h'}$.  Since $h'(i)=h(i)$ for all $i<m$,
\begin{eqnarray}
\label{eqn.ci}
\sum_{i\in \cc(h)} \left( \cjt_{i^*} \cap \ck_{h(i)} \right) = \left( \sum_{i\in \cc(h')}  \left( \cjt_{i^*} \cap \ck_{h'(i)} \right) \right) + \left( \cjt_{m^*} \cap \ck_{h(m)} \right).
\end{eqnarray}
To simplify notation for the remainder of the proof, we write 
\begin{eqnarray}\label{eqn.cldef}
\cl = \bigcap_{\substack{i\in \cc(h)\\i<m}}\cpt_{i, h}. 
\end{eqnarray}
Note that as $h(i)\leq h(m)$ for all $i<m$ we have $\ck_{h(m)} \subseteq \cjt_{i-1}+\ck_{h(i)} = \cpt_{i,h}$ and we conclude that $\ck_{h(m)} \subseteq \cl$.

Applying the induction hypothesis and using the fact that $\cpt_{i,h'} = \cpt_{i, h}$ for all $i<m$ and that $\cpt_{m, h'} = \cjt_{m-1}+\ck_{h'(m)} = \cjt_{m-1}$ we have,
\begin{eqnarray}\label{eqn.IH}
 \sum_{i\in \cc(h')}  \left( \cjt_{i^*} \cap \ck_{h'(i)} \right) = \bigcap_{i\in \cc(h')} \cpt_{i, h'} = \left( \bigcap_{\substack{i\in \cc(h)\\i<m}}\cpt_{i, h} \right) \cap  \cjt_{m-1} = \cl \cap \cjt_{m-1}.
\end{eqnarray}
Combining equations~\eqref{eqn.ci} and~\eqref{eqn.IH} we have
\begin{eqnarray*}
\sum_{i\in \cc(h)} \left( \cjt_{i^*} \cap \ck_{h(i)} \right)  &=& \left( \cl \cap  \cjt_{m-1} \right) + (\cjt_{m^*}\cap \ck_{h(m)}) \\
&=& \left( \cl \cap  \cjt_{m-1} \cap \cjt_{m^*} \right) + (\cjt_{m^*}\cap \ck_{h(m)}) \\
&&\quad\quad\quad\quad\quad\textup{ since $\cjt_{m-1}\subseteq \cjt_{m^*}$ as $m-1<m\leq m^*$}\\
&=& \cjt_{m^*} \cap \left[ \left( \cl \cap  \cjt_{m-1} \right) +\ck_{h(m)} \right]\\
&&\quad\quad\quad\quad\quad \textup{ by Lemma~\ref{lem.ideal.ops}, as $\cl\cap \cjt_{m-1}\subseteq \cjt_{m^*}$} \\
&=&\cjt_{m^*} \cap \left[ \left( \cl \cap  \cjt_{m-1} \right) + (\cl \cap \ck_{h(m)}) \right] \quad  \textup{ since $\ck_{h(m)} \subseteq \cl$}\\
&=& \cjt_{m^*} \cap \left[ \cl \cap \left( \cj_{m-1}^t + \ck_{h(m)} \right) \right]  \quad \textup{ by Lemma~\ref{lem.ideal.ops}, as $\ck_{h(m)} \subseteq \cl$} \\
&=& \cl \cap  \left( \cj_{m-1}^t + \ck_{h(m)} \right)  \cap \cjt_{m^*} \\ 
&=& \bigcap_{i\in \cc(h)} \cpt_{i,h}
\end{eqnarray*}
where the last equality follows from~\eqref{eqn.cldef} and $\cpt_{m,h} = \cj_{m-1}^t + \ck_{h(m)} $ and $\cpt_{m^*+1, h} =  \cjt_{m^*}$. This proves~\eqref{eqn.intersection}. 

Each $\cpt_{i,h} = \cjt_{i-1}+\cjt_{h(i)}$ is prime by Theorem~\ref{thm.prime}, so~\eqref{eqn.intersection} implies that $\ci_{\mx_t, h}$ is an intersection of prime ideals and therefore radical.  By Lemma~\ref{lemma.inclusions}, $\cpt_{i,h}\subseteq \cpt_{j,h}$ if and only if $i\leq j$ and $h(i)\geq h(j)$.  This can never occur for distinct corners $i,j\in \cc(h)$  so the primes in the set $\left\{\cpt_{i,h} \mid i\in \cc(h)\right\}$ are distinct and none is contained in any of the others.  Thus~\eqref{eqn.intersection} implies that this is the set of associated primes of $\ci_{\semi_t, h}$. 

Recall that $d\in \C[\mathbf{z}]$ denotes the determinant function of a generic $n\times n$ matrix $Z=(z_{ij})_{1\leq i,j \leq n}$.  We have $\ev_a(d)=d$ where $\ev_a$ is the evaluation homomorphism of~\eqref{eqn.evaluationhom}.  Furthermore, for all $a\in \C^*$  we have that $\psi_a(d)$ is a constant multiple of $d$, where $\psi_a$ is the ring isomorphism from the statement of Lemma~\ref{lem.ev2}.  Together with Lemma~\ref{lem.ev2}(2), these observations imply that if $d\in \cpt_{i,h}$ then $d\in w_0\cdot\cj_{i-1}^0 + K_{h(i)}$, contradicting Lemma~\ref{lem.Schubert4}.  This proves $d\notin \cpt_{i,h}$ for all $i$ and thus $\tilde{\cp}^t_{i,h} = \cp^t_{i,h}$.  Since the localization map $\iota$ of equation~\eqref{eqn.GLn.coord} is injective it preserves intersections of ideals, and the previous observation implies $\tilde{\ci}_{\semi_t, h} = \ci_{\semi_t, h}$. The remaining statements for $\tilde{\ci}_{\semi_t,h}$ now follow from those proved for $\ci_{\semi_t, h}$ above.
\end{proof}

Our proof that $\varphi_{\mx, h}$ is flat follows immediately.

\begin{proof}[Proof of Theorem~\ref{thm.flat}] By Lemma~\ref{lem.reduction} it suffices to consider the family $\varphi_{\semi, h}:\fX_{\semi_t, h}\to \Spec\ \!\C[t]$ where $\semi = \diag (1,0,\dots, 0)$. According to Lemma~\ref{lemma.torsion}, we have only to prove that, for any linear polynomial $t+a\in \C[t]$ and any $f\in \C[t, \mathbf{z}]$, if $(t+a) f \in \ci_{\semi_t, h}$ then $f \in \ci_{\semi_t, h}$.  Since the family is reduced, its defining ideal is the intersection of its associated prime ideals. Thus by Theorem~\ref{thm.assprimes}, $(t+a)f \in \cpt_{i,h}$ for all $i\in \cc(h)$.  As each of ideals $ \cpt_{i,h}$ is prime and $t+a \notin \cpt_{i,h}$, we have $f \in  \cpt_{i,h}$ for all $i\in \cc(h)$.  Thus $f \in \ci_{\semi_t, h}$, as desired.
\end{proof}

Since our family of matrix Hessenberg schemes of $\fg_{(2,1^{n-2})}$ is defined by the ``intuitive ideal'' the same is true for each fiber.

\begin{cor}\label{cor.fiber.ideal} For all $\mx\in \fg_{(2,1^{n-2})}$ and all Hessenberg functions $h: [n]\to [n]$, we have $\tilde{\ci}_{\mx, h} = \ci_{\mx,h}$.  
\end{cor}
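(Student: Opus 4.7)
The plan is to reduce via Lemma~\ref{lem:equality} to the two distinguished matrices $\semi=\diag(1,0,\dots,0)$ and $\nilp=E_{1,n}$, then extract the saturation property by specializing the parameter $t$ in Theorem~\ref{thm.assprimes}. First, I would invoke Lemma~\ref{lem:equality}: every matrix Hessenberg scheme in $\fg_{(2,1^{n-2})}$ equals, up to a ring automorphism $\psi$ of $\C[\mathbf{z}]$ arising from either conjugation by some $g\in GL_n(\C)$ or rescaling of the first row, either $\fX_{\semi,h}$ or $\fX_{\nilp,h}$. Each such $\psi$ satisfies $\psi(d)=\lambda d$ for some $\lambda\in \C^\times$, so $\psi(\tilde{\ci})=\widetilde{\psi(\ci)}$ for any ideal $\ci$, and the property $\ci_{\mx,h}=\tilde{\ci}_{\mx,h}$ transfers along $\psi$. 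It therefore suffices to prove the identity for $\mx\in\{\semi,\nilp\}$.

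Next, I would observe that both representatives arise as specializations of the sheet line: $\nilp=\semi_0$ directly, while $\semi$ is conjugate to $\semi_1$ by Lemma~\ref{lemma.conjugate}. A second application of the automorphism argument reduces everything to showing $\ci_{\semi_a,h}=\tilde{\ci}_{\semi_a,h}$ for $a\in\{0,1\}$. By Theorem~\ref{thm.assprimes}, $\ci_{\semi_t,h}=\bigcap_{i\in\cc(h)}\cpt_{i,h}$ with each $\cpt_{i,h}=\cjt_{i-1}+\ck_{h(i)}$ prime and $d$-saturated in $\C[t,\mathbf{z}]$. In particular neither $d$ nor $t-a$ lies in any $\cpt_{i,h}$, so both are nonzerodivisors on each domain $\C[t,\mathbf{z}]/\cpt_{i,h}$.

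The key step is to upgrade these individual nonzerodivisor statements to a regular sequence statement. I would argue that $(d,t-a)$ is a regular sequence on $\C[t,\mathbf{z}]/\cpt_{i,h}$ via a dimension count: the projection $V(\cpt_{i,h})\to\Spec\,\C[t]$ is dominant (so $V(t-a)$ cuts a hyperplane section in codimension one), and $d$ does not vanish identically on any fiber, since, by Lemma~\ref{lem.ev2}(2) together with Lemma~\ref{lem.Schubert4}, each $V(\ev_a(\cpt_{i,h}))$ is (up to the automorphism $\psi_a$ for $a\neq 0$, or directly for $a=0$) a matrix Richardson-type variety meeting $GL_n(\C)$. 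Combined with the flatness of $\varphi_{\semi,h}$ from Theorem~\ref{thm.flat}, the regular sequence property descends through the embedding $\C[t,\mathbf{z}]/\ci_{\semi_t,h}\hookrightarrow\prod_i\C[t,\mathbf{z}]/\cpt_{i,h}$ to force $d$ to be a nonzerodivisor on the fiber $\C[\mathbf{z}]/\ci_{\semi_a,h}$, which is precisely the assertion $\ci_{\semi_a,h}=\tilde{\ci}_{\semi_a,h}$.

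The main obstacle is justifying the descent from the regular sequence on each $\cpt_{i,h}$ to the corresponding statement on $\ci_{\semi_t,h}$, and then propagating it to the specialization at $t=a$. For $a\neq 0$ the argument is essentially automatic because $\ev_a$ preserves primality of the components via $\psi_a$. For $a=0$ the ideal $\ev_0(\cpt_{i,h})=\cj^0_{i-1}+\ck_{h(i)}$ is no longer prime in general, so the descent requires showing that all associated primes of this specialized ideal avoid $d$; this can be handled by explicitly exhibiting invertible matrices in each irreducible component of its vanishing set, thereby establishing that $d$ is a nonzerodivisor modulo $\cp^0_{i,h}$ and completing the proof.
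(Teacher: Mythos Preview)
Your reduction via Lemma~\ref{lem:equality} to the two representatives $\semi$ and $\nilp$, and then to the specializations $\semi_a$ for $a\in\{0,1\}$, is correct and matches the paper's opening move. The subsequent argument, however, has a genuine gap.

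The problem is the ``descent'' step. From Theorem~\ref{thm.assprimes} you know $\ci_{\semi_t,h}=\bigcap_{i\in\cc(h)}\cpt_{i,h}$ and that $d$ avoids every $\cpt_{i,h}$, so $d$ is a nonzerodivisor on $R:=\C[t,\mathbf{z}]/\ci_{\semi_t,h}$. What you need is that $d$ is a nonzerodivisor on the \emph{fiber} $R/(t-a)R\cong\C[\mathbf{z}]/\ci_{\semi_a,h}$. Establishing that $(d,t-a)$ is a regular sequence on each $\C[t,\mathbf{z}]/\cpt_{i,h}$ does not yield this: the injection $R\hookrightarrow\prod_i\C[t,\mathbf{z}]/\cpt_{i,h}$ need not remain injective after tensoring with $\C[t]/\langle t-a\rangle$, because the cokernel need not be $\C[t]$-flat. (For a toy model, intersect $\langle x\rangle$ and $\langle x-t\rangle$ in $\C[t,x]$: the intersection specializes at $t=0$ to $\langle x^2\rangle$, and the diagonal map to the product of quotient domains kills the class of $x$.) Invoking flatness of $\varphi_{\semi,h}$ only gives that $t-a$ is a nonzerodivisor on $R$; it does not let you permute $(t-a,d)$ into a regular sequence without, say, a Cohen--Macaulay hypothesis that you do not have.

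Your fallback---controlling the associated primes of each $\cp^0_{i,h}$---also fails to close the gap, because $\ci_{\nilp,h}$ is \emph{not} $\bigcap_{i\in\cc(h)}\cp^0_{i,h}$ when $h$ is decomposable: evaluation at $t=0$ does not commute with intersection. Example~\ref{ex.1234} exhibits associated primes of $\ci_{\nilp,(1,2,3,4)}$ that are not among the $\cp^0_{i,h}$ at all, so knowing $d$ avoids the latter says nothing about $d$ being a nonzerodivisor modulo $\ci_{\nilp,h}$. (Incidentally, exhibiting invertible matrices in each component only rules out $d$ from \emph{minimal} primes, not embedded ones.)

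The paper's route sidesteps all of this. Lemma~\ref{lemma.fibers} already identifies the scheme-theoretic fiber of $\varphi_{\semi,h}$ over $\langle t-a\rangle$ as $\Spec\C[\mathbf{z}]/\tilde{\ci}_{\semi_a,h}$ with $\tilde{\ci}_{\semi_a,h}=\ev_a(\tilde{\ci}_{\semi_t,h})$. Since Theorem~\ref{thm.assprimes} gives $\tilde{\ci}_{\semi_t,h}=\ci_{\semi_t,h}$, one simply evaluates: $\tilde{\ci}_{\semi_a,h}=\ev_a(\ci_{\semi_t,h})=\ci_{\semi_a,h}$. No component-wise analysis of the special fiber is required.
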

\begin{proof} By Lemma~\ref{lem:equality}, it suffices to assume $\mx = \semi = \diag (1,0,\dots, 0)$ or $\mx = \nilp = E_{1n}$.   Suppose $\semi_t$ is the line of matrices in the minimal sheet $\fg_{(2,1^{n-2})}$, defined as in~\eqref{eqn.xt.min}. By Lemma~\ref{lemma.fibers} the scheme-theoretic fiber of $\varphi_{\semi, h}$ is $\Spec\ \!\C[\mathbf{z}] / \tilde{\ci}_{\semi_a, h}$ and $\tilde{\ci}_{\semi_a, h} = \ev_a(\tilde{\ci}_{\semi_t, h})$. By Theorem~\ref{thm.assprimes}, we get $\tilde{\ci}_{\semi_a, h} = \ev_a(\ci_{\semi_t, h}) = \ci_{\semi_a, h}$ for all $a\in \C$.  Since $\semi_a$ and  $a\semi$ are conjugate matrices the previous conclusion implies $\tilde{\ci}_{a\semi} = \ci_{a\semi}$. Finally, it is straightforward to check that $\ci_{a\semi, h} = \ci_{\semi, h}$ for all $a\neq0$ (see Remark~\ref{rem:sameideal}), and thus $\tilde{\ci}_{\semi, h} = \ci_{\semi, h}$ as desired. Since $\semi_0=\nilp$, we also have $\tilde{\ci}_{\nilp ,h} = \ci_{\nilp ,h}$. 
\end{proof}

Recall from Section~\ref{sec.flat} that flat degeneration preserves the (multigraded) Hilbert series, and thus both the dimension and the cohomology of the matrix Hessenberg schemes over the minimal sheet remain constant across fibers; see Lemma~\ref{lem.flat} and Remark~\ref{rem.equalcohomologyclasses}.
We use Theorem~\ref{thm.flat} to compute these invariants of matrix Hessenberg schemes over the minimal sheet. The first result follows immediately from~\cite[Cor.~4.5]{GP-semisimple} and the second from~\cite[Cor.~4.6]{GP-semisimple}.

\begin{cor} \label{cor.dimension} For all $\mx\in \fg_{(2, 1^{n-2})}$,  
\[
\dim \fX_{\mx, h} = \frac{n(n+1)}{2} + \frac{(n-1)(n-2)}{2} + \max_{i\in \cc(h)} \{ h(i)-i \}.
\]
\end{cor}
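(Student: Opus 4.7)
The plan is to combine Lemma~\ref{lem:equality}, Theorem~\ref{thm.flat}, and the already-established dimension formula for the semisimple matrix Hessenberg scheme \cite[Cor.~4.5]{GP-semisimple} to obtain the formula for every $\mx$ in the minimal sheet. The strategy is essentially a bookkeeping argument: use flatness to transport the known dimension across all fibers of $\varphi_{\semi,h}$, then invoke the isomorphism classification on the minimal sheet.

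First, by Lemma~\ref{lem:equality}, any matrix Hessenberg scheme $\fX_{\mx,h}$ with $\mx\in \fg_{(2,1^{n-2})}$ is isomorphic either to $\fX_{\semi,h}$ (where $\semi=\diag(1,0,\dots,0)$) or to $\fX_{\nilp,h}$ (where $\nilp=E_{1n}$). Since isomorphic schemes have the same dimension, it suffices to verify the displayed formula in these two cases. For $\fX_{\semi,h}$, the formula is precisely \cite[Cor.~4.5]{GP-semisimple}, so no further work is needed there.

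For the nilpotent case, the idea is to apply Theorem~\ref{thm.flat} to the sheet line $\semi_t=t\semi+E_{1n}$ associated to $\semi$ (see Example~\ref{ex.min.sheet.x}). This yields a flat morphism $\varphi_{\semi,h}\colon \fX_{\semi_t,h}\to \Spec\ \!\C[t]$. By Lemma~\ref{lemma.fibers}, the scheme-theoretic fiber over any $a\neq 0$ is isomorphic to $\fX_{\semi,h}$, while the fiber over $0$ is $\fX_{\nilp_\semi,h}=\fX_{\nilp,h}$ (since the associated nilpotent $\nilp_\semi$ computed in Example~\ref{ex.min.sheet.x} equals $E_{1n}=\nilp$). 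Lemma~\ref{lem.flat}(1) then gives $\dim \fX_{\nilp,h}=\dim \fX_{\semi,h}$, and substituting the known formula completes the proof.

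I do not anticipate any real obstacle: every ingredient has been prepared in the preceding sections. The only point that requires care is ensuring that ``dimension'' is interpreted consistently (as Krull dimension of the coordinate ring of the scheme-theoretic fiber), so that Lemma~\ref{lem.flat}(1) applies verbatim to the special fiber $\fX_{\nilp,h}$, which a priori need not be reduced. This is fine because $\tilde\ci_{\nilp,h}=\ci_{\nilp,h}$ by Corollary~\ref{cor.fiber.ideal}, and in any case dimension equality for flat families holds at the scheme level.
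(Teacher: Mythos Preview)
Your proposal is correct and follows essentially the same approach as the paper: the paper also derives the dimension formula by combining Theorem~\ref{thm.flat} with Lemma~\ref{lem.flat}(1) to equate dimensions across fibers, then cites \cite[Cor.~4.5]{GP-semisimple} for the semisimple case. Your explicit invocation of Lemma~\ref{lem:equality} and Lemma~\ref{lemma.fibers} simply spells out details the paper leaves implicit.
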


\begin{cor} \label{cor.cohomclass}  Suppose $n\geq 3$, let $h$ be a Hessenberg function and $d_h= \max_i \{h(i)-i\}$.  For all $\mx\in \fg_{(2, 1^{n-2})}$, 
the cohomology class of the Hessenberg variety $Y_{\mx, h}$ is 
$$
[Y_{\mx, h}]= 
\begin{cases}
\displaystyle{\ \sum_{i=1}^{n-1}\ 2\ \mathfrak{S}_{w[i+1,i]}} & \mbox{if $h=(1,2,\dots, n)$}\\
\\
 \displaystyle{\sum_{\substack{i\in \cc(h)\ \mbox{\tiny{s.t.}}\\ h(i)-i=d_h}} \mathfrak{S}_{w[ i, h(i)]} }& \mbox{if $h\neq (1,2,\dots, n)$.}
\end{cases}
$$
where $w[i,j]$ is the shortest permutation $w$ in $S_n$ with $w(i)=1$ and $w(j) = n$.
\end{cor}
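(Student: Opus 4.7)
The plan is to deduce this corollary from the flatness of the family established in Theorem~\ref{thm.flat}, together with the cohomology class computation for the semisimple case carried out in the companion paper \cite{GP-semisimple}. The essential observation is that flat graded families preserve cohomology classes (Remark~\ref{rem.equalcohomologyclasses}), so every matrix Hessenberg scheme in a given minimal sheet family represents the same class.

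First I would invoke Lemma~\ref{lem:equality} to reduce to the two representative matrices $\semi = \diag(1, 0, \dots, 0)$ and $\nilp = E_{1n}$: every minimal matrix Hessenberg scheme is isomorphic to $\fX_{\semi, h}$ or $\fX_{\nilp, h}$, so it suffices to compute $[Y_{\semi, h}]$ and $[Y_{\nilp, h}]$. Next, I would check that the defining ideal $\ci_{\semi_t, h}$ is homogeneous in $\C[t, \mathbf{z}]$ with respect to the multigrading $\deg(t) = 0$ and $\deg(z_{ij}) = e_j$: the linear generators $t z_{1j} + z_{nj}$ of $\cjt_{i^*}$ have multidegree $e_j$, and the minors $p_B$ generating $\ck_{h(i)}$ are homogeneous in the same grading by construction, hence (by Lemma~\ref{lemma.generators}) all generators of $\ci_{\semi_t, h}$ are homogeneous. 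Combined with Corollary~\ref{cor.fiber.ideal}, this shows that $\fX_{\semi_t, h}$ is a graded closed subscheme of $\Spec\ \!\C[t,\mathbf{z}]$.

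With homogeneity in hand, Theorem~\ref{thm.flat} and Remark~\ref{rem.equalcohomologyclasses} yield $[Y_{\semi, h}] = [Y_{\nilp, h}]$ in $H^*(G/B)$, and more generally $[Y_{\mx, h}] = [Y_{\semi, h}]$ for every $\mx \in \fg_{(2, 1^{n-2})}$ by Lemma~\ref{lem:equality}. Thus the computation reduces to evaluating $[Y_{\semi, h}]$, and this has already been established in \cite[Cor.~4.6]{GP-semisimple}, where the explicit Schubert class decomposition in terms of the permutations $w[i, h(i)]$ (for corners $i$ realizing the maximum $d_h$) is given, together with the special case $h = (1, 2, \dots, n)$ in which each contributing Schubert class appears with multiplicity $2$.

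I do not anticipate a major obstacle: the heavy lifting has been done in Theorem~\ref{thm.flat} (flatness over the minimal sheet) and in \cite{GP-semisimple} (identification of components of $\fX_{\semi, h}$ and computation of the resulting cohomology class). The only substantive point to verify is the homogeneity of $\ci_{\semi_t, h}$ in the column grading, which is transparent from the explicit description in Lemma~\ref{lemma.generators}; once this is noted, the corollary follows by combining Theorem~\ref{thm.flat}, Remark~\ref{rem.equalcohomologyclasses}, and the semisimple formula from \cite[Cor.~4.6]{GP-semisimple}.
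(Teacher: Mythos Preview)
Your proposal is correct and follows essentially the same approach as the paper: the paper deduces the corollary by observing that flat degeneration preserves multidegree and hence cohomology class (Lemma~\ref{lem.flat}, Remark~\ref{rem.equalcohomologyclasses}), applying Theorem~\ref{thm.flat}, and then citing \cite[Cor.~4.6]{GP-semisimple} for the semisimple formula. Your explicit verification of homogeneity of $\ci_{\semi_t,h}$ in the column grading is a useful detail that the paper leaves implicit.
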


\begin{rem} The algebraic torus $T\simeq (\C^*)^n$ of $n\times n$ invertible matrices in $GL_n(\C)$ acts on both $Y_{\semi, h}$ and $Y_{\nilp, h}$ where $\semi=\diag(1,0,\dots, 0)$ 
and $\nilp=E_{1n}$.  However, we do not expect an variation of Corollary~\ref{cor.cohomclass} to hold in the setting of equivariant cohomology as the family $\fX_{\mx_t, h}$ is not $T$-equivariant. This is also obvious geometrically, as it is easy to see the Hessenberg varieties $Y_{\semi, h}$ and $Y_{\nilp, h}$ have different $T$-fixed points.  
\end{rem}

\subsection{Non-reduced Examples}\label{sec.examples}
Although $\fX_{\nilp, h}$ and $\fX_{\semi, h}$ lie in the same flat family, they may have different structure as schemes.  
One might hope that the results of Theorem~\ref{thm.assprimes} hold when passing to the fibers, that is, that the ideals $\cp_{i,h}^a$ are prime and taking those primes indexed by corners of $h$ yield the set of associated primes of $\ci_{\mx_a, h}$.  This intuition does indeed hold for $a\neq 0$, as discussed in the paragraph preceding Theorem~\ref{thm.assprimes} above. However, one does not obtain a comparable result for the special fiber $\fX_{\nilp, h}$ in all cases.

If some variation of Theorem~\ref{thm.assprimes} holds for all fibers, then all fibers of the family have components indexed by the corners of $h$ and therefore all fibers have the same number of irreducible components.  Our examples below show that this is not always the case.
In particular, Theorem~\ref{thm.radical.prod} was required in the proof of Theorem~\ref{thm.assprimes} to obtain the equality~\eqref{eqn.decomp}.  Note that homomorphisms do not respect intersections. That means that we cannot simply apply the evaluation homomorphism $\ev_a$ to obtain a variation of Theorem~\ref{thm.radical.prod} in the case $t=a$.  It might be the case that $\ev_a(\cjt_i\cdot\ck_j) = \cj_i^a \cdot \ck_j$ is not reduced, and when this occurs, Theorem~\ref{thm.assprimes} will not pass to the fiber over $a$.

\begin{example}\label{ex.1234}  A nilpotent matrix Hessenberg scheme may have fewer components than the associated semisimple matrix Hessenberg in the same sheet line of $\fg_{(2,1^{n-2})}$. Let $h=(1,2,3,4)$ and $\nilp=E_{14}$. We show here that $\fX_{\nilp, h}$ is a non-reduced nilpotent matrix Hessenberg scheme with no embedded primes, and with three components. Note that variety $X_{\nilp, h} = \pi(GL_n(\C)\cap \fX_{\nilp, h})$ is the Springer fiber over the minimal nilpotent orbit.

There are four corners for $h$, at $i=1, 2, 3, 4$ and $i^*=i$ for each corner. Three corners contribute to the defining ideal:
$$ 
\ci_{\nilp, h} = \ci_{\nilp, h, 1}+ \ci_{\nilp, h, 2} + \ci_{\nilp, h, 3}.
$$

Direct calculation shows that:
\begin{align*}
 \ci_{\nilp, h, 1}&=\langle z_{41}\rangle \cdot \langle z_{21}, z_{31}, z_{41}\rangle =  \cj_1^0 \cdot \ck_{1}\\
 \ci_{\nilp, h, 2}&=\langle z_{41}, z_{42}\rangle  \cdot \langle p_{\{2,3\}}, p_{\{2,4\}}, p_{\{3,4\}}\rangle = \cj_2^0\cdot \ck_2\\
  \ci_{\nilp, h, 3}&=\langle z_{41}, z_{42}, z_{43}\rangle  \cdot \langle p_{\{2,3, 4\}}\rangle = \cj_3^0 \cdot \ck_3.
  \end{align*}
 It is immediate that $\ci_{\nilp,h}$ is not radical, as $z_{41}^2\in \ci_{\nilp, h}$ but $z_{41}\not \in \ci_{\nilp, h}$ since the ideal is generated in degree 2 and higher.  
 By Corollary~\ref{cor.fiber.ideal}, $\tilde\ci_{\nilp,h}=\ci_{\nilp,h}$, so $\tilde{\ci}_{\nilp, h}$ is not radical and thus $\fX_{\nilp,h}$ is not reduced.
 
By Macaulay2, the associated primes  of  $\ci_{\nilp, h}$ and corresponding components are
\begin{align*}
\cj_3^0&=\langle z_{41}, z_{42}, z_{43}  \rangle, &\quad &\cx_{w_0[2341]}=\cx_{[3214]}\\
\cj_2^0 + \ck_2&= \langle z_{42}, z_{41}, z_{21}z_{32}-z_{22}z_{31}  \rangle,&\quad &\cx_{w_0[2413]}=\cx_{[3142]} \\
\ck_1&= \langle z_{21}, z_{31}, z_{41}  \rangle &\quad &\cx_{w_0[4123]}=\cx_{[1432]}.
\end{align*}
Here we have identified each associated prime as a Schubert determinantal ideal; note that the second of these  ideals is not considered in Lemma~\ref{lem.Schubert},
but follows from direct computation.
There are no embedded components, as the permutations $[3214]$, $[3142]$ and $[1432]$ are pairwise mutually incomparable with respect to the Bruhat order.  Thus $\fX_{\nilp, h}$ has 3 irreducible components. In contrast, the semisimple matrix Hessenberg scheme $\fX_{\semi, h=(1,2,3,4)}$
 is a reduced union of four matrix Richardson varieties (see \cite[Ex.~4.3]{GP-semisimple}).

By Corollary~\ref{cor.cohomclass}, the cohomology class of the Springer fiber $Y_{\nilp, h}$ is
$$
[Y_{\nilp, h}] = 2 \mathfrak{S}_{[1423]}+ 2\mathfrak{S}_{[2143]}+ 2\mathfrak{S}_{[2314]}.
$$
In particular, this implies that the multiplicity of each of the associated primes of $\ci_{\nilp ,h}$ is equal to $2$. A similar result holds for all $n$ when $h=(1,2,\ldots, n)$.
\end{example}

\begin{example} \label{ex.2244}  A nilpotent matrix Hessenberg scheme may have embedded components. Let $h=(2,2,4,4)$ and $\nilp=E_{14}$. In this case $\cc(h)=\{1, 3\}$ with $1^*=2$ and $3^*=4$. Only one corner contributes to the ideal $\ci_{\nilp, h}$ and a direct computation shows that
\[
\ci_{\nilp, h} =  \langle z_{41}, z_{42}\rangle  \cdot \langle p_{\{2,3\}}, p_{\{2,4\}}, p_{\{3,4\}}\rangle = \cj_2^0 \cdot \ck_2.
\]
Using Macaulay2, we see that the set of associated primes of $\ci_{\nilp, h}$ and corresponding components are
\begin{align*}
\cj_2^0&=\langle z_{41}, z_{42}  \rangle, &\quad & \cx_{w_0[2314]}=\cx_{[3241]}\\
\cj_2^0 + \ck_2 &= \langle z_{41}, z_{42}, z_{21}z_{32}-z_{22}z_{31}  \rangle,&\quad &\cx_{w_0[2413]}=\cx_{[3142]} \\
\ck_2&= \langle p_{\{2,3\}}, p_{\{2,4\}},p_{\{3,4\}}  \rangle &\quad &\cx_{w_0[1423]}=\cx_{[4132]}.
\end{align*}
Here we have identified each associated prime as a Schubert determinantal ideal; note the second of these  ideals is not considered in Lemma~\ref{lem.Schubert}, but follow from direct computation as in Example~\ref{ex.1234}.
We also observe that the ideal $\cj_2^0 + \ck_2$ corresponds to an embedded component, as $[3142] \leq_{\Br} [3241]$ in Bruhat order (note that $[3142] \leq_{\Br} [4132]$ also).  On the other hand, the permutations $[3214]$ and $[4132]$ are mutually incomparable with respect to Bruhat order.  Thus $\fX_{\nilp, h}$ is a non-reduced scheme with 2 irreducible components. In contrast, the semisimple matrix Hessenberg scheme $\fX_{\semi, h}$ is reduced and a union of two matrix Richardson varieties (see \cite[Ex.~4.3]{GP-semisimple}).
\end{example}

In contrast to these examples, recall from Example~\ref{ex.2444.2} that the semisimple and nilpotent Hessenberg schemes over the minimal sheet for $h=(2,4,4,4)$ are reduced, and each is is a union of two irreducible components (although neither is equidimensional).


\section{Nilpotent Matrix Hessenberg schemes over the minimal sheet} \label{sec.nilpotent}

This section explores the structure of nilpotent matrix Hessenberg schemes over the minimal sheet.   When $h$ is indecomposable, we identify all irreducible components of $\fX_{\nilp, h}$ and prove that each irreducible component of the semisimple matrix Hessenberg scheme $\fX_{\semi, h}$ admits a flat degeneration to an irreducible component of the nilpotent matrix Hessenberg scheme $\fX_{\nilp, h}$. Our second main theorem proves that $\fX_{\nilp, h}$ is reduced if and only if $h$ is indecomposable.

Suppose the Hessenberg function $h: [n]\to [n]$ is indecomposable, that is, suppose $h(i)>i$ for all $1\leq i <n$. 
In this case, we obtain an explicit description of the ideal $\ci_{\nilp, h}$, analogous to that of Theorem~\ref{thm.assprimes}. 

\begin{lemma}\label{lem.ev0.rad} For all $i,j\in [n]$ such that $i<j$, we have that 
\[
\cj_i^0 \cdot \ck_j = \cj_i^0 \cap \ck_j.
\]
\end{lemma}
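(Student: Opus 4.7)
The plan is to adapt the Gröbner basis argument from the proof of Theorem~\ref{thm.radical.prod} to the specialization at $t=0$. I will exhibit an explicit Gröbner basis for $\cj_i^0\cdot \ck_j$ whose leading monomials are square-free, from which it follows that $\cj_i^0 \cdot \ck_j$ is radical. Since $\cj_i^0$ and $\ck_j$ are both prime by Lemma~\ref{lem.Schubert}, radicality of the product then gives the equality with the intersection in precisely the manner used at the end of the proof of Theorem~\ref{thm.radical.prod}.

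First, $\cj_i^0 = \ev_0(\cjt_i) = \langle z_{n1}, \ldots, z_{ni}\rangle$, and with respect to the monomial order~\eqref{eqn.order} the set $\{z_{nk} : 1\leq k \leq i\}$ is trivially a Gröbner basis of $\cj_i^0$, since any two distinct variables have relatively prime leading monomials. I would then take
\[
G := \{ z_{nk}\, p_B : 1 \leq k \leq i,\ B \subseteq \{2,\ldots,n\},\ |B|=j\}
\]
as a generating set of $\cj_i^0 \cdot \ck_j$. Following the argument of Corollary~\ref{cor.sq.free}, since $k \leq i < j$, the variable $z_{nk}$ cannot divide $\LT(p_B) = z_{b_1 1} \cdots z_{b_j j}$: this would force $b_k = n$, contradicting $b_k < b_{k+1} \leq n$. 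Hence $\LM(z_{nk})$ and $\LM(p_B)$ are relatively prime, and $\LT(z_{nk}p_B)$ is a square-free monomial.

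To verify that $G$ is a Gröbner basis, I would apply Lemma~\ref{lemma.Spoly} to each pair $z_{nk}p_B$ and $z_{n\ell}p_C$ in $G$: the relative primality just noted means the hypotheses of the lemma hold, and both $S(z_{nk}, z_{n\ell})\to 0$ (trivially, as these are monomials, either identical or in distinct variables) and $S(p_B, p_C)\to_{G^j} 0$ (since $G^j$ is a Gröbner basis of $\ck_j$ by Corollary~\ref{cor.Grobner}). Theorem~\ref{thm.Grobner} then gives that $G$ is a Gröbner basis of $\cj_i^0 \cdot \ck_j$, so the initial monomial ideal is generated by square-free monomials and $\cj_i^0 \cdot \ck_j$ is radical. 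Combining with the primeness of $\cj_i^0$ and $\ck_j$, we conclude
\[
\cj_i^0 \cdot \ck_j = \mathrm{rad}(\cj_i^0 \cdot \ck_j) = \mathrm{rad}(\cj_i^0 \cap \ck_j) = \mathrm{rad}(\cj_i^0) \cap \mathrm{rad}(\ck_j) = \cj_i^0 \cap \ck_j.
\]

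I do not anticipate a serious obstacle here: essentially all of the work has already been done in Section~\ref{sec.technical}. The only subtlety worth flagging is that the $h_{k\ell}$ generators, which were essential to the Gröbner basis argument of Theorem~\ref{thm.radical.prod}, become redundant after specialization — indeed $\ev_0(h_{k\ell}) = z_{1\ell} z_{nk} - z_{1k} z_{n\ell} \in \cj_i^0$ — which is why the Gröbner basis simplifies and the argument above is shorter than its parametric counterpart.
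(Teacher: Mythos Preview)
Your proof is correct and follows essentially the same approach as the paper's own proof: both exhibit the set $\{z_{nk}\,p_B\}$ as a Gr\"obner basis for $\cj_i^0\cdot\ck_j$ with square-free leading monomials (using the relative primality argument and Lemma~\ref{lemma.Spoly}), conclude radicality, and then deduce the equality with the intersection from the primeness of the two factors. Your closing remark about the $h_{k\ell}$ becoming redundant upon specialization is a correct and helpful explanation of why the argument here is cleaner than in Theorem~\ref{thm.radical.prod}.
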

\begin{proof} Let $\preceq$ be any diagonal term order on $\C[\mathbf{z}]$.  Then 
\[
\{p_B \mid B\subseteq \{2,\ldots, n\}, |B|=j\}
\]
is a Gr\"obner basis for $\ck_j$ and 
\[
\{ z_{n1}, z_{n2}, \ldots, z_{ni} \}
\]
is a Gr\"obner basis for $\cj_i^0$. We have that $z_{nk}$ divides the leading monomial $z_{b_1 1}z_{b_22} \cdots z_{b_j j}$ of $p_{\{b_1<b_2< \cdots< b_j\}}$ only if $b_j=n$ and $k=j$.  However, $k\leq i <j$ by assumption.  Therefore the leading monomials of any $p_B$ and any $z_{nk}$ are relatively prime. By Lemma~\ref{lemma.Spoly}  and Theorem~\ref{thm.Grobner}, the set
\[
\{ z_{nk}p_B \mid B\subseteq \{2,\ldots, n\}, |B|=j, 1\leq k \leq i \}
\]
is a Gr\"obner basis for $\cj_i^0 \cdot \ck_j$. The leading term of every polynomial in this set is square free, so $\cj_i^0 \cdot \ck_j$ is radical and the desired equality follows immediately, as in the last sentence of the proof of Theorem~\ref{thm.radical.prod}.
\end{proof}

The following proposition is a scheme theoretic version of a result due to Tymoczko \cite[\S2.1]{Tymoczko2006A} describing the irreducible components of the nilpotent Hessenberg varieties $Y_{\nilp, h}$ in the flag variety $G/B$.

\begin{prop}\label{prop.nilp.components} Suppose $h: [n]\to [n]$ is an indecomposable Hessenberg function and $\nilp=E_{1n}$. Then 
\[
\ci_{\nilp, h} = \bigcap_{i\in \cc(h)} \left( \cj_{i-1}^0 + \ck_{h(i)} \right).
\] 
In particular, $\left\{ \cj_{i-1}^0 + \ck_{h(i)}  \mid i\in \cc(h) \right\}$ is the set of associated primes of ${\ci}_{\nilp, h}$.  Furthermore, the nilpotent matrix Hessenberg scheme $\fX_{\nilp, h}$ is reduced and a union of matrix Schubert varieties,
\[
\fX_{\nilp, h} = \bigcup_{i\in \cc(h)} \cx_{w_0w[i, h(i)]}
\]
where $w[i,h(i)] = u[i]v[h(i)]$ is the shortest permutation $w$ in $S_n$ such that $w(i)=1$ and $w(h(i))=n$.
\end{prop}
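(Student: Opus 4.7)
The plan is to follow the proof of Theorem~\ref{thm.assprimes} almost verbatim, with Lemma~\ref{lem.ev0.rad} substituting for Theorem~\ref{thm.radical.prod} at the one step where radicality of a product of ideals is needed. The indecomposability hypothesis is exactly what licenses the substitution.

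First I would apply the evaluation homomorphism $\ev_0 : \C[t,\mathbf{z}] \to \C[\mathbf{z}]$ to the identity $\ci_{\semi_t, h} = \sum_{i \in \cc(h)} \cjt_{i^*} \cdot \ck_{h(i)}$ of Lemma~\ref{lemma.generators}. Using $\semi_0 = E_{1n} = \nilp$ and $\ev_0(\cjt_i) = \cj_i^0$, this yields $\ci_{\nilp, h} = \sum_{i\in \cc(h)} \cj_{i^*}^0 \cdot \ck_{h(i)}$. For each corner $i$ with $h(i) < n$ the inequality $i^* < n$ is forced (since $h(n) = n$ always), so indecomposability gives $h(i^*) > i^*$; combined with $h(i^*) = h(i)$, this means $i^* < h(i)$, and Lemma~\ref{lem.ev0.rad} applies to give $\cj_{i^*}^0 \cdot \ck_{h(i)} = \cj_{i^*}^0 \cap \ck_{h(i)}$. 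The remaining case $h(i) = n$ is trivial because $\ck_n = \langle 0 \rangle$. Hence
$$
\ci_{\nilp, h} = \sum_{i \in \cc(h)} \bigl( \cj_{i^*}^0 \cap \ck_{h(i)} \bigr).
$$

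From here I would run the induction on $|\cc(h)|$ used in the proof of Theorem~\ref{thm.assprimes} with $\cjt_i$ replaced everywhere by $\cj_i^0$. The combinatorial manipulations there (the containment $\ck_{h(m)} \subseteq \cl$, repeated use of Lemma~\ref{lem.ideal.ops}, and the stepwise removal of the largest corner of $h$) are purely formal and do not use the parameter $t$, so they transfer without change to produce
$$
\ci_{\nilp, h} = \bigcap_{i \in \cc(h)} \bigl( \cj_{i-1}^0 + \ck_{h(i)} \bigr).
$$
Lemma~\ref{lem.Schubert} then identifies each summand: for a corner $i$ the inequality $i < h(i)$ (again indecomposability, or the edge case $h(i) = n$ in which the sum collapses to $\cj_{i-1}^0$) shows that $\cj_{i-1}^0 + \ck_{h(i)}$ is the defining ideal of the matrix Schubert variety $\cx_{w_0 w[i, h(i)]}$ and is therefore prime. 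Lemma~\ref{lemma.inclusions} shows these primes are pairwise incomparable over distinct corners, so the decomposition is irredundant and supplies the full set of associated primes. Finally Corollary~\ref{cor.fiber.ideal} gives $\tilde{\ci}_{\nilp, h} = \ci_{\nilp, h}$, so $\fX_{\nilp, h}$ is reduced and equal to $\bigcup_{i \in \cc(h)} \cx_{w_0 w[i, h(i)]}$.

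The only conceptual point, and the single place the hypothesis is used, is the strict inequality $i^* < h(i)$ at every corner with $h(i) < n$; this one input simultaneously unlocks Lemma~\ref{lem.ev0.rad} (converting products into intersections) and Lemma~\ref{lem.Schubert} (identifying the summands as Schubert ideals). Without indecomposability it can fail and $\ci_{\nilp, h}$ need no longer be radical, as witnessed by Examples~\ref{ex.1234} and~\ref{ex.2244}. I do not expect any further obstacle beyond routine bookkeeping of degenerate corners such as $h(i) = n$ or $i = 1$ (where $\cj_0^0 = \langle 0 \rangle$).
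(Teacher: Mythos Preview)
Your proposal is correct and follows essentially the same approach as the paper's proof: apply $\ev_0$ to Lemma~\ref{lemma.generators}, use indecomposability to invoke Lemma~\ref{lem.ev0.rad} (converting products to intersections), rerun the inductive argument of Theorem~\ref{thm.assprimes} with $\cj_i^0$ in place of $\cjt_i$, and then appeal to Lemma~\ref{lem.Schubert}, Lemma~\ref{lemma.inclusions}, and Corollary~\ref{cor.fiber.ideal} to finish. The paper's proof differs only in presentation order (it cites Corollary~\ref{cor.fiber.ideal} at the outset rather than at the end) and in leaving the edge cases $h(i)=n$ and $i=1$ implicit.
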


\begin{proof}  By Corollary~\ref{cor.fiber.ideal}, the matrix Hessenberg scheme $\fX_{\nilp, h}$ is defined by the ideal $\ci_{\nilp,h}$. Note that $\nilp = \semi_0$ on the sheet line $\semi_t$. Since the evaluation homomorphism $\ev_0$ preserves sums and products of ideals, Lemma~\ref{lemma.generators} implies
\[
\ci_{\nilp, h}  = \sum_{i\in \cc(h)} \left( \cj_{i^*}^0 \cdot \ck_{h(i)} \right).
\]
Since $h$ is indecomposable, we have $i^*<h(i^*)=h(i)$ so by Lemma~\ref{lem.ev0.rad},
\[
\ci_{\nilp, h}  = \sum_{i\in \cc(h)}  \left(  \cj_{i^*}^0 \cap \ck_{h(i)} \right).
\] 
The results of Lemma~\ref{lemma.inclusions} hold after replacing $t$ with $a$ since inclusions and sums are preserved under the evaluation homomorphism.  
The inductive portion of the proof of Theorem~\ref{thm.assprimes} gives a proof of the following equality, by replacing $t$ with $0$.
\[
\sum_{i\in \cc(h)} \left( \cj_{i^*}^0 \cap \ck_{h(i)} \right) =  \bigcap_{i\in \cc(h)}  \cp_{i,h}^0,
\]
where $\cp_{i,h}^0:=\cj_{i-1}^0 + \ck_{h(i)}$.  
It follows that
\[
\ci_{\nilp, h}  = \bigcap_{i\in \cc(h)} \cp_{i,h}^0 = \bigcap_{i\in \cc(h)} \left( \cj_{i-1}^0 + \ck_{h(i)} \right).
\]

By Lemma~\ref{lem.Schubert}, $\cj_{i-1}^0 + \ck_{h(i)}$ is prime, so $\ci_{\nilp, h}$ is an intersection of prime ideals and is therefore radical.  Furthermore, as in the proof of Theorem~\ref{thm.assprimes}, the set of prime ideals $\left\{ \cj_{i-1}^0 + \ck_{h(i)}  \mid i\in \cc(h) \right\}$ are distinct and none is contained in any of the others.  Thus, these are precisely the associated primes of $\ci_{\nilp, h}$ as claimed.  The geometric description of the scheme $\fX_{\nilp, h}$ as a union of matrix Schubert varieties now follows by Lemma~\ref{lem.Schubert}.
\end{proof}

\begin{thm}\label{thm.components.indecomp} Suppose $h$ is indecomposable, and let $\semi = \mathrm{diag}(1,0,\ldots, 1)$ and $\nilp = E_{1n}$. Each irreducible component of $\fX_{\semi, h}$ admits a flat degeneration to a unique irreducible component of $\fX_{\nilp, h}$, resulting in a bijective correspondence between the irreducible components of $\fX_{\semi, h}$ and that of $\fX_{\nilp,h}$.
\end{thm}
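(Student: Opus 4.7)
The plan is to construct, for each corner $i\in\cc(h)$, an intermediate flat family $\fX_t^{(i)}\to \Spec\ \!\C[t]$ whose generic fiber recovers a component of $\fX_{\semi,h}$ and whose fiber over $0$ recovers a component of $\fX_{\nilp,h}$; the bijection will then come for free from the fact that both sets of components are indexed by $\cc(h)$.

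Concretely, for each $i\in \cc(h)$, I would let $\fX_t^{(i)}:=\Spec\ \!\C[t,\mathbf{z}]/\cpt_{i,h}$ where $\cpt_{i,h}=\cjt_{i-1}+\ck_{h(i)}$ is the prime from \eqref{eqn.prime.def}. By Theorem~\ref{thm.assprimes}, these are exactly the minimal primes of the family $\fX_{\semi_t,h}$, so $\fX_t^{(i)}$ is (up to reducedness) an irreducible component of the total family. To check flatness of the induced map $\fX_t^{(i)}\to \Spec\ \!\C[t]$, first observe that every generator of $\cpt_{i,h}$ is a polynomial of positive degree in the $\mathbf{z}$-variables (the $tz_{1k}+z_{nk}$ are linear in $\mathbf{z}$, and the minors $p_B$ are homogeneous of degree $h(i)$ in $\mathbf{z}$). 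Assigning the weight $\deg z_{k\ell}=1$ and $\deg t=0$ makes $\cpt_{i,h}$ homogeneous of positive $\mathbf{z}$-degree, hence $\cpt_{i,h}\cap \C[t]=\{0\}$. Combined with primeness of $\cpt_{i,h}$ (Theorem~\ref{thm.prime}), this shows $\C[t]$ embeds in the integral domain $\C[t,\mathbf{z}]/\cpt_{i,h}$; in particular the quotient is torsion-free as a $\C[t]$-module, and flatness follows from Lemma~\ref{lemma.torsion}.

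Next I identify the two distinguished fibers. For $a\neq 0$, Lemma~\ref{lem.ev2}(2) tells us that the isomorphism $\psi_a$ sends $\cp_{i,h}^a=\ev_a(\cpt_{i,h})$ to the Richardson ideal $w_0\!\cdot\!\cj_{i-1}^0+\ck_{h(i)}$, which defines the matrix Richardson variety $\cx_{u[i]}^{\op}\cap \cx_{w_0v[h(i)]}$. Applying Theorem~\ref{thm.assprimes} to $\semi=\semi_1$ (or citing the decomposition in \cite{GP-semisimple}), this is precisely the $i$-th irreducible component of $\fX_{\semi,h}$. For $a=0$, by Proposition~\ref{prop.nilp.components} together with Lemma~\ref{lem.Schubert}, the fiber is cut out by $\cp_{i,h}^0=\cj_{i-1}^0+\ck_{h(i)}$ and equals the matrix Schubert variety $\cx_{w_0w[i,h(i)]}$, which is the $i$-th component of $\fX_{\nilp,h}$. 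Thus $\fX_t^{(i)}$ is the desired one-parameter flat degeneration from a specific semisimple component to a specific nilpotent component.

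Finally, to upgrade this to a bijection, I would note that the assignment $i\mapsto \cx_{w_0w[i,h(i)]}$ lists each component of $\fX_{\nilp,h}$ exactly once (Proposition~\ref{prop.nilp.components}), and the assignment $i\mapsto \cx_{u[i]}^{\op}\cap \cx_{w_0v[h(i)]}$ (pulled back through $\psi_1$) lists each component of $\fX_{\semi,h}$ exactly once; distinctness of the semisimple components across corners follows from Lemma~\ref{lemma.inclusions}, which rules out containment among the $\cpt_{i,h}$. The main technical point is keeping track of the correct identification of components across the family, but once Theorem~\ref{thm.assprimes} and Proposition~\ref{prop.nilp.components} provide the parametrizations on both ends by the single set $\cc(h)$, the bijection is transparent; there is no serious obstacle beyond assembling these inputs.
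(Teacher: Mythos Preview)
Your proposal is correct and follows essentially the same approach as the paper: for each corner $i\in\cc(h)$ you take the family $\Spec\ \!\C[t,\mathbf{z}]/\cpt_{i,h}\to\Spec\ \!\C[t]$, prove flatness via Lemma~\ref{lemma.torsion} using that $\cpt_{i,h}$ is prime and contains no nonzero polynomial in $t$ alone, and then identify the general and special fibers via Lemma~\ref{lem.ev2}(2) and Proposition~\ref{prop.nilp.components} respectively. Your grading argument for $\cpt_{i,h}\cap\C[t]=\{0\}$ is a clean way to see what the paper states more tersely as ``$t-a\notin\cpt_{i,h}$ for all $a\in\C$''; the two justifications are equivalent, and the remaining steps match the paper's proof line for line.
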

\begin{proof} For each $i\in \cc(h)$, consider the family of schemes defined by the morphism
\[
\varphi_{i,h}: \Spec\ \!\C[t, \mathbf{z}]/ \cpt_{i,h} \to \Spec\ \!\C[t]
\]
corresponding to the natural inclusion $\C[t] \hookrightarrow \C[t, \mathbf{z}]/ \cpt_{i,h}$, where $\cpt_{i,h}:= \cjt_{i-1} + \ck_{h(i)}$. By Theorem~\ref{thm.prime} 
the ideal $\cpt_{i,h}$
is prime.  Since $t-a\notin \cpt_{i,h}$ for all $a\in \C$, it follows from Lemma~\ref{lemma.torsion} that $\varphi_{i,h}$ is flat. The scheme theoretic fiber of $\varphi_{i,h}$ over $\left< t-a \right>$ is precisely $\Spec\ \!\C[\mathbf{z}] / \cp_{i,h}^a$.  By Lemma~\ref{lem.ev2}(2), the general fiber is the matrix Richardson variety $\cx_{u[i+1]}^{\op} \cap \cx_{w_0v[h(i)]}$ which is the irreducible component of $\fX_{\semi, h}$ indexed by corner $i$~\cite[Theorem 1]{GP-semisimple}.  By Proposition~\ref{prop.nilp.components} the special fiber is precisely the irreducible component $\cx_{w_0w[i,h(i)]}$ of $\fX_{\nilp,h}$ indexed by corner $i$. 
\end{proof}

The results of Theorem~\ref{thm.components.indecomp} need not hold when $h$ decomposable. Indeed, in Example~\ref{ex.1234} we observed that the nilpotent matrix Hessenberg scheme $\fX_{\nilp ,(1,2,3,4)}$ has three irreducible components, while $\fX_{\semi, (1,2,3,4)}$ has four irreducible components.

\begin{thm}\label{thm:decomposable} Let $h:[n] \to [n]$ be a  Hessenberg function. The matrix Hessenberg scheme $\fX_{\nilp, h}$ is reduced if and only if $h$ is indecomposable.
\end{thm}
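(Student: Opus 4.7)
The plan is to prove the converse of Proposition~\ref{prop.nilp.components}, which already delivers the indecomposable $\Rightarrow$ reduced direction. Assuming $h$ is decomposable, I will show that $\ci_{\nilp,h}$ fails to be radical; since $\tilde{\ci}_{\nilp,h}=\ci_{\nilp,h}$ by Corollary~\ref{cor.fiber.ideal}, this translates directly to $\fX_{\nilp,h}$ not being reduced.

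First, let $j$ be the smallest index with $h(j)=j$. Because $h(n)=n$ always and some $k<n$ has $h(k)=k$, we get $j\leq n-1$. A short combinatorial check using $h$ non-decreasing and $h(k)\geq k$ shows $\max\{k:h(k)=j\}=j$, so the plateau of $h$ at level $j$ ends exactly at position $j$. Evaluating the formula of Lemma~\ref{lemma.generators} at $t=0$ gives
\[
\ci_{\nilp,h}=\sum_{i\in\cc(h)}\cj_{i^*}^{0}\cdot\ck_{h(i)},
\]
and the corner indexing the plateau at level $j$ contributes the summand $\cj_j^{0}\cdot\ck_j\subseteq\ci_{\nilp,h}$. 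My candidate witness to non-radicalness is $p:=p_B$ with $B=\{n-j+1,\ldots,n\}$, a $j\times j$ minor in $\ck_j$ whose last row comes from row $n$ of $Z$; this choice is valid because $j\leq n-1$ forces $B\subseteq\{2,\ldots,n\}$.

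The membership $p^{2}\in\ci_{\nilp,h}$ is routine: cofactor expansion along row $n$ gives $p=\sum_{\ell=1}^{j}(-1)^{j+\ell}z_{n\ell}N_{\ell}$, with each $z_{n\ell}\in\cj_j^{0}$ and each $N_\ell$ a $(j-1)\times(j-1)$ minor of the rows above row $n$, so $p^{2}=\sum_{\ell}(\pm)(z_{n\ell}\,p)N_{\ell}$ lies in $\cj_j^{0}\cdot\ck_j\subseteq\ci_{\nilp,h}$. The main obstacle is the opposite assertion, $p\notin\ci_{\nilp,h}$. My plan for this is a homogeneous degree argument: every generator $z_{nk}p_{B'}$ of $\ci_{\nilp,h}$ is homogeneous of degree $h(i)+1$, so the ideal is graded, and any degree-$j$ element can only draw contributions from corners with $h(i)\leq j-1$, forcing $i^*\leq h(i)\leq j-1$ and hence $z_{nk}\in\cj_{j-1}^{0}$ whenever $k\leq i^*$. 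Thus the degree-$j$ component of $\ci_{\nilp,h}$ lies inside $\cj_{j-1}^{0}=\langle z_{n1},\ldots,z_{n,j-1}\rangle$. Reducing the cofactor expansion of $p$ modulo $\cj_{j-1}^{0}$ kills every term except $z_{nj}N_{j}$, and $N_{j}$ is a minor in variables $z_{k\ell}$ with $k<n$, so this residue is nonzero in the polynomial quotient $\C[\mathbf{z}]/\cj_{j-1}^{0}$. Consequently $p\notin\cj_{j-1}^{0}$, so $p$ is not in the degree-$j$ component of $\ci_{\nilp,h}$; since $p$ is homogeneous of degree $j$, this forces $p\notin\ci_{\nilp,h}$, witnessing that $\ci_{\nilp,h}$ is not radical and completing the proof.
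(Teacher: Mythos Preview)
Your proof is correct and follows essentially the same strategy as the paper's: pick the minimal $j$ with $h(j)=j$, exhibit a minor $p_B$ with $n\in B$ and $|B|=j$ whose square lies in $\cj_j^0\cdot\ck_j\subseteq\ci_{\nilp,h}$, and use a degree argument plus Laplace expansion along row $n$ to show $p_B\notin\ci_{\nilp,h}$. Your version is slightly more streamlined in that you bound the degree-$j$ component of $\ci_{\nilp,h}$ directly by $\cj_{j-1}^0$ via the inequality $i^*\le h(i)\le j-1$, whereas the paper routes through Lemma~\ref{lem.ev0.rad} to reach the (smaller) ideal $\cj_{m^*}^0$; both lead to the same contradiction.
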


\begin{proof}
By Proposition~\ref{prop.nilp.components}, we need only prove that $h$ decomposable implies $\fX_{\nilp, h}$ is not reduced. Suppose then that $h$ is decomposable.
 By Corollary~\ref{cor.fiber.ideal}, it suffices to exhibit a nonzero nilpotent element of $\C[\mathbf{z}]/\ci_{\nilp, h}$.  By Lemma~\ref{lemma.generators},
\begin{equation}\label{ideal.sum}
\ci_{\nilp, h} = \sum_{k\in \cc(h)} \ci_{\nilp, h,k^*} =  \sum_{k\in \cc(h)} \cj^0_{k^*}\cdot \ck_{h(k)}.
\end{equation}
For $h$ decomposable, the set $\cd(h):=\{j<n \mid h(j)=j\}$ is nonempty.  For each $j\in [n]$, let $j_*$ denote the minimal $k$ such that $h(k)=h(j)$. Recall that, for each $j\in [n]$, $j^*$ denote the maximal element $k$ such that $h(k)=h(j)$.  For each $j\in \cd(h)$, $j_*\in \cc(h)$ and $(j_*)^*=j$. In particular, for any $j\in \cd$, the index $j_*\in\cc(h)$ appears in the sum on the RHS of~\eqref{ideal.sum}, with corresponding summand $\ci_{\nilp, h, j}$. Furthermore, $h(j_*)=h(j)=j$ for all  $j\in \cd(h)$, so the sum~\eqref{ideal.sum} includes all terms  
\[
\ci_{\nilp,h,j}=\cj_j^0 \cdot \ck_j \;\; \mbox{for all } \;\; j\in \cd(h).
\]
Choose $i_0\in \cd$ minimal.
Let $B_0\subset \{2,\dots, n\}$ be a subset of size $i_0$ with $n\in B_0$. To complete the proof, we show that $p_{B_0}\not \in \ci_{\nilp, h}$ but $p_{B_0}^2\in \ci_{\nilp, h}$.

Observe $p_{B_0}\in \ck_{i_0}$ (as it's a generator) and $p_{B_0}\in\cj_{i_0}$ (by the choice of $B_0$, since $n\in B_0$), implying $p_{B_0}^2\in\cj_{i_0}\cdot \ck_{i_0}\subset \ci_{\nilp, h}$.

Suppose by way of contradiction that $p_{B_0}\in  \ci_{\nilp, h}$. All polynomials in $\ci_{\nilp,h,k^*}=\cj_{k^*}^0 \cdot \ck_{h(k)}$ have degree greater than or equal to $h(k)+1$.  If $k\geq (i_0)_*$, 
$$h(k)+1\geq h((i_0)_*)+1 = h(i_0)+1 = i_0+1,$$
and thus the elements of the ideals $\ci_{\nilp, h, k}$ from~\eqref{ideal.sum} with $k\geq (i_0)_*$ have degree larger than $i_0=\deg p_{B_0}$. Our assumption that $p_{B_0}\in \ci_{\nilp, h}$ now implies 
$$
p_{B_0}\in \sum_{\substack{k\in \cc(h)\\ k < (i_0)_*}} \ci_{\nilp,h,k^*},
$$ 
where the sum is nonempty since $p_{B_0}$ is nonzero. 

Since $i_0$ is minimal among elements of $\cd(h)$, for each $k<(i_0)^*$ we must have $k<h(k)$. This implies $ \ci_{n,h,k^*}= \cj_{k^*}^0\cdot \ck_{h(k)} =  \cj_{k^*}^0\cap \ck_{h(k)}$ by Lemma~\ref{lem.ev0.rad}. Thus
$$
p_{B_0}\in \sum_{\substack{k\in \cc(h)\\ k < (i_0)_*}} \cj_{k^*}^0\cap \ck_{h(k)}\subseteq \sum_{\substack{k\in \cc(h)\\ k < (i_0)_*}} \cj_{k^*}^0.
$$

Let $m=\max\{k\in \cc(h)\mid k < (i_0)_*\}$. Then $p_{B_0}\subset \cj_{m^*}^0$ by Lemma~\ref{lemma.inclusions} (setting $t=0$). 
Since $n\in {B_0}$, we may write $p_{B_0}$ using Laplace expansion across the bottom row as a sum
\begin{equation}\label{eq:pBsum}
\pm p_{B_0} = \sum_{s=1}^{m^*}(-1)^s z_{ns} q^{(s)} + \sum_{s={m^*+1}}^{i_0} (-1)^s z_{ns} q^{(s)} 
\end{equation}
for polynomials $q^{(s)}$, where the second sum is nontrivial as $i_0>m^*$. Specifically,
$q^{(s)}$ is the determinant of $({\bf z}_{\alpha\beta})$ where the indices run over $\alpha\in B_0\setminus \{n\}$ and $\beta\in \{1,\dots, i_0\},\; \beta\neq s$. 

Let 
$$
g_{\leq m^*} = \sum_{s=1}^{m^*}(-1)^s z_{ns} q^{(s)} \quad \mbox{ and}\quad
 g_{>m^*}=\sum_{s={m^*+1}}^{i_0} (-1)^s z_{ns} q^{(s)}.
$$
Then  $g_{\leq m^*} \in \cj_{m^*}^0=\langle z_{n1}, z_{n2}, \ldots , z_{nm^*}\rangle$ since each term in the sum is a multiple of $z_{ns}$ with $s \leq m^*$. Since we also have $p_{B_0}\in  \cj_{m^*}^0$,
\begin{equation}\label{eq:falseelement}
g_{>m^*}=\pm p_{B_0}- g_{\leq m^*}
\in  \cj_{m^*}^0 = \langle z_{n1}, z_{n2}, \ldots , z_{nm^*}\rangle.
\end{equation}
But $g_{>m^*}$ is a  polynomial in variables $z_{ij}$ for which $i=n$ implies $j >  m^*$, contradicting \eqref{eq:falseelement}.  We conclude that $p_{B_0}\not\in \ci_{\nilp, h}$. 
 
 Since $p_{B_0}$ is a nonzero nilpotent element of $\C[\mathbf{z}]/\ci_{\nilp, h}$, the scheme $\fX_{\nilp, h}$ is not reduced.\end{proof}


\appendix

\section{Miracle flatness revisited}\label{sec.miracleflatnessproofs}
This appendix explores applications of miracle flatness (Theorem~\ref{thm.miracle}) to the family $\fX_{\mx_t, h}\rightarrow \Spec\ \!\C[t]$. It may be used to show flatness in the following cases:
\begin{enumerate}  
\item for all indecomposable Hessenberg functions and matrices $\mx$ in the regular sheet $\fg_{(n)}$, and 
\item for all matrices $\mx\in \mathfrak{gl}_n(\C)$, 
when $h = (n-1, n,\ldots, n)$.
\end{enumerate}
Cases (1) and (2) have either appeared in the literature or are easily obtained from results in the literature. We reprove these results here in order to streamline the arguments and showcase how this method differs from that of our proof of Theorem~\ref{thm.flat}. 

To begin, we record a helpful lemma which follows from work of Insko, Tymoczko, and Woo; see~\cite[Section 4]{Insko-Tymoczko-Woo}. Note that the results in~\cite{Insko-Tymoczko-Woo} consider matrices with coefficients in a field, but each generalizes to the setting of matrices with coefficients in $\C[t]$ without any changes to the proofs.

\begin{lemma} \label{lemma.LCIgens} Let $R=\C$ or $R=\C[t]$. For any Hessenberg function $h: [n] \to [n]$ and matrix $\mx\in M_n(R)$, the ideal $\iota(\ci_{\mx_t, h})R[\mathbf{z}, d^{-1}]$ in $R[\mathbf{z}, d^{-1}]$ is generated by $n^2-\sum_{i=1}^n h(i)$ polynomials in $R[\mathbf{z}, d^{-1}]$.
\end{lemma}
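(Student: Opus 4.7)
The plan is to produce, on the localization $R[\mathbf{z},d^{-1}]$, a generating set for $\iota(\ci_{\mx_t,h})R[\mathbf{z},d^{-1}]$ that is indexed by the $n^2-\sum_{i=1}^n h(i)$ pairs $(k,j)$ with $k>h(j)$. First, because $d$ becomes a unit in $R[\mathbf{z},d^{-1}]$, the matrix $Z=(z_{ij})$ is invertible in $\mathrm{GL}_n(R[\mathbf{z},d^{-1}])$, and we form
\[
M := Z^{-1}\mx_t Z \in M_n\bigl(R[\mathbf{z},d^{-1}]\bigr).
\]
Since the columns $v_1,\dots,v_n$ of $Z$ form a basis of the free module $R[\mathbf{z},d^{-1}]^n$ and $\mx_t Z = Z M$, the entry $M_{kj}$ is exactly the coefficient of $v_k$ in the expansion of $\mx_t v_j$ in the basis $\{v_1,\dots,v_n\}$. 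Define
\[
\mathcal{I}' := \bigl\langle\, M_{kj}\ :\ 1\leq j\leq n,\ h(j) < k \leq n\,\bigr\rangle \subseteq R[\mathbf{z},d^{-1}],
\]
which has $\sum_{j=1}^n\bigl(n-h(j)\bigr)=n^2-\sum_{j=1}^n h(j)$ generators by construction.

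The crux of the argument is to prove $\iota(\ci_{\mx_t,h})R[\mathbf{z},d^{-1}]=\mathcal{I}'$. Left-multiplying the matrix in~\eqref{eqn.rank.cond} (with $i$ replaced by $i^*$) by the unit $Z^{-1}$ replaces $\mx_t v_j$ by $Me_j$ and $v_\ell$ by $e_\ell$, and preserves the rank condition; hence $\ci_{\mx_t,h,i^*}\cdot R[\mathbf{z},d^{-1}]$ is generated by the $(h(i)+1)\times(h(i)+1)$ minors of
\[
\bigl[\,Me_1\ \bigl|\ \cdots\ \bigr|\ Me_{i^*}\ \bigl|\ e_1\ \bigl|\ \cdots\ \bigr|\ e_{h(i)}\,\bigr].
\]
Using the identity columns $e_1,\dots,e_{h(i)}$ to clear the top $h(i)$ entries of each $Me_j$ produces a matrix whose first $i^*$ columns have only the entries $M_{kj}$ with $k>h(i)$ in rows $h(i)+1,\dots,n$ and whose last $h(i)$ columns are still $e_1,\dots,e_{h(i)}$. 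The nonvanishing $(h(i)+1)\times(h(i)+1)$ minors of this reduced matrix are precisely $\pm M_{kj}$ with $k>h(i)$ and $j\leq i^*$, so
\[
\ci_{\mx_t,h,i^*}\,R[\mathbf{z},d^{-1}] \;=\; \bigl\langle M_{kj}\ :\ j\leq i^*,\ k>h(i)\bigr\rangle.
\]
Summing over $i\in \cc(h)$, applying Lemma~\ref{lem.corners}, and using the identity $h(i)=h(i^*)$ reduces the claim to the elementary combinatorial check that every pair $(k,j)$ with $k>h(j)$ arises from exactly the corner $i\in\cc(h)$ for which $j\in\{i,\dots,i^*\}$; this yields $\iota(\ci_{\mx_t,h})R[\mathbf{z},d^{-1}]=\mathcal{I}'$ and the stated count.

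The main obstacle is carrying out the column-reduction argument carefully in the polynomial ring $R[\mathbf{z},d^{-1}]$: one must track the powers of $d^{-1}$ introduced by inverting $Z$ and verify that the elementary column operations used preserve the ideal generated by the maximal minors (not merely the vanishing locus). Once that bookkeeping is in hand — which is exactly what is done in Section~4 of Insko--Tymoczko--Woo — the conclusion that $\mathcal{I}'$ is a generating set of size $n^2-\sum_{i=1}^n h(i)$ is immediate from its definition.
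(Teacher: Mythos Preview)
Your argument is correct and is precisely the Insko--Tymoczko--Woo computation the paper cites in lieu of a proof. One small imprecision worth fixing: after column-reducing, the $(h(i)+1)\times(h(i)+1)$ minors of the reduced matrix are not \emph{only} the single entries $\pm M_{kj}$ but also include higher minors of the block $(M_{kj})_{k>h(i),\,j\le i^*}$ (obtained by selecting more than one of the $\tilde{Me_j}$ columns); since those higher minors lie in the ideal generated by the entries, your equality $\ci_{\mx_t,h,i^*}R[\mathbf{z},d^{-1}]=\langle M_{kj}:j\le i^*,\,k>h(i)\rangle$ is unaffected.
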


We obtain two immediate corollaries.

\begin{cor} \label{cor.dim} For all $\mx\in \mathfrak{gl}_n(\C)$ and Hessenberg functions $h: [n]\to [n]$, we have  $\dim \,\fX_{\mx_t, h} \geq  1+\sum_{i=1}^n h(i)$.
\end{cor}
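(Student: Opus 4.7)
The plan is to apply Krull's height theorem in the localized ring $\C[t,\mathbf{z},d^{-1}]$, where the generating set from Lemma~\ref{lemma.LCIgens} cuts down the dimension by at most $n^2-\sum_i h(i)$.

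First, I would use the definition of saturation in~\eqref{eq.definitionItilde} to identify
$$\bigl(\C[t,\mathbf{z}]/\tilde{\ci}_{\mx_t,h}\bigr)_d \;\cong\; \C[t,\mathbf{z},d^{-1}]\big/\iota(\ci_{\mx_t,h})\C[t,\mathbf{z},d^{-1}].$$
Since Krull dimension cannot increase under localization, this yields
$$\dim \fX_{\mx_t,h} \;\geq\; \dim \C[t,\mathbf{z},d^{-1}]\big/\iota(\ci_{\mx_t,h})\C[t,\mathbf{z},d^{-1}].$$

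Next, by Lemma~\ref{lemma.LCIgens} (applied with $R=\C[t]$), the ideal $\iota(\ci_{\mx_t,h})\C[t,\mathbf{z},d^{-1}]$ is generated by $n^2-\sum_{i=1}^n h(i)$ elements. The ring $\C[t,\mathbf{z},d^{-1}]$ is a localization of the $(n^2+1)$-dimensional polynomial ring $\C[t,\mathbf{z}]$ at the nonzerodivisor $d$, so it is a catenary Noetherian domain of the same Krull dimension $n^2+1$. Provided the ideal is proper, Krull's height theorem gives
$$\dim \C[t,\mathbf{z},d^{-1}]\big/\iota(\ci_{\mx_t,h})\C[t,\mathbf{z},d^{-1}] \;\geq\; (n^2+1) - \Bigl(n^2 - \sum_{i=1}^n h(i)\Bigr) \;=\; 1 + \sum_{i=1}^n h(i).$$

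Finally, to check the ideal is proper, I would invoke the classical fact that the Hessenberg variety $Y_{\mx,h}\subseteq G/B$ is nonempty for every $\mx$ and $h$: any flag $V_\bullet$ with $\mx V_i\subseteq V_i$ for all $i$ automatically satisfies $\mx V_i\subseteq V_{h(i)}$, and such a flag always exists (e.g., any flag in which $\mx$ is upper triangular). By Remark~\ref{rem.quotient}, $\pi^{-1}(Y_{\mx,h})\subseteq \fX_{\mx,h}\cap G$ is then nonempty, so for any $a\neq 0$ the fiber $\fX_{\mx_a,h}$ meets $D(d)$; consequently $\fX_{\mx_t,h}$ itself meets this open set and the ideal is proper. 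The only mild subtlety is justifying that saturation with respect to $d$ is compatible with localization at $d$, which is a formal check. Chaining the displayed inequalities then delivers the bound.
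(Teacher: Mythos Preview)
Your proof is correct and follows essentially the same approach as the paper: pass to the localization at $d$, invoke Lemma~\ref{lemma.LCIgens} to count generators, and apply Krull's height theorem in the $(n^2+1)$-dimensional ring $\C[t,\mathbf{z},d^{-1}]$. The only cosmetic difference is that the paper observes $\fX_{\mx_t,h}\cap(G\times\Spec\,\C[t])$ is \emph{dense} open (a consequence of saturation) and thereby gets an equality $\dim\fX_{\mx_t,h}=\dim\cx$ at the localization step, whereas you use the weaker inequality ``localization does not increase dimension'' together with an explicit nonemptiness check; either route suffices for the corollary.
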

\begin{proof}  Observe that $\cx := \fX_{\mx_t, h} \cap \left( G \times \Spec\ \!\C[t] \right)$ is a dense open subset of $\fX_{\mx_t, h}$, so $ \dim \cx = \dim \fX_{\mx_t, h}$. On the other hand, $\cx =  \Spec \ \! \C[t,\mathbf{z}, d^{-1}] / \cm_t$ where $\cm_t:=\iota(\ci_{\mx_t, h})R[\mathbf{z}, d^{-1}]$. 
By Lemma~\ref{lemma.LCIgens}, $\cm_t$ is generated by $\sum_{i=1}^n (n-h(i))$ polynomials. Thus
\begin{align*}
\dim \fX_{\mx_t, h}&=\dim \cx 
= \dim G + 1 -\dim \cm_t\\ 
&\geq  n^2 +1 - \left(n^2- \sum_{i=1}^n h(i)\right)= 1+\sum_{i=1}^n h(i),
\end{align*}
as claimed.
\end{proof}

As an immediate corollary to the proof, we obtain the following.

\begin{cor} \label{cor.CM} If $\dim \, \fX_{\mx_t, h} = 1 + \sum_{i=1}^n h(i)$, the scheme 
$$
\mathcal X:= \fX_{\mx_t, h} \cap \left( G \times \Spec\ \!\C[t] \right)
$$ is a local complete intersection and therefore Cohen-Macaulay. 
\end{cor}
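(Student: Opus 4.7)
The plan is to observe that under the dimension hypothesis, the number of generators of the defining ideal equals the codimension of $\mathcal{X}$ in the ambient smooth scheme $G \times \Spec\ \!\C[t]$. This forces $\mathcal{X}$ to be a local complete intersection, and local complete intersections in regular schemes are automatically Cohen--Macaulay.

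First, since $\mathcal{X}$ is a dense open subscheme of $\fX_{\mx_t, h}$, the hypothesis gives $\dim \mathcal{X} = 1 + \sum_{i=1}^n h(i)$. The ambient scheme $G \times \Spec\ \!\C[t]$ is smooth of dimension $n^2 + 1$, so the codimension of $\mathcal{X}$ in $G \times \Spec\ \!\C[t]$ is
\[
n^2 + 1 - \left(1 + \sum_{i=1}^n h(i)\right) = n^2 - \sum_{i=1}^n h(i).
\]

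Next, I would invoke Lemma~\ref{lemma.LCIgens} to see that the ideal $\cm_t = \iota(\ci_{\mx_t, h}) R[\mathbf{z}, d^{-1}]$ defining $\mathcal{X}$ is generated by exactly $n^2 - \sum_{i=1}^n h(i)$ polynomials. Thus the number of defining equations matches the codimension on the nose. By the definition of a local complete intersection (see, e.g., Eisenbud, Corollary 18.14), this means $\mathcal{X}$ is cut out locally by a regular sequence in the smooth scheme $G \times \Spec\ \!\C[t]$, so $\mathcal{X}$ is a local complete intersection.

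Finally, any local complete intersection in a regular scheme is Cohen--Macaulay: locally, the coordinate ring is a quotient of a regular local ring by a regular sequence, and regular local rings are Cohen--Macaulay, as are quotients by regular sequences. This completes the argument. The only step requiring any nontrivial input is the matching of generator count and codimension, which is packaged by Lemma~\ref{lemma.LCIgens}; the remainder is standard commutative algebra. No part of the argument is a genuine obstacle.
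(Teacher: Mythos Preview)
Your proposal is correct and follows essentially the same argument as the paper: compute the codimension of $\mathcal{X}$ in $G\times \Spec\,\C[t]$ from the dimension hypothesis, match it to the generator count from Lemma~\ref{lemma.LCIgens}, and conclude local complete intersection hence Cohen--Macaulay. The only differences are cosmetic (you cite Eisenbud rather than Hartshorne and spell out the LCI $\Rightarrow$ CM step in slightly more detail).
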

\begin{proof} Observe that $\cx$ is a closed subscheme of $G\times \Spec\ \!\C[t]$.
Let $\cm_t:= \iota(\ci_{\mx_t, h})R[\mathbf{z}, d^{-1}]$.
Since $\mathcal X = \Spec 
\ \C[t,\mathbf{z}, d^{-1}] / \cm_t$ is a dense open subset of $\fX_{\mx_t,h}$, its codimension in $G\times \Spec\ \!\C[t]$ is 
$$
n^2+1-\left(1+\sum_i h(i)\right) = n^2-\sum_i h(i).
$$
On the other hand, $\cm_t$ is generated by  $n^2-\sum_i h(i)$ polynomials, and thus $\mathcal X$ is a local complete intersection. Finally, every local complete intersection is Cohen--Macaulay~\cite[Prop.~8.23]{Hartshorne}.
\end{proof}

We apply miracle flatness (Theorem~\ref{thm.miracle}) to 
give us one possible criterion for proving Conjecture~\ref{conj}. The following is a restatement of Proposition~\ref{prop.miracle}.  

\begin{prop}\label{prop.flat.criterion2}
Given $\mx\in \mathfrak{gl}_n(\C)$ and a Hessenberg function $h:[n]\to[n]$, let $\varphi_{\mx, h}: \fX_{\mx_t, h}\to \Spec\ \!\C[t]$ be the morphism defined as in~\eqref{eqn.morphism}.  Suppose that for each $a\in \C$, the matrix Hessenberg scheme $\fX_{\mx_a,h}$ is irreducible and has dimension $\sum_{i=1}^n h(i)$.  Then $\varphi_{\mx, h}$ is flat.
\end{prop}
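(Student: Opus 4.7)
The strategy is to apply miracle flatness (Theorem~\ref{thm.miracle}) to the restriction of $\varphi_{\mx, h}$ to the open subscheme $\cx := \fX_{\mx_t, h} \cap (G \times \Spec\ \!\C[t])$, where Cohen--Macaulayness is available via Corollary~\ref{cor.CM}, and then to transfer flatness to the full scheme $\fX_{\mx_t, h}$ using the characterization of flatness over a principal ideal domain as torsion-freeness (Lemma~\ref{lemma.torsion}).

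The main technical step is to establish that $\fX_{\mx_t, h}$ is irreducible of dimension $1 + \sum_{i=1}^n h(i)$. The lower bound on dimension is Corollary~\ref{cor.dim}. For the upper bound, an irreducible component $W$ either dominates $\Spec\ \!\C[t]$, in which case the fiber dimension theorem and the hypothesis $\dim \fX_{\mx_a, h} = \sum_i h(i)$ give $\dim W \leq 1 + \sum_i h(i)$, or it lies in a single fiber $\fX_{\mx_{a_0}, h}$ and has $\dim W \leq \sum_i h(i)$. To prove irreducibility, I would pass to the closed subscheme $\cy \subseteq G/B \times \Spec\ \!\C[t]$ defined by the Hessenberg conditions for $\mx_t$, so that $\pi^{-1}(\cy) = \cx$ via the $B$-bundle $\pi \times \id$. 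Since $G/B$ is projective, the morphism $\cy \to \Spec\ \!\C[t]$ is proper, so every dominant component of $\cy$ surjects onto $\Spec\ \!\C[t]$. Upper semi-continuity of fiber dimension combined with irreducibility of each $Y_{\mx_a, h}$ (equivalent to that of $\fX_{\mx_a, h}$) then forces every dominant top-dimensional component of $\cy$ to contain every fiber; two such distinct components would have intersection of dimension $1 + \sum_i h(i)$, contradicting that each is irreducible and properly contains the intersection. A non-dominant component $W' \subseteq \cy_{a_0}$ would be forced into the unique dominant component and hence equal it, again a contradiction. Thus $\cy$ is irreducible, and so are $\cx$ and its closure $\fX_{\mx_t, h}$.

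With irreducibility and dimension in hand, Corollary~\ref{cor.CM} yields that $\cx$ is a local complete intersection, and in particular Cohen--Macaulay. Its fibers over closed points of $\Spec\ \!\C[t]$ are the dense opens $\fX_{\mx_a, h} \cap G$ of the irreducible $\fX_{\mx_a, h}$, each of dimension $\sum_i h(i) = \dim\cx - 1$. Since $\Spec\ \!\C[t]$ is regular, Theorem~\ref{thm.miracle} applies and the restricted morphism $\cx \to \Spec\ \!\C[t]$ is flat.

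To finish, I would extend flatness to all of $\fX_{\mx_t, h}$. The definition~\eqref{eq.definitionItilde} of $\tilde{\ci}_{\mx_t, h}$ as a saturation yields an injection of $\C[t]$-modules
\[
\C[t, \mathbf{z}]/\tilde{\ci}_{\mx_t, h} \hookrightarrow \C[t, \mathbf{z}, d^{-1}]/\iota(\ci_{\mx_t, h})\C[t, \mathbf{z}, d^{-1}],
\]
whose target is the coordinate ring of $\cx$, flat (and therefore torsion-free) over the PID $\C[t]$ by the previous step. The source is then torsion-free as a submodule, and Lemma~\ref{lemma.torsion} promotes this to flatness, as desired. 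The main obstacle is the irreducibility argument of the second paragraph: the fiber hypotheses alone do not preclude pathological ``vertical'' components of $\fX_{\mx_t, h}$ supported over single closed points, and ruling them out seems to require the proper base change available on $G/B \times \Spec\ \!\C[t]$ to ensure that a dominant top-dimensional component actually surjects onto $\Spec\ \!\C[t]$.
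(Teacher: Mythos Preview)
Your proposal is correct and follows essentially the same route as the paper: restrict to $\cx = \fX_{\mx_t,h}\cap(G\times\Spec\C[t])$, use the dimension bounds of Corollaries~\ref{cor.dim} and~\ref{cor.CM} to get Cohen--Macaulayness, apply miracle flatness there, and then transfer flatness to $\fX_{\mx_t,h}$ via the saturation injection and Lemma~\ref{lemma.torsion}. The only notable difference is your treatment of irreducibility of the total space: the paper dispatches this in one line by citing a standard result (Shafarevich, \S1.6.3, Theorem~1.26) that a morphism with irreducible base and irreducible equidimensional fibers has irreducible total space, together with the fiber-dimension bound $\dim\fX_{\mx_t,h}\le 1+\dim\fX_{\mx_a,h}$. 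Your more hands-on argument via properness of $\cy\subseteq G/B\times\Spec\C[t]$ and upper semicontinuity reaches the same conclusion and is fine, though the phrasing ``intersection of dimension $1+\sum_i h(i)$'' is slightly off---what you really mean is that any two dominant top-dimensional components each contain every fiber, hence coincide set-theoretically. In short, the obstacle you flag is not a genuine gap; it is handled by a standard reference, and your direct argument also works.
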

\begin{proof} 
By Lemma~\ref{lemma.fibers} the matrix Hessenberg scheme $\fX_{\mx_a, h}$ is precisely the scheme-theoretic fiber of $\varphi_{\mx, h}$ over $\left< t-a \right>\in \Spec\ \!\C[t]$. Since all fibers are irreducible and $\Spec\ \!\C[t]$ is irreducible, it follows immediately that the scheme $\fX_{\mx_t, h}$ is irreducible (see~\cite[1.\S6.3, Thm~1.26]{Shafarevich}).  Furthermore, its dimension is bounded above: 
\[
\dim \, \fX_{\mx_t} \leq \dim\, \fX_{\mx_a, h} +\dim \Spec\ \!\C[t]  = 1+\sum_{i=1}^n h(i);
\]
see \cite[Thm.~10.10]{Eisenbud} or~\cite[1.\S6.3, Thm~1.25]{Shafarevich}.
Combining this with the lower bound of Corollary~\ref{cor.dim} implies that equality must hold, and thus 
\[
\mathcal X := \fX_{\mx_t, h} \cap \left( G \times \Spec\ \!\C[t] \right) = \Spec\ \!\C[\mathbf{z}, t, d^{-1}]/ \cm_t,
\] 
where $\cm_t:=\iota(\ci_{\mx_t, h})R[\mathbf{z}, d^{-1}]$
is irreducible and Cohen--Macaulay by Corollary~\ref{cor.CM}.
We consider the morphism $\overline{\varphi}$ obtained by restricting $\varphi_{\mx, h}$ to $\mathcal X$,
\[
\bar{\varphi}:\mathcal X \to \Spec\ \!\C[t].
\]
The scheme theoretic fiber of $\bar{\varphi}$ over $\left<  t-a \right>\in \Spec\ \!\C[t]$ is
$$
\fX_{\mx_a, h} \cap G  = \Spec\ \!\C[\mathbf{z}, d^{-1}] /\cm_a.
$$  Thus, the assumption that $\dim \fX_{\mx_a ,h} = \sum_{i=1}^n h(i)$ implies the dimensions of all fibers of $\bar{\varphi}$ are equal and satisfy
\begin{eqnarray}\label{eqn.dim}
\dim \cx=
\dim \Spec\ \!\C[t]+ \dim \Spec\ \!\C[\mathbf{z}, d^{-1}] / \cm_a.
\end{eqnarray}
Theorem~\ref{thm.miracle} now implies $\bar{\varphi}$  is flat.

Finally, we show that 
$\bar{\varphi}$ flat implies that $\varphi_{\mx, h}$ is  flat.  Recall that $\tilde{\ci}_{\mx_t, h}$ is the saturation of $\ci_{\mx_t,h}$ with respect to the determinant function $d$ as in~\eqref{eq.definitionItilde}.  
By Lemma~\ref{lemma.torsion} it suffices to show that if $f\in \C[t, \mathbf{z}]$ such that $(t-a)f\in \tilde{\ci}_{\mx_t, h}$ for some $a\in \C$, then $f\in \tilde{\ci}_{\mx_t, h}$.   Indeed, 
\begin{eqnarray*}
(t-a)f\in \tilde{\ci}_{\mx_t, h} \Rightarrow (t-a)\iota(f) \in \iota (\tilde{\ci}_{\mx_t, h}) = \iota (\ci_{\mx_t, h}) \Rightarrow \iota(f)\in \iota (\ci_{\mx_t, h}) \Rightarrow f\in \tilde{\ci}_{\mx_t, h}
\end{eqnarray*}
where the second implication follows from Lemma~\ref{lemma.torsion} since $\bar{\varphi}$ is flat.  
\end{proof}

We now apply Proposition~\ref{prop.flat.criterion2} to conclude $\varphi_{\mx, h}$ is flat in the two cases introduced at the beginning of this section.

\begin{prop} Suppose $\mx\in \mathfrak{gl}_n(\C)$ and $h: [n]\to [n]$ is a Hessenberg function.  Assume further that either
\begin{enumerate}
\item $\mx$ is an element of the regular sheet $\fg_{reg}$ and $h(i)>i$ or that
\item $h = (n-1, n, \ldots, n)$.
\end{enumerate}
Then $\varphi_{\mx, h}$ is flat.
\end{prop}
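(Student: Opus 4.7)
The plan is to verify, in each case, the hypotheses of Proposition~\ref{prop.flat.criterion2}: namely that every scheme-theoretic fiber $\fX_{\mx_a, h}$ of $\varphi_{\mx, h}$ is irreducible and has dimension $\sum_{i=1}^n h(i)$.

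For case (1), the key observation is that every matrix on the sheet line is regular. By Lemma~\ref{lemma.conjugate}, $\mx_a$ is conjugate to $a\mx \in \fg_{reg}$ for $a \neq 0$, while $\mx_0 = \nilp_\mx$ lies in $\fg_{reg}$ by construction. I would then invoke the results of Abe--Fujita--Zeng~\cite{Abe-Fujita-Zeng2020} and the second author~\cite{Precup2018}, which assert that for $\mx$ regular and $h$ indecomposable the Hessenberg variety $Y_{\mx, h} \subseteq G/B$ is irreducible of dimension $\sum_{i=1}^n (h(i) - i)$. Combined with Remark~\ref{rem.quotient} and the fact that $\pi: G \to G/B$ is a principal $B$-bundle with irreducible fiber of dimension $n(n+1)/2$, this yields that $\fX_{\mx_a, h}$ is irreducible of dimension $\sum_{i}(h(i)-i) + n(n+1)/2 = \sum_{i}h(i)$ for every $a \in \C$, as required.

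For case (2), I would first dispose of the scalar subcase: when $\mx = cI_n$, we have $\nilp_\mx = 0$, so $\mx_t = tcI_n$ for all $t$. The rank conditions~\eqref{eqn.rank.cond} with $h = (n-1, n, \ldots, n)$ are then trivially satisfied by every flag, giving $\ci_{\mx_t, h} = \langle 0 \rangle$ and a tautologically flat family. When $\mx$ is non-scalar, I claim $\mx_a$ remains non-scalar for all $a \in \C$: for $a \neq 0$ this follows from Lemma~\ref{lemma.conjugate}, and at $a = 0$ the matrix $\nilp_\mx$ is a nonzero nilpotent, hence non-scalar. I would then cite the analysis of $Y_{\mx, h}$ for $h=(n-1,n,\dots,n)$ carried out in~\cite{EPS-online}, which establishes that this Hessenberg variety is irreducible of dimension $\binom{n}{2} - 1$ whenever $\mx$ is non-scalar. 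Arguing as in case (1), this yields irreducibility of each $\fX_{\mx_a, h}$ with dimension $\binom{n}{2} - 1 + n(n+1)/2 = n^2 - 1 = \sum_{i} h(i)$.

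The main obstacle is ensuring the cited irreducibility and dimension results apply \emph{uniformly} to every matrix $\mx_a$ along the sheet line $\mx_t$, rather than only to a single representative $\mx$. In case (1) this is essentially immediate because all points of the sheet line lie in $\fg_{reg}$ by Lemma~\ref{lemma.conjugate} and the sheet-preserving construction of Definition~\ref{def.sheetline}. In case (2), the subtle point is verifying that the curve $\mx_t$ passes through scalar matrices only when $\mx$ itself is scalar; this reduces to the short observation that a sum of a semisimple matrix and a nonzero nilpotent cannot be a scalar multiple of the identity (since a scalar matrix has vanishing nilpotent part in its Jordan decomposition). Once these pointwise facts are secured, Proposition~\ref{prop.flat.criterion2} delivers flatness of $\varphi_{\mx, h}$ in both cases.
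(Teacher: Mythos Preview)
Your treatment of case~(1) matches the paper's approach and is correct.

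In case~(2), however, there is a genuine gap. Your claim that \cite{EPS-online} establishes irreducibility of $Y_{\mx,h}$ for $h=(n-1,n,\ldots,n)$ whenever $\mx$ is non-scalar is false: the result there (see \cite[Thm.~4]{EPS-online}) requires $\mx \notin \fg_{(1^n)}$ \emph{and} $\mx \notin \fg_{(2,1^{n-2})}$. When $\mx$ lies in the minimal sheet, $Y_{\mx,h}$ is typically reducible. Indeed, for $h=(n-1,n,\ldots,n)$ the corner set is $\cc(h)=\{1,2\}$, and Theorem~\ref{thm.assprimes} shows that $\fX_{\semi_t,h}$ has two distinct associated primes $\cpt_{1,h}=\ck_{n-1}$ and $\cpt_{2,h}=\cjt_1$; Example~\ref{ex.2444.2} (the case $n=4$) exhibits this explicitly. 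Consequently the hypothesis of Proposition~\ref{prop.flat.criterion2} fails along the entire sheet line when $\mx\in\fg_{(2,1^{n-2})}$, and your argument does not go through.

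The paper resolves this by carving out the minimal sheet as a separate subcase and invoking Theorem~\ref{thm.flat}, the main theorem of the paper, rather than Proposition~\ref{prop.flat.criterion2}. This is not a cosmetic difference: miracle flatness genuinely does not apply over the minimal sheet (cf.~Remark~\ref{rem.notequidim}), so some independent argument is required there. Your dichotomy ``scalar versus non-scalar'' must therefore be refined to ``scalar, minimal sheet, or otherwise,'' with the middle case handled by appeal to Theorem~\ref{thm.flat}.
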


\begin{proof} It suffices to argue that every Hessenberg variety $Y_{\mx, h}\subseteq GL_n(\C)/B$ in the cases above is irreducible of dimension $\sum_{i=1}^n (h(i)-i)$.  Indeed, given that information, the dimension of the matrix Hessenberg scheme $\fX_{\mx, h}$ is 
\[
\dim B + \sum_{i=1}^n(h(i)-i) = \frac{n(n+1)}{2} +\sum_{i=1}^n(h(i)-i)  = \sum_{i=1}^n h(i),
\]
and the fact that $\fX_{\mx, h}$ is the closure of $\pi^{-1}(Y_{\mx, h})$ in $\Spec\ \!\C[\mathbf{z}]$ implies $\fX_{\mx, h}$ is also irreducible.

If $\mx$ is a regular matrix and $h$ is indecomposable then the fact that $Y_{\mx, h}$ is irreducible and of the required dimension was proved by the second author in~\cite{Precup2018}. (The fact that $Y_{\mx, h}$ is irreducible when $\mx$ is regular nilpotent or regular semisimple appeared much earlier in the literature; see~\cite{Anderson-Tymoczko2010, DPS1992}.)

Next, we assume $h=(n-1, n,\ldots, n)$. If $\mx\in \fg_{(1^n)}$ then $\mx$ is either the zero matrix or a scalar multiple of the identity.  It's obvious that $\tilde{\ci}_{\mx, h} = \ci_{\mx, h}=\left< 0\right>$ in this case and $\varphi_{\mx, h}$ is a trivial vector bundle, and therefore flat.  Next, if $\mx\in \fg_{(2,1^{n-2})}$ then $\varphi_{\mx, h}$ is flat by the main theorem of this text, Theorem~\ref{thm.flat}.  We may therefore assume $\mx \notin \fg_{(1^n)}$ and $\mx\notin \fg_{(2,1^{n-2})}$.  The fact that $h=(n-1, 1, \ldots, 1)$ implies $\ci_{\mx, h}$ is principal for all $\mx\in \mathfrak{gl}_n(\C)$ and it follows immediately that $\dim Y_{\mx, h} = n^2-1 = \sum_{i=1}^n (h(i)-i)$. Finally, $Y_{\mx, h}$ is irreducible whenever $\mx \notin \fg_{(1^n)}$ and $\mx\notin \fg_{(2,1^{n-2})}$ by~\cite[Thm.~4]{EPS-online}.
\end{proof}

\newcommand{\etalchar}[1]{$^{#1}$}

\end{document}